\begin{document}
\def\RA{{\bf\text{To be read again}}}
\def\ERA{{\bf\text{End To be read again}}}
\def\Diff{\text{Diff}}
\def\Max{\text{max}}
\def\Log{\text{log}}
\def\loc{\text{loc}}
\def\inta{\text{int }}
\def\det{\text{det}}
\def\exp{\text{exp}}
\def\Re{\text{Re}}
\def\lip{\text{Lip}}
\def\leb{\text{Leb}}
\def\dom{\text{Dom}}
\def\diam{\text{diam}\:}
\newcommand{\ovfork}{{\overline{\pitchfork}}}
\newcommand{\ovforki}{{\overline{\pitchfork}_{I}}}
\newcommand{\Tfork}{{\cap\!\!\!\!^\mathrm{T}}}
\newcommand{\whforki}{{\widehat{\pitchfork}_{I}}}
\theoremstyle{plain}
\newtheorem{theo}{\bf Theorem}[section]
\newtheorem{lemm}[theo]{\bf Lemma}
\newtheorem{prop}[theo]{\bf Proposition}%[section]
\newtheorem{coro}[theo]{\bf Corollary}%[section]
\newtheorem{Property}[theo]{\bf Property}%[section]

\theoremstyle{remark}
\newtheorem{rema}[theo]{\bf Remark}
\newtheorem{exem}[theo]{\bf Example}
\newtheorem{Examples}[theo]{\bf Examples}
\newtheorem{defi}[theo]{\bf Definition}
\newtheorem{ques}[theo]{\bf Question}

\title{Persistence of laminations}

\author{Pierre Berger\\
Institute for Mathematical Sciences,\\
 Stony Brook University,\\
  Stony Brook NY 11794-3660,\\
   USA\\
\texttt{berger@phare.normalesup.org}}
\date{\today}

\maketitle
\begin{abstract}
We present a modern proof of some extensions of the celebrated Hirsch-Pugh-Shub theorem on persistence of normally hyperbolic compact laminations. Our extensions consist of allowing the dynamics to be an endomorphism, of considering the complex analytic case and of allowing the laminations to be non compact. To study the analytic case, we use the formalism of deformations of complex structures. We present various persistent complex laminations which appear in dynamics of several complex variables: H\'enon maps, fibered holomorphic maps...

In order to proof the persistence theorems, we construct a laminar structure on the stable and unstable of the normally hyperbolic laminations.\end{abstract}
{\footnotesize
 \tableofcontents}

\section*{Introduction}
In 1977, M. Hirsch, C. Pugh and M. Shub \cite{HPS} developed a theory which has been very useful for hyperbolic dynamical systems.
The central point of their work was to prove the $C^r$-persistence of manifolds, foliations, or more generally laminations which are $r$-normally hyperbolic and plaque-expansive, for all $r\ge 1$.

  We recall that a lamination is \emph{$r$-normally hyperbolic}, if the dynamics preserves the lamination (each leaf is sent into a leaf) and if the normal space of the leaves splits into two $Tf$-invariant subspaces, that $Tf$ contracts (or expands) $r$-times more sharply than the tangent space to the leaves. Hence, $0$-normal expansion (resp. contraction) means that the action of the dynamics on the normal bundle is expanding (resp. contracting), and so does not require the existence of a dominated splitting. \emph{Plaque-expansiveness} is a generalization of expansiveness to the concept of laminations. The \emph{$C^r$-persistence} of such a lamination means that for any $C^r$-perturbation of the dynamics, there exists a lamination, $C^r$-close to the first, which is  preserved by the new dynamics, and such that the dynamics induced on the space of the leaves remains the same.

    A direct application of this theory was the construction of an example of a robustly transitive  {diffeomorphism (every close diffeomorphism has a dense orbit) but not Anosov}.
Then their work was used for example by C. Robinson \cite{Rs} to achieve the proof of the Palis-Smale conjecture:   the $C^1$-diffeomorphisms that satisfy Axiom A and the strong transversality condition are structurally stable.

Nowadays, this theory remains very useful in several mathematical fields such  as generic dynamical systems, differentiable dynamics, foliations theory or Lie group theory.   

A first result is an analogue of the HPS theorem to the endomorphism case:

We allow $f$ to be an endomorphism that is to be possibly non-bijective and with singularities, but we suppose $f$ to be normally expanding instead of normally hyperbolic:

\begin{theo}\label{preth1}
Let $r\ge 1$ and let $(L,\mathcal L)$ be a compact lamination $C^r$-immersed by $i$ into a manifold $M$. Let $f$ be a $C^r$-endomorphism of $M$ which preserves and $r$-normally expands $\mathcal L$.  Then  the immersed lamination is $C^r$-persistent. 

 If moreover $i$ is an embedding and $f$ is forward plaque-expansive at $\mathcal (L,\mathcal L)$ then the embedded lamination is $C^r$-persistent.\end{theo}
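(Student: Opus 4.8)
The plan is to reduce persistence to a fixed point problem for a \emph{forward} graph transform on sections of a laminated normal bundle --- as in Hirsch--Pugh--Shub --- but keeping the leaf dynamics rigidly fixed, so that the non-invertibility and the singularities of $f$ do no harm, and then to recover the regularity by a fiber contraction argument. Since $(L,\mathcal L)$ is compact and $C^r$-immersed by $i$, there is a laminated normal bundle $\pi:N\to L$ together with an immersion $\hat\imath$ of a neighborhood $\mathcal N$ of its zero section $0_N\cong L$ into $M$ extending $i$. Since $f$ preserves $\mathcal L$ it induces a leaf map $\hat f:L\to L$ with $i\circ\hat f=f\circ i$, and its normal derivative descends to a bundle map $Nf:N\to N$ over $\hat f$; that $f$ $r$-normally expands $\mathcal L$ means, in an adapted metric (a standard telescoping construction using compactness), that $Nf$ is fiberwise invertible with $\|(Nf)^{-1}\|\le\lambda<1$ and, for $1\le k\le r$, $\lambda\,\|T\hat f\|^{k}<1$. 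For $f'$ that is $C^r$-close to $f$, the map $f'\circ\hat\imath$ sends a smaller neighborhood $\mathcal N_0$ into $\hat\imath(\mathcal N)$, and composing with the local inverses of $\hat\imath$ along plaque boxes \emph{over} $\hat f$ one obtains a $C^r$ bundle map $F':\mathcal N_0\to\mathcal N$ covering $\hat f$, fiberwise expanding (inverse norm $\le\lambda'<1$), with $\hat\imath\circ F'=f'\circ\hat\imath$ and $F'(0_x)=0_{\hat f(x)}$ exactly when $f'=f$. A section $s:L\to\mathcal N$, continuous and $C^r$ along the leaves, solving $F'\circ s=s\circ\hat f$ then provides the perturbed immersed lamination $i':=\hat\imath\circ s$, for which $f'\circ i'=i'\circ\hat f$.

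For the fixed point, write $F'(v)=A(\pi v)\,v+\varphi_{\pi v}(v)$ with $\|A(x)^{-1}\|\le\lambda'$ and $\varphi$ of small $C^0$-norm and small fiberwise Lipschitz constant; then the invariance equation is equivalent to $s=\Gamma s$, where $(\Gamma s)(x):=A(x)^{-1}\bigl(s(\hat f(x))-\varphi_x(s(x))\bigr)$. The crucial feature of the endomorphism case is that $\Gamma$ only ever evaluates $s$ at the image points $\hat f(x)$ and never inverts $\hat f$: normal expansion lets us solve forward. Hence $\Gamma$ is a genuine self-contraction of the ball of radius $\delta$ in the Banach space of continuous sections, with a unique fixed point $s^{\ast}$ that depends continuously on $f'$ and equals $0$ for $f'=f$; in particular $i'$ is $C^0$-close to $i$. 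Differentiating the equation $k$ times along the leaves, the transform picks up a factor dominated by $\lambda'\,\|T\hat f\|^{k}<1$ for $k\le r$, so the fiber contraction theorem gives $s^{\ast}\in C^r$ along the leaves with continuous transverse jets; as immersivity is an open condition, $i'$ is a $C^r$-immersion of $(L,\mathcal L)$ preserved by $f'$ with unchanged leaf dynamics $\hat f$. This settles the first assertion.

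For the embedded, forward plaque-expansive case, a $C^0$-small perturbation of an embedding of the compact space $L$ is again an embedding, so $i'$ is an embedding. It remains to upgrade ``the leaf dynamics is $\hat f$'' to genuine persistence of the \emph{embedded} lamination: that $i'(L)$ is, up to a homeomorphism of $L$ close to the identity, independent of the auxiliary choices, and that any $f'$-invariant lamination $C^0$-near $i(L)$ has leaf dynamics conjugate to $\hat f$. This is where forward plaque-expansiveness enters, via shadowing: the $f'$-orbits of such a lamination remain in a small neighborhood of $i(L)$ and are therefore tracked by forward plaque-chains of $(L,\mathcal L)$; forward plaque-expansiveness forces such a plaque-chain to be essentially unique, and the resulting assignment is a homeomorphism $h$ of $L$, close to the identity, conjugating the two leaf dynamics. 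Applied to the lamination constructed above this yields the stated persistence.

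I expect the main difficulty to be twofold: the consistent construction of $F'$ in the immersed, possibly singular setting --- where $\hat\imath$ is not injective and $Tf$ may be degenerate, so $F'$ must be pinned down through the given leaf map $\hat f$ rather than by inverting $\hat\imath$ --- and the $C^r$-regularity of $s^{\ast}$ through the fiber contraction theorem, which is exactly where the $r$-normal expansion hypothesis must be used in full strength. In the embedded case the additional obstacle is the one-sided (forward) plaque-chain shadowing that produces the conjugating homeomorphism.
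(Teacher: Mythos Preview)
Your overall architecture---a fixed point for a graph transform on sections of a laminated normal bundle, $C^0$-contraction, then $C^r$ regularity via a jet/fiber-contraction argument---is the paper's strategy. Two steps, however, do not go through as written.

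\textbf{The leaf dynamics cannot be held fixed.} You ask for a map $F':\mathcal N_0\to\mathcal N$ that is simultaneously (i) a bundle map covering $\hat f$, i.e.\ sending $N_x$ into $N_{\hat f(x)}$, and (ii) a lift of $f'$, i.e.\ $\hat\imath\circ F'=f'\circ\hat\imath$. For a generic perturbation $f'$ these are incompatible: lifting $f'$ through local inverses of $\hat\imath$ lands the fibre $N_x$ in $N_y$ for some $y$ in the \emph{plaque} of $\hat f(x)$, not at $\hat f(x)$ itself. Hence your invariance equation $F'\circ s=s\circ\hat f$, which amounts to $f'\circ i'=i'\circ\hat f$, is overdetermined and typically has no solution. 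A clean obstruction: with $M=S^1\times\mathbb R$, $L=S^1\times\{0\}$, $f(x,y)=(x+\alpha,2y)$ and $f'(x,y)=(x+\alpha',2y)$, the unique $f'$-invariant graph near $L$ is again $S^1\times\{0\}$ with pullback $x\mapsto x+\alpha'$; no $i'$ close to $i$ intertwines this with the rotation by $\alpha$ when $\alpha'\neq\alpha$. The paper's graph transform $S^0$ fixes precisely this: one requires only that $f'(i'(x))$ lie in the image of the \emph{plaque} $i'(\mathcal L_{f^*(x)}^{\eta})$, so $S^0(f',i')(x)$ is the unique point of the transversal $F_x^{\eta}$ in $f'^{-1}\bigl(i'(\mathcal L_{f^*(x)}^{\eta})\bigr)$. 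The pullback $f'^*$ is read off a posteriori and is merely equivalent and $C^r$-close to $f^*$, not equal to it. Your contraction and jet estimates survive unchanged once you iterate this plaque-wise operator rather than a fibre-preserving one; this is also why the $C^r$ step in the paper is carried out on the Grassmannian of $d$-planes of $T\mathcal F$ (pulled back by $T\hat f'$) rather than by differentiating a bundle-linear equation.

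\textbf{Injectivity in the embedded case is not automatic.} Your claim that a $C^0$-small perturbation of an embedding of a compact $L$ is again an embedding fails for laminations: the tubular immersion $\hat\imath$ need not be injective on any uniform neighbourhood of the zero section (e.g.\ leaves $\{0\}\times S^1$ and $\{1/n\}\times S^1$ in $\mathbb R\times S^1$ have overlapping normal segments for every fixed radius), so $i'=\hat\imath\circ s$ may identify distinct transversal points for arbitrarily small $\|s\|$. This is exactly what forward plaque-expansiveness is used for in the paper: if $i'(x)=i'(y)$ then, by the commuting diagram, the full forward $f'^*$-orbits of $x$ and $y$ have coinciding $i'$-images and hence stay uniformly close; being $\epsilon$-pseudo-orbits of $f^*$ respecting plaques, plaque-expansiveness forces $x$ and $y$ into a common small plaque, on which the immersion is already injective. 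So plaque-expansiveness secures the injectivity of $i'$, not only the uniqueness/shadowing statement you reserve it for.
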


 A second result is a generalization of the HPS theorem:

\begin{theo}\label{preth2}
Let $r\ge 1$ and let $(L,\mathcal L)$ be a compact lamination $C^r$-immersed by $i$ into a manifold $M$. Let $f$ be a $C^r$-endomorphism of $M$ which preserves and is $r$-normally hyperbolic to $\mathcal L$ with a bijective pullback $f^*$.  Then  the immersed lamination is $C^r$-persistent.

 If moreover $i$ is an embedding and $f^*$ is plaque-expansive at $\mathcal (L,\mathcal L)$ then the embedded lamination is $C^r$-persistent.\end{theo}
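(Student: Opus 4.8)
The plan is to run the Hirsch--Pugh--Shub graph--transform scheme, substituting the pullback operation $f^*$ for the inverse of $f$ (which need not exist since $f$ is only an endomorphism). First I would use the immersion $i$ and the $r$-normally hyperbolic splitting $i^*TM = TL\oplus N^s\oplus N^u$ to fix a tubular--type neighborhood of $i(L)$, identifying nearby points with small vectors of $N=N^s\oplus N^u$ over a leafwise fibered chart of $(L,\mathcal L)$, the zero section being $\mathcal L$ itself. An immersed lamination $C^r$-close to $i$ and compatible with $\mathcal L$ then corresponds to a section $\sigma$ of $N$ which is continuous on $L$, of class $C^r$ along the leaves, and small in the $C^0$ norm with a uniform leafwise-$C^1$ bound; these form a complete metric space $\mathcal S$ for the $C^0$ distance (the leafwise-$C^1$ bound being a closed condition by Arzel\`a--Ascoli).

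Next I would associate to every $C^r$-perturbation $f'$ of $f$ a graph transform $\Gamma_{f'}=(\Gamma^s_{f'},\Gamma^u_{f'})$ on $\mathcal S$. On the stable factor one pushes forward: since $f'$ contracts $N^s$ more sharply than $TL$, the forward graph transform is well defined and contracting there (this is the normal--expansion mechanism of Theorem~\ref{preth1} applied in backward time, or directly the stable part). On the unstable factor $f'$ expands, so one must pull back, and this is exactly where the bijectivity of $f^*$ is used: $f^*$ induces a bijection of the plaque/leaf space of $\mathcal L$, this bijectivity persists under $C^r$-perturbation, and it lets one solve locally the leafwise equation ``$f'(\mathrm{graph}(\Gamma^u_{f'}\sigma))$ covers $\mathrm{graph}(\sigma)$'', the fiberwise inversion of the expanding map on $N^u$ being a contraction. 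Combining the two, $r$-normal hyperbolicity yields that $\Gamma_{f'}$ preserves a small leafwise-$C^1$-bounded ball of $\mathcal S$ and is a $C^0$-contraction (the cross terms coming from the perturbed leaf dynamics being controllably small). Its unique fixed point $\sigma_{f'}$ has a graph which is an $f'$-invariant immersed lamination, $C^r$-close to $i(\mathcal L)$, carrying the same leaf space and the same induced leaf dynamics; the fixed section is $C^r$ along the leaves, and depends continuously on $f'$, by the usual fiber--contraction / $C^r$-section bootstrap, using $r$-normal hyperbolicity to control successive leafwise derivatives. This settles the immersed case.

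If $i$ is an embedding and $f'$ is $C^0$-close to $f$, the graph of the small section $\sigma_{f'}$ is again embedded, so one already has an $f'$-invariant embedded lamination $C^r$-close to the original; what remains is to check it is \emph{the} right one, i.e.\ that the homeomorphism carrying $\mathcal L$ onto it respects the leaf structure and conjugates the leaf dynamics. This is where plaque--expansiveness of $f^*$ enters: following HPS, it gives uniqueness of the shadowing of plaque--pseudo--orbits, whence the candidate constructed leafwise does not depend on the auxiliary choices and the leaf-space identification is a homeomorphism, $C^r$ along leaves. Transcribing this shadowing/uniqueness argument from $f$ to its pullback $f^*$ --- where two-sided, not merely forward, plaque--expansiveness is needed because the normal behaviour is hyperbolic rather than merely expanding --- gives the $C^r$-persistence of the embedded lamination.

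The main obstacle I expect is the honest construction and control of $\Gamma_{f'}$ in the non-invertible setting: showing that $f^*$ is well defined on the relevant space of laminations, that its bijectivity is an open condition so that $f'^*$ still makes sense, and --- most delicately --- that the backward (unstable) part built from $(f^*)^{-1}$ glues with the forward (stable) part into a single operator that is a genuine $C^0$-contraction while preserving the leafwise $C^r$ estimates. This is precisely the price of passing from diffeomorphisms to endomorphisms, and the place where the hypothesis of a bijective pullback is not a technicality but the crux; the plaque--expansiveness part is then a careful adaptation of the HPS uniqueness argument to $f^*$.
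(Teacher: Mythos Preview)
Your outline follows a direct graph-transform route that differs from the paper's architecture in one essential way. The paper does \emph{not} build a single operator on sections of $N^s\oplus N^u$. Instead (Proposition~\ref{trans}) it first enlarges $(L,\mathcal L)$ to two auxiliary laminations: a center--unstable lamination $(L^u,\mathcal L^u)$ whose leaves are the $\mathcal L$-leaves thickened in the $E^u$ direction, hence $r$-normally \emph{contracted} (the only remaining normal direction is $E^s$), and a center--stable lamination $(L^s,\mathcal L^s)$ thickened in $E^s$, hence $r$-normally \emph{expanded}. It then applies the already--proved Theorem~\ref{Maincontract} and Theorem~\ref{Main} (normally expanded case) to these two laminations separately, obtaining $i^u(f')$ and $i^s(f')$, and recovers $i(f')$ pointwise as the transverse intersection $i^s(f')(\mathcal L^{s\epsilon}_x)\ovfork i^u(f')(\mathcal L^{u\epsilon}_x)$, read off through the fibers of a tubular neighborhood by a holonomy argument. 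The embedded statement then follows from Corollary~\ref{plaque-exp}, which is your plaque-expansiveness/uniqueness step.

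This decoupling is precisely what buys the paper its way past the obstacle you yourself flag. Each reduced problem has one-sided normal behaviour, so each graph transform runs in a single direction and is unambiguously a contraction; the paper never has to glue a forward piece with a backward piece into one operator on sections of $N$. Note also that you have the role of bijectivity reversed. In the direct scheme the \emph{stable} component $(\Gamma^s\sigma)(y)$ is obtained by pushing the graph forward under $f'$ and therefore reads $\sigma$ at $(f^*)^{-1}(y)$; it is this component that needs $f^*$ bijective. The unstable component $(\Gamma^u\sigma)(x)$ is defined by the transverse intersection of $F^\eta_x$ with $f'^{-1}$ of the graph near $f^*(x)$, and uses only $f^*(x)$. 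In the paper this surfaces as follows: Theorem~\ref{Maincontract}, applied to the normally contracted $\mathcal L^u$, is the theorem requiring an injective open pullback, and the bijectivity of $f^*$ on $\mathcal L$ is used exactly to make $f^*_u$ injective on $\mathcal L^u$; the normally expanded side needs no such hypothesis. Your direct approach is not unworkable in principle, but the step you identified---making the mixed-direction operator a genuine $C^0$-contraction while keeping leafwise $C^r$ control---is real work you have not carried out, and the paper's intersection architecture sidesteps it entirely.
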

We recall that a \emph{pullback} of an endomorphism $f$ preserving a lamination $(L,\mathcal L)$ immersed by $i$ is an endomorphism $f^*$ of $(L,\mathcal L)$ such that:
\[i\circ f^*= f\circ i.\]

In the complex analytic world, we prove the following theorem:

\begin{theo}\label{defo-exp} Let $(L,\mathcal L)$ be a complex lamination immersed by i into a complex manifold $M$. Let $f$ be a holomorphic endomorphism of $M$ which preserves and $0$-normally expands $\mathcal L$.

Then for any complex $k$-ball $B$, and any complex analytic family $(f_t)_{t\in B}$ of endomorphisms of $M$, such that $f_{0}$ is $f$, there exist an open neighborhood $B_0$ of $0\in B$ and a complex analytic family of laminations $(L, \mathcal L_t)_{t\in B_0}$ immersed by $(i_t)_t$ such that:
\begin{itemize}
\item[-] $f_0=f$, $i_0=i$ and $\mathcal L_0=\mathcal L$,  
\item[-] $f_t$ preserves the lamination $(L,\mathcal L_t)$ holomorphically immersed by $i_t$ into $M$, for any $t\in B_0$.\end{itemize}

If moreover $i$ is an embedding and $f$ is forward plaque-expansive, then $i_t$ is an embedding for every $t\in B_0$.
\end{theo}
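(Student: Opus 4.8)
The plan is to derive Theorem~\ref{defo-exp} from Theorem~\ref{preth1} by a complexification trick, treating the parameter ball $B$ as part of an enlarged phase space. Consider the product manifold $\widehat M := M\times B$ and the family map $\widehat f : M\times B\to M\times B$, $(x,t)\mapsto (f_t(x),t)$, which is holomorphic and fibers over the identity on $B$. The lamination $(L,\mathcal L)$ sits inside the slice $M\times\{0\}$, and I would thicken it to a lamination $(\widehat L,\widehat{\mathcal L}) := (L\times B, \mathcal L\times B)$ of $\widehat M$, immersed by $\widehat i(x,t) = (i(x),t)$, whose leaves are the products (leaf of $\mathcal L$)$\times B$. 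The key point is that $\widehat f$ preserves $\widehat{\mathcal L}$ — because $f_t$ sends $\mathcal L$-plaques into $\mathcal L$-plaques only at $t=0$, this is not literally true, so instead I would work with $\mathcal L\times\{0\}$ as the reference and use the openness built into Theorem~\ref{preth1}: the hypotheses of $r$-normal expansion are open, and since $\widehat f$ restricted to the central slice coincides with $f$, the thickened lamination $L\times\{0\}$ is $0$-normally expanded by $\widehat f$ in the fiber direction $M$, while the $B$-directions are exactly neutral. One must check that neutrality of the $B$-directions is compatible with normal expansion: the normal bundle of $\widehat{\mathcal L}$ splits as (normal bundle of $\mathcal L$ in $M$)$\oplus TB$, and $T\widehat f$ acts as the identity on $TB$ and expandingly on the first factor, so $\widehat{\mathcal L}$ — now taken as the genuinely $\widehat f$-invariant lamination $\bigcup_{t} (\text{leaves of } \mathcal L_t)\times\{t\}$ which we are trying to construct — is what the abstract theorem produces.

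Concretely, I would instead apply the \emph{persistence} conclusion of Theorem~\ref{preth1} rather than re-running its proof: fix a large ambient compact lamination structure, observe that $\widehat f$ is a $C^r$ (indeed holomorphic, but here we only need $C^r$ for the topology) perturbation — along the $B$-directions — of the degenerate map $(x,t)\mapsto (f(x),t)$ which \emph{does} preserve and $0$-normally expand $\mathcal L\times B$ (each leaf $\mathcal L\times\{t\}$ is sent to itself setwise via $f$). Then Theorem~\ref{preth1} applied in $\widehat M$ gives, for $\widehat f$ close enough to this degenerate model, a persistent lamination $C^r$-close to $\mathcal L\times B$ and preserved by $\widehat f$; since $\widehat f$ fibers over $\mathrm{id}_B$ and the persistent lamination is unique, it must also fiber over $B$, so it decomposes as $\bigcup_{t\in B_0}(L,\mathcal L_t)\times\{t\}$ for laminations $\mathcal L_t$ on $L$, immersed by maps $i_t$ with $i_0 = i$, $\mathcal L_0 = \mathcal L$. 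The $\widehat f$-invariance of the total lamination says exactly that $f_t$ preserves $(L,\mathcal L_t)$ immersed by $i_t$, which is the desired conclusion. Holomorphic dependence on $t$ — i.e. that $(L,\mathcal L_t)$ and $i_t$ form a complex analytic family — is where the hypothesis that $(f_t)$ is a complex analytic family enters: since $\widehat f$ is holomorphic on $\widehat M = M\times B$ and the fixed-point/graph-transform construction underlying Theorem~\ref{preth1} produces the invariant object as the unique fixed point of a contraction on a space of sections, that fixed point depends holomorphically on any holomorphic parameter; restricting to the $B$-slices and using that the $B$-coordinate is itself the parameter, the leaves vary holomorphically in $t$.

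The main obstacle will be making the "fibered over $B$" reduction rigorous, in particular the \emph{uniqueness} of the persistent lamination needed to conclude that it respects the product structure over $B$. Theorem~\ref{preth1} as stated gives existence of a persistent immersed lamination but uniqueness only holds up to the equivalence built into plaque-expansiveness (for the embedded case) or not at all a priori in the immersed case; so I would either: (i) invoke the internal structure of the proof of Theorem~\ref{preth1}, where the lamination is obtained as a graph over the original one in a tubular neighborhood and is therefore canonically associated to the perturbation, hence automatically $\widehat f$-equivariant and fibered since $\widehat f$ is; or (ii) symmetrize: since the family $(\widehat f_s)_{s\in B}$ defined by $\widehat f_s(x,t) = (f_{t}(x), t)$ — note the parameter $s$ is redundant here — one checks directly that translating $t\mapsto t+s$ conjugates solutions, forcing the $B$-equivariance. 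I expect (i) to be the clean route and would phrase the argument in terms of the graph transform already set up earlier in the paper. The remaining points — shrinking $B$ to $B_0$ so that $\widehat f$ stays within the perturbation size allowed by Theorem~\ref{preth1}, and transporting forward plaque-expansiveness from $f$ to $\widehat f$ (trivial, as the $B$-directions are neutral and add no new plaque-structure, so $\widehat f$ is forward plaque-expansive on $\mathcal L\times B$ iff $f$ is on $\mathcal L$) to get that each $i_t$ is an embedding — are routine.
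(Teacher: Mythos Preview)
Your product-space setup $\widehat M = M\times B$, $\widehat f(x,t)=(f_t(x),t)$, thickened lamination $\mathcal L\times B$, is exactly the geometric frame the paper uses. But there is a genuine gap in your plan: you propose to invoke Theorem~\ref{preth1} (or its proof) as a black box, and that theorem requires $r$-normal expansion with $r\ge 1$, whereas Theorem~\ref{defo-exp} assumes only \emph{$0$-normal} expansion. Passing to the product does not help: the $B$-directions lie in $T\widehat{\mathcal L}$, so the normal bundle of $\widehat{\mathcal L}$ is still just the normal bundle of $\mathcal L$ in $M$, and the domination inequality you would need for $1$-normal expansion of $\widehat{\mathcal L}$ is precisely the $1$-normal expansion of $\mathcal L$ that you do not have. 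So Theorem~\ref{preth1} is simply unavailable, and the $C^1$-part of its proof (the Grassmannian contraction in Section~\ref{r=1}) genuinely fails without domination.

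The paper's proof turns this obstruction into the main idea. Instead of applying a $C^r$ persistence result and arguing holomorphy afterwards, it runs the graph transform directly on the space $Z$ of sections $\sigma$ of the tubular neighborhood over $B_0\times L'$ whose associated immersions $i_\sigma$ send plaques of $B_0\times\mathcal L$ onto \emph{complex} submanifolds of $B_0\times M$. Two facts make this work under only $0$-normal expansion: first, $Z$ is complete for the $C^0$-norm, by the Cauchy integral formula (so $C^0$-limits of holomorphic sections stay holomorphic, and one never needs separate $C^1$-control); second, the graph transform $\hat S$ built from the operator $S^0$ of Lemma~\ref{lem6} preserves $Z$, because the image of a plaque under $\hat S(i_\sigma)$ is an open piece of the preimage by the holomorphic map $\hat f$ of a complex submanifold, at a point of transversality (which $0$-normal expansion guarantees), hence is itself complex. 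The $C^0$-contraction of $\hat S$ only uses $0$-normal expansion (Subsection~\ref{C0contraction}), so one gets a unique fixed point $\underline i\in Z$, and holomorphy of the resulting family is built in from the start; the complex lamination structure $\mathcal D$ on $B_0\times L$ then comes from Proposition~\ref{J2}. Your post-hoc sentence ``the fixed point depends holomorphically on any holomorphic parameter'' is exactly the step that needs this mechanism; as stated it is not justified, since the contraction in the $C^r$ proof is on a space of $C^r$ sections, not holomorphic ones.
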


A \emph{complex  analytic family of endomorphisms} $(f_t)_{t\in B}$ of $M$ means that $(t,x)\in B\times M\mapsto (t,f_t(x))$ is complex analytic. 

By complex analytic family of laminations $(L,\mathcal L_t)_{t\in B_0}$ immersed by $(i_t)_t$, we mean that there exists a complex lamination structure $\mathcal D$ on $D:= B\times L$ such that:\begin{itemize}
\item[-] $\overline{w}\;: \; (x,t)\in (D,\mathcal D)\mapsto t\in B_0$ is a complex analytic submersion and  $\overline{w}^{-1}(t)= (L,\mathcal L_t)$,
\item[-] $(x,t)\in (D,\mathcal D)\mapsto (i_t(x),t)\in M\times B$ is a complex analytic immersion.
\end{itemize}

\begin{rema} In Example \ref{hopf}, we show that the complex structure $\mathcal D$ on $L\times B_0$ is not necessarily the trivial product structure $\mathcal L\times B_0$. Nevertheless, we give a sufficient condition to have such a product structure (see Proposition \ref{J3}).\end{rema}

\begin{rema} The above theorem remains true if $L$ is not necessarily compact but a pullback $f^*$ of $f$ sends $cl(L)$ into $L$.\end{rema} 

The mirror result of the generalization of HPS's theorem is the following:
\begin{theo}\label{defo-hyp} Let $(L,\mathcal L)$ be a compact complex lamination immersed by $i$ into a complex manifold $M$.
Let $f$ be a holomorphic endomorphism of $M$ which preserves, has bijective pull back $f^*$ and is $1$-normally hyperbolic to $(L,\mathcal L)$. Then for any complex $k$-ball $B$ and complex analytic family $(f_t)_{t\in B}$ of endomorphisms of $M$ such that $f_{0}$ is $f$, there exist an open neighborhood $B_0$ of $0$ and a complex analytic family of laminations $(L, \mathcal L_t)_{t\in B_0}$ immersed by $(i_t)_t$ such that:
\begin{itemize}
\item[-] $f_0=f$, $i_0=i$ and $\mathcal L_0=\mathcal L$,  
\item[-] $f_t$ preserves the lamination $(L,\mathcal L_t)$ holomorphically immersed by $i_t$ into $M$, for any $t\in B_0$.\end{itemize}

If moreover $i$ is an embedding and $f^*$ is plaque-expansive, then $i_t$ is an embedding for every $t\in B_0$.
\end{theo}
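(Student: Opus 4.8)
The plan is to run a single graph transform for $(L,\mathcal L)$ itself, carried simultaneously over the parameter ball $B$, and then to upgrade the construction to the holomorphic category exactly as in the proof of Theorem~\ref{defo-exp}; the bijectivity of the pullback $f^{*}$ plays the role of the invertibility one needs to treat the expanded normal direction. Since $f$ is $1$-normally hyperbolic to $(L,\mathcal L)$, the normal bundle $N$ of $i$ splits $f^{*}$-equivariantly as $N=N^{s}\oplus N^{u}$ over $L$, with $Tf^{*}$ contracting $N^{s}$ and expanding $N^{u}$, each more sharply than the tangential action on $T\mathcal L$. Fixing a $C^{\infty}$ tubular identification of an $\varepsilon$-neighbourhood of $i(L)$ in $M$ with a neighbourhood of the zero section of $N$, a candidate perturbed leaf is encoded by a leafwise-$C^{1}$ section $\sigma\colon L\to N$ through the immersion $i_{\sigma}=\exp_{i}\circ\,\sigma$; let $\mathcal S_{\varepsilon}$ be the set of such $\sigma$ with $\|\sigma\|_{C^{1}}\le\varepsilon$.

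For $t$ near $0$ I would define a graph transform $\Gamma_{t}$ on $\mathcal S_{\varepsilon}$ by requiring that the graph of $\Gamma_{t}(\sigma)$ over a plaque $P$ be $f_{t}$ of the graph of $\sigma$ over the plaque $(f^{*})^{-1}(P)$, re-expressed as a graph through the (holomorphic) normal projection. This is consistent for $t=0$, $\sigma=0$ because $f$ preserves $(L,\mathcal L)$, and the decisive estimate is that $1$-normal hyperbolicity makes $\Gamma_{t}$ a uniform contraction of $\mathcal S_{\varepsilon}$ in the leafwise-$C^{1}$ topology: its Lipschitz constant is bounded by $\max\bigl(\|Tf|_{N^{s}}\|\,m(Tf|_{T\mathcal L})^{-1},\ \|Tf|_{T\mathcal L}\|\,m(Tf|_{N^{u}})^{-1}\bigr)<1$, uniformly for $t$ close to $0$. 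The laminar structures on the stable and unstable sets of $(L,\mathcal L)$ constructed earlier in the paper --- the strong-stable and unstable laminations, with holomorphic leaves --- are the bookkeeping that makes ``graph over a plaque, pushed by $f_{t}$'' well defined and records how the leaf-space parametrization is dragged by the perturbation; in the merely immersed case one works on the abstract $L$, and when $i$ is an embedding with $f^{*}$ plaque-expansive this dragged parametrization is the honest pullback $f_{t}^{*}$ and is uniquely determined. The fixed point $\sigma_{t}$ yields $i_{t}:=i_{\sigma_{t}}$, a $C^{1}$-immersion $C^{1}$-close to $i$, with $i_{0}=i$, $\mathcal L_{0}=\mathcal L$, and with $f_{t}$ preserving the immersed lamination $(L,\mathcal L_{t})$.

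The gain of holomorphy, where the formalism of deformations of complex structures enters, proceeds as for Theorem~\ref{defo-exp}. Since $f_{t}$ is holomorphic and the normal projections defining $\Gamma_{t}$ are holomorphic, $\Gamma_{t}$ preserves the subset of $\mathcal S_{\varepsilon}$ of leafwise-holomorphic sections, and this subset is closed for the $C^{0}$ distance; hence $\sigma_{t}$ is leafwise holomorphic, so $(L,\mathcal L_{t})$ carries a complex lamination structure holomorphically immersed by $i_{t}$. To obtain the complex analytic family --- the structure $\mathcal D$ on $D=B\times L$ making $\overline w\colon(x,t)\mapsto t$ a holomorphic submersion with fibres $(L,\mathcal L_{t})$ and $(x,t)\mapsto(i_{t}(x),t)$ a holomorphic immersion --- I would assemble the $\Gamma_{t}$ into one graph transform $\widehat\Gamma$ acting on leafwise-holomorphic $C^{1}$ sections over $B\times L$: it depends holomorphically on $t$ because $(t,x)\mapsto f_{t}(x)$ does, it is a uniform contraction by the estimate above, so its fixed point $(\sigma_{t})_{t}$ depends holomorphically on $t$ by the holomorphic parametrized fixed-point theorem; pushing forward the product complex structure of $B\times N$ through $\exp\circ(\sigma_{t})_{t}$ defines $\mathcal D$. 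Consistently with Example~\ref{hopf}, $\mathcal D$ need not be the product $\mathcal L\times B_{0}$, which is precisely why one lets the leaf-space parametrization move with $t$.

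For the embedded case, plaque-expansiveness of $f^{*}$ passes to $f_{t}^{*}$ at $(L,\mathcal L_{t})$ for small $t$, by the usual robustness argument. Were $i_{t}$ non-injective for some small $t$, two distinct points $x\neq y$ of $L$ with $i_{t}(x)=i_{t}(y)$ would satisfy $i_{t}((f_{t}^{*})^{n}x)=i_{t}((f_{t}^{*})^{n}y)$ for every $n\in\mathbb Z$, using that $f_{t}^{*}$ is again a bijection; since $i_{t}$ is $C^{0}$-close to the embedding $i$ this forces $(f_{t}^{*})^{n}x$ and $(f_{t}^{*})^{n}y$ to lie in a common plaque for every $n$, where $i_{t}$ is injective, so plaque-expansiveness of $f_{t}^{*}$ gives $x=y$, a contradiction. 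Hence $i_{t}$ is an embedding for every $t\in B_{0}$. The two points I expect to resist most are the uniform $C^{1}$-contraction of $\Gamma_{t}$ over all of $B$ with the leaf-space reparametrization carried coherently --- the core of the plaque-expansive $\mathrm{HPS}$ machinery in the endomorphism setting --- and the verification that $\widehat\Gamma$ genuinely stabilises the space of leafwise-holomorphic sections in the lamination rather than single-manifold setting, so that the deformation-of-complex-structures step closes and yields $\mathcal D$.
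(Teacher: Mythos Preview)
Your single forward graph transform $\Gamma_t$ is not a contraction, and this is a genuine gap rather than a detail. If the graph of $\Gamma_t(\sigma)$ over a plaque $P$ is obtained by pushing forward by $f_t$ the graph of $\sigma$ over $(f^{*})^{-1}(P)$, then in the $N^{u}$-component the section is \emph{expanded}: the relevant Lipschitz constant there is of order $\|Tf|_{N^{u}}\|\cdot m(Tf|_{T\mathcal L})^{-1}>1$, not the quantity $\|Tf|_{T\mathcal L}\|\cdot m(Tf|_{N^{u}})^{-1}<1$ you wrote. The estimate you quote would be correct for an operator that runs the $N^{u}$-part \emph{backwards} (via $f_t^{-1}$ or a preimage construction) while running the $N^{s}$-part forwards, but that is not the operator you defined, and making such a mixed operator well-posed on sections of the full normal bundle is exactly the difficulty the paper avoids. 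There is also a secondary issue: you assert that ``the normal projections defining $\Gamma_t$ are holomorphic,'' but a holomorphic tubular neighbourhood of $(L,\mathcal L)$ need not exist (cf.\ the remark following Proposition~\ref{exist:tubu}), so the preservation of leafwise-holomorphic sections by $\Gamma_t$ is not available for free.

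The paper proceeds quite differently. Via Proposition~\ref{trans} it first builds the auxiliary laminations $(L^{s},\mathcal L^{s})$ and $(L^{u},\mathcal L^{u})$, whose leaves are those of $\mathcal L$ thickened in the stable (resp.\ unstable) direction. The point is that $\mathcal L^{s}$ is $0$-normally \emph{expanded} (its only normal direction is $E^{u}$) and $\mathcal L^{u}$ is $0$-normally \emph{contracted} (its only normal direction is $E^{s}$), so each falls under a case where a single one-sided graph transform genuinely contracts. A cone argument shows that $Ti^{s}(T\mathcal L^{s})$ and $Ti^{u}(T\mathcal L^{u})$ are $J$-invariant, so by Proposition~\ref{J2} both carry complex lamination structures for which $i^{s}$, $i^{u}$ are holomorphic. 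One then applies Theorem~\ref{defo-exp} to $\mathcal L^{s}$ and its contracting analogue to $\mathcal L^{u}$, obtaining analytic families $i^{s}_{t}$, $i^{u}_{t}$; finally $(L,\mathcal L_t)$ is recovered as the transverse intersection $i^{s}_{t}(\mathcal L^{s}_{x})\ovfork i^{u}_{t}(\mathcal L^{u}_{x})$, exactly as in the differentiable proof. In short, the stable/unstable laminations are not ``bookkeeping'' for a global graph transform on $\mathcal L$: they are the device that replaces the ill-behaved two-sided problem by two well-behaved one-sided problems, and the holomorphy is obtained a posteriori through $J$-invariance rather than by assuming holomorphic normal projections.
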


\begin{rema} Under the hypothesis of the above theorem, we have the same conclusion if $f$ is $0$-normally \emph{contracting} instead of $1$-normally hyperbolic.\end{rema}

We will provide some extensions of all the above results in the \emph{non-compact cases} (see Theorems \ref{Main} and \ref{Maincontract}).\\ 

We notice that in all the above theorems, the hypotheses are open.

Let us finish this introduction by giving two examples.
\begin{exem} 
Let $R_1$ and $R_2$ be two rational functions of the Riemannian sphere $\mathbb S^2$. Let $K$ be a compact subset of $\mathbb S^2$ expanded by $R_1$: $R_1$ sends $K$ into itself and there exists $\lambda>1$ such that for a Riemannian metric on $\mathbb S^2$, 
for every $x\in K$ $|d_xR_1|\ge \lambda$.
We suppose that for every $z\in \mathbb S^2$, $|d_z R_2|<\lambda$.

Let $f:\; (z,z')\in \mathbb S^2\times \mathbb S^2\rightarrow (R_1(z), R_2(z'))$. Let $\mathcal L$ be the lamination on $K\times \mathbb S^2$ whose leaves are of the form $\mathcal L_k:=\{k\}\times \mathbb S^2$, for $k\in K$. We notice that this compact lamination is preserved by $f$ and $1$-normally expanded by $f$. By Theorem \ref{preth1}, this lamination is $C^1$-persistent. In other words, for any $C^1$-perturbation $f'$ of $f$, there exists a family of  manifolds $(\mathcal L'_k)_{k\in K}$ such that for any $k\in K$:
\begin{itemize}
\item[-] $\mathcal L'_k$ is $C^1$-close to $\mathcal L_k$: there exists a $C^1$-embedding of $\mathcal L_k$ onto $\mathcal L'_k$ which is $C^1$-close to the canonical inclusion,  
\item[-] the endomorphism $f'$ sends $\mathcal L'_k$ into $\mathcal L'_{R_1(k)}$, 
\item[-] for $k'$ close to $k$ the sphere $\mathcal L'_{k'}$ is $C^1$-close to $\mathcal L_k$.  
\end{itemize}

Moreover, since $f$ is plaque-expansive at $\mathcal L$, the submanifolds $(\mathcal L'_k)_k$ are disjoint from each other. This implies that there exists a $C^1$-embedding, close to the canonical inclusion, of $\mathcal L$ onto the laminations $\mathcal L'$ formed by the leaves $(\mathcal L'_k)_k$.
\end{exem}

\begin{exem}
Let $M:= \mathbb S^1\times \mathbb S^1\times \mathbb R^2$ be the product of the 2-torus with the real plane. Let $\alpha\in \mathbb S^1$.
\[\mathrm{Let} \; f:\;  M:= \mathbb S^1\times \mathbb S^1\times \mathbb R^2\rightarrow M\]
\[(\theta,\phi, x,y)\mapsto (2\theta,\phi+\alpha, 0, 10 y).\]
We notice that $f$ is $4$-normally hyperbolic at the torus $\mathbb T^2:=\mathbb S^1\times \mathbb S^1\times \{0\}$. However it is not injective restricted to this torus, and so we cannot apply Theorem \ref{preth2}. 

That is why we consider the Smale's solenoid $\tilde{\mathbb S}$ which projects onto $\mathbb S^1$ by sending the points of a same stable manifold to a same point of $\mathbb S^1$. Let $\pi$ be this projection.
We endow $\tilde {\mathbb  T}^2:=\tilde{\mathbb S}\times \mathbb S^1$ with the 2-dimensional lamination structure whose leaves are the product of the unstable manifolds with the circle $\mathbb S^1$. 
We notice that the map $i:\; (x,\phi) \in \tilde{\mathbb S}\times \mathbb S^1\mapsto (\pi(x),\phi, 0,0)\in M$ is an immersion of this lamination onto the torus  $\mathbb T^2$. 
 Also the dynamics $f_{|\mathbb T^2}$ lifts to the product $\tilde f$  of the usual dynamics of the solenoid with the rotation of angle $\alpha$. In other words the following diagram commutes:
\[\begin{array}{rcccl}
     &              &          f   &                             &\\
     & \mathbb T^2\subset  M& \rightarrow                &  \mathbb T^2\subset  M&\\
    i& \uparrow        &                                &\uparrow   &i\\
     &\tilde {\mathbb T}^2& \rightarrow                    &\tilde {\mathbb T}^2& \\
     &                                  &  \tilde f                       &                                   &\\  
\end{array}\]
Theorem \ref{preth2} implies that the immersed lamination $\tilde{\mathbb T}^2$ is $C^4$-persistent. 
In particular, for any $C^4$-perturbation $f'$ of $f$, there exists an immersion $i'$ $C^4$-close to $i$ such that $f'$ sends each leaf of $\tilde{\mathbb T}^2$ immersed by $i'$ to the immersion by $i'$ of the image by $\tilde f$ of this leaf.

Actually one can show that the torus $\mathbb T^2$ is not persistent.

\end{exem}

I thank J.-C. Yoccoz who suggested that I study the persistence of complex laminations. I am also grateful to C. Bonatti wonder me about the persistence of normally hyperbolic laminations by endomorphisms, by looking at the preorbits spaces. I thank also D. Varolin, J. Milnor, M. Lyubich, J. Kahn and S. Bonnot for many discussions and valuable suggestions.

\section{Geometry of lamination}
\subsection{Definitions}Throughout this chapter, $r$ refers to a fixed positive integer or to ${\mathcal H}$. The vector space $\mathbb K$ refers to the real line $\mathbb R$ when $r$ is an integer or to $\mathbb C$ when $r$ is ${\mathcal H}$.  For our purpose it is convenient to denote by $C^\mathcal H$ the class of holomorphic maps.
 
Let us consider  a locally compact and second-countable metric space $L$ covered by open sets $(U_i)_i$, called
 \emph{distinguished open sets}, endowed with homeomorphisms $h_i$ from $U_i$ onto $V_i\times T_i$, where $V_i$ is an open set of $\mathbb K^d$ and $T_i$ is a metric space.
 
 We say that the \emph{charts} $(U_i, h_i)_i$ define a $C^r$-\emph{atlas} of a lamination structure on $L$ of dimension $d$ if the \emph{coordinate change} $h_{ij}=h_j\circ h_i^{-1}$ can be written in the form
 \[h_{ij}(x,t)=(\phi_{ij}(x,t),\psi_{ij}(x,t)),\]
 where $\phi_{ij}$ takes its values in $\mathbb K^d$, $\psi_{ij}(\cdot,t)$ is locally constant for any $t$ and: \begin{itemize}
 \item[-] if $r$ is an integer then the partial derivatives $(\partial_x^s \phi_{ij})_{s=1}^r$ exist and are continuous on the domain of $\phi_{ij}$,
 \item[-]  if $r$ is ${\mathcal H}$ then $(\phi_{ij}(\cdot ,t))_t$ is a continuous family of complex analytic maps. 
\end{itemize} 
% Two $C^r$-atlases are said to be \emph{equivalent} if their union is a $C^r$-atlas.
 
 A $(C^r)$-\emph{lamination} is a metric space $L$ endowed with a maximal $C^r$-atlas $\mathcal{L} $.

 A \emph{ plaque } is a subset of $L$ which can be written in the form $h_i^{-1}(V_i^0\times\{t\})$, for a  chart $h_i$, a point $t\in T_i$, and a connected component $V_i^0$ of $V_i$. A plaque that contains a point $x\in L$ will be denoted by  $\mathcal{L}_x$; the union of the plaques containing $x$ and of diameter less than $\epsilon>0$ will be denoted by $\mathcal L_x^\epsilon$. As the diameter is given by the metric of $L$, the set $\mathcal L_x^\epsilon$ is -- in general -- not homeomorphic to a manifold. The \emph{leaves } of $\mathcal L$ are the smallest subsets of $L$ which contain any plaque that intersects them.

% If moreover it is a locally compact subset, this subset is \emph{$\mathcal L$-admissible}. Then, the charts $(U_i,\phi_i)$ of $\mathcal L$ restricted to $U_i\cap P$ define a lamination structure on $P$. We will call this structure \emph{the restriction of $\mathcal L$ to $P$} and we denote this structure by $\mathcal{L}_{|P}$.
 
 If $V$ is an open subset of $L$, the set of the charts $(U,\phi)\in\mathcal{L}$ such that $U$ is included in $V$, forms a lamination structure on $V$, which is denoted by $\mathcal{L}_{|V}$. A subset $P$ of $L$ is \emph{saturated} if it is a union of leaves. An \emph{admissible}\label{def adm} subset  $A$ of $L$ is a locally compact and saturated subset of the restriction of the lamination to some open subset. 
% A subset $P$ of $L$ which is $\mathcal{L}_{|V}$-admissible for a certain open subset $V$ of $L$
%will be called \emph{ $\mathcal{L}$-locally admissible}, and we will denote by $\mathcal{L}_{|P}$ its lamination structure $\mathcal{L}_{|V_{|P}}$.

%We recall that the locally compact subsets of a locally compact metric space are the intersections of open and closed subsets.

\begin{Examples}
\begin{itemize}
    \item[-] A manifold of dimension $d$ is a lamination of the same dimension.
    \item[-] A $C^r$-foliation on a connected manifold induces a $C^r$-lamination structure.
    \item[-] A locally compact and second-countable metric space defines a lamination of dimension zero.
    \item[-] The Smale solenoid attractor is endowed with a structure of laminations whose leaves are the stable manifolds.
%    If $K$ is a locally compact subset of $\mathbb S^1$, then the manifold structure of the circle $\mathbb S^1$ induces on $\mathbb S^1\times K$ a $C^\infty$-lamination structure whose leaves are $\mathbb S^1\times \{k\}$, for $k\in K$.
    \item[-] Let $M$ be the cylinder $\mathbb S^1\times \mathbb R$. 
    Let $\pi$ be the canonical projection of $\mathbb R$ onto $\mathbb S^1\cong \mathbb R/\mathbb Z$. 
    \[\mathrm{Let}\; L:= \Big\{({\theta},y)\in M: \; y= \arctan(\overline{\theta}), \; \pi(\overline{\theta})=\theta\Big\}\cup \mathbb S^1\times \{-\pi/2,\pi/2\}.\]  
  The compact space $L$ is canonically endowed with a $1$-dimensional lamination structure which consists of the leaves $\mathbb S^1\times \{-\pi/2\}$,  $\mathbb S^1\times \{\pi/2\}$, and a last one which spirals down to these two circles.
    \item[-] The stable foliation of an Anosov $C^r$-diffeomorphism defines a $C^r$-lamination structure whose leaves are the stable manifolds. \end{itemize}
\end{Examples}
\begin{Property} If $(L, \mathcal{L})$ and $(L',\mathcal{L}')$ are two laminations, then $L\times L'$ is endowed with the lamination structure whose leaves are the product of the leaves of $(L, \mathcal{L})$ with the leaves of  $(L',\mathcal{L}')$. We denote this structure by $\mathcal{L}\times\mathcal{L}'$.\end{Property}

\subsection{Morphisms of laminations }
A \textit{$C^r$-morphism (of laminations)} from $(L,\mathcal{L})$ to $(L',\mathcal{L}')$ is a continuous map $f$ from $L$ to $L'$ such that, seen via charts $h$ and $h'$, it can be written in the form:
\[h'\circ f\circ h^{-1} (x,t)= (\phi (x,t),\psi(x,t)),\]
 where $\phi$ takes its values in $\mathbb K^{d'}$, $\psi(\cdot , t)$ is locally constant and: \begin{itemize}
 \item[-] if $r$ an integer, then  $\partial_x^s \phi$ exists and is continuous on the domain of $\phi$, for all $s\in\{1,\dots ,r\}$,
 \item[-] if $r$ is ${\mathcal H}$, then $(\phi(\cdot,t))_t$ is a continuous family of complex analytic maps.\end{itemize} 
 
If moreover the linear map $\partial_x \phi(x,t)$ is always one-to-one (resp. onto), we will say that $f$ is an \emph{immersion (of laminations)} (resp. \emph{submersion}).

An \emph{ isomorphism (of laminations)} is a bijective morphism of laminations whose inverse is also a morphism of laminations.

 An \emph{embedding (of laminations)} is an immersion which is a homeomorphism onto its image.
 
 An \emph{endomorphism of $(L,\mathcal{L})$} is a morphism from $(L,\mathcal{L})$ into itself.

 We denote by:
 \begin{itemize}
\item[-] $Mor^r(\mathcal{L},\mathcal{L}')$ the set of the $C^r$-morphisms from $\mathcal{L}$ into $\mathcal{L}'$,
\item[-] $Im^r(\mathcal{L},\mathcal{L}')$ the set of the $C^r$-immersions from $\mathcal{L}$ into $\mathcal{L}'$,
%\item[-] $Su^r(\mathcal{L},\mathcal{L}')$ the set of the $C^r$-submersions from $\mathcal{L}$ into $\mathcal{L}'$,
%\item[-] $Iso^r(\mathcal{L},\mathcal{L}')$ the set of the $C^r$-isomorphisms from $\mathcal{L}$ onto $\mathcal{L}'$,
\item[-] $Emb^r(\mathcal{L},\mathcal{L}')$ the set of the $C^r$-embeddings from $\mathcal{L}$ into $\mathcal{L}'$,
\item[-] $End^r(\mathcal{L})$ the set of the $C^r$-endomorphisms of $\mathcal L$.\end{itemize}

  We denote by $T \mathcal{L}$ the vector bundle over $L$, whose fiber at $x\in L$, denoted by $T_x \mathcal{L}$, is the tangent space at $x$ to its leaf. The differential $Tf$ of a morphism $f$ from $\mathcal L$ into $\mathcal L'$ is the bundle morphism from $T\mathcal L$ into $T\mathcal L'$ over $f$, induced by the differential of $f$ along the leaves of $\mathcal L$.  
  %If $f$ is morphism from $\mathcal{L}$ into $\mathcal{L}'$, we denote by $Tf$ the bundle morphism from
  %$T\mathcal{L}$ to $T \mathcal{L}'$ over $f$ induced by the differential of $f$ along the leaves of $\mathcal L$.
\begin{rema} If $M$ is a manifold, we notice that $End^r(M)$ denotes the set of $C^r$-maps from $M$ into itself, possibly non-bijective and possibly with singularities.\end{rema}

\begin{exem} \label{lamhyper}
Let $f$ be a $C^r$-diffeomorphism of a manifold $M$. Let $K$ be a hyperbolic compact subset of $M$. Then, the union $W^s(K)$ of the stable manifolds of points in $K$ is a $C^r$-lamination $(L,\mathcal L)$ immersed injectively.

Moreover, if every stable manifold does not accumulate on $K$, then $(L,\mathcal L)$ is a $C^r$-embedded lamination.
 \end{exem}

\begin{proof} See \cite{PB1} Example 1.1.5.\end{proof}

\subsection{Riemannian metric on a lamination}
A \emph{Riemannian metric} $g$ on a $C^r$-lamination $(L,\mathcal L)$ is an inner product $g_x$ on each fiber $T_x\mathcal L$ of $T\mathcal L$, which depends continuously on the base point $x$.
It follows from the local compactness and the second-countability of $L$ that any lamination $(L,\mathcal{L})$ can be endowed with some Riemannian metric.
% \footnote{As the tangent bundle is only continuous when $r=1$, we cannot define the geodesic flow along the leaves. Nevertheless, there exists a $C^\infty$-lamination structure, compatible with the $C^1$-structure. For such a structure, we can define the geodesic flow for another regular metric. To show it, we can adapt the proof of Theorem 2.9 in \cite{H} with the analysis techniques of appendix \ref{partlam}.
%}

A Riemannian metric induces -- in a standard way -- a metric on each leaf.
For two points $x$ and $y$ in a same leaf, the distance between $x$ and $y$ is defined by:
\[d_g(x,y)=\inf_{\{\gamma\in Mor([0,1],\mathcal L);\gamma(0)=x,\gamma(1)=y\}}\int_0^1\sqrt{g(\partial_t\gamma(t),\partial_t\gamma(t))},dt.\]

\subsection{Equivalent Classes of morphisms}\label{Top}
We will say that two morphisms $f$ and $f'$ in $Mor^r(\mathcal{L},\mathcal{L}')$ (resp. $Im^r(\mathcal{L}, \mathcal{L}')$ and $End^r(\mathcal{L}))$ are \emph{equivalent} if they send each leaf of $\mathcal L$ into the same leaf of $\mathcal L'$. The equivalence class of $f$ will be denoted by $Mor^r_f (\mathcal{L},\mathcal{L}')$
(resp. $Im_f^r (\mathcal{L},\mathcal{L}')$ and $End_f^r(\mathcal{L})$).

Given a Riemannian metric $g$ on $(L',\mathcal L')$, we endow the equivalence class with the $C^r$-compact-open topology. Let us describe elementary open sets which generate this topology.

Let $K$ be a compact subset of $L$ such that $K$ and $f(K)$ are included in distinguished open subsets endowed with charts $(h,U)$ and $(h',U')$. We define $(\phi,\psi)$ by $ h'\circ f\circ h^{-1}= (\phi ,\psi)$ on $h(K)$.

Let $\epsilon>0$. The following subset is an elementary open set of the topology:
\[\Omega:=\Big\{f'\in Mor^r_f (\mathcal{L} ,\mathcal{L}')\;:\;
 f'(K)\subset U' \; \mathrm{and \; there\; is } \; \phi'\;\mathrm{s.t.}\; \quad\]\[h'\circ f'\circ h^{-1}=(\phi' ,\psi)
 \; \mathrm{and}\; \max_{h(K)}\big(\sum_{s=1}^r\|\partial_x^s\phi-\partial_x^s\phi'\|\big)<\epsilon\Big\}.\]
with the convention that the sum $\sum_{s=1}^{\mathcal H}$ is zero. Thus for $r=\mathcal H$, this topology is the $C^0$-compact-open topology induced by the metrics of the leaves.

For any  manifold $M$, each of the spaces $Im^r(\mathcal L,M)$, $Emb^r(\mathcal L,M)$ and $End^r(M)$ contains a unique equivalence class. We endow these spaces with the topology of their unique equivalence class.

In particular the topology on $C^r(M,M)=End^r(M)$ is the (classical) $C^r$-compact-open topology.

%Given a lamination $(L, \mathcal L)$ $C^r$-immersed by $i$ into a Riemannian manifold $(M,g)$, we define the $C^r$-strong topology on $Im^r(\mathcal L,M)$ by the following (partially defined) distance:
%\[\forall (j,j')\in Im^r(\mathcal L,M),\quad d(j,j'):=d(j(x),j(x'))+\sup_{(x,u)\in T\mathcal L,\; \|u\|=1} \sum_{s=1}^r
%d( \partial_{T_x\mathcal L}^sj(u^s), \partial_{T_x\mathcal L}^sj'(u^s)),\]
%where $(L,\mathcal L)$ is endowed with the Riemannian distance $i^*g$ and $TM$ is endowed with the Riemannian distance induced by $g$.   
%Hence given a continuous Riemannian metric $g$ on $M$, we can identify the fiber $F_x$ of $F$ at $x$ to with the subspace $Ti(T_x\mathcal L)$ of $T_{i(x)}M$. Hence, $F_x$ is endowed with the norm   
\subsection{Tubular neighborhood of an immersed lamination}
%Let $M$ be a smooth manifold and $r\ge 1$. 
Let $r\in [\![ 1,\infty[\![ \cup \{\mathcal H\}$. 
Let $(L,\mathcal L)$ be a lamination $C^r$-immersed by $i$ into a $C^r$-manifold $M$. %By smooth, we mean that $M$ is of class $C^\infty$ if $r$ is an integer, or complex (analytic) if $r$ is ${\mathcal H}$.

 Via $i$, we can identify the bundle $T\mathcal L$ over $L$ to a subbundle of $i^*TM\rightarrow L$ whose fiber at $x\in L$ is $T_{i(x)}M$.  
The quotient bundle 
\[\pi\; :\; F:= i^*TM/T\mathcal L\rightarrow L\]
is called {\it the normal bundle} of the immersed lamination $(L,\mathcal L)$. Let $d$ be the dimension of $\mathcal L$ and let $n$ be the dimension of $M$. Thus the dimension of the fibers is $n-d$. 

A \emph{$C^r$-tubular neighborhood} of the immersed lamination $(L,\mathcal L)$ is the data of a $C^r$-lamination structure $\mathcal F$ on a neighborhood $F'$ of the $0$-section, with a $C^r$-immersion $I$ from $(F',\mathcal F)$ into $M$, such that:
\begin{itemize} 
\item[-] The preimage by $\pi_{|F'}$ of the plaques of $\mathcal L$ are plaques of $\mathcal F$. Hence the dimension of $\mathcal F$ is $n$.

\item[-] the restriction $\pi_{|F'}$ is a $C^r$-submersion,
  \item[-] $I\circ 0_F=i$.
\end{itemize}

We denote such a tubular neighborhood by $(F,\mathcal F,I,\pi)$.
%When $(L,\mathcal L)$ are endowed with a complex structure, the \emph{tubular neighborhood} is \emph{holomorphic} if $(F,\mathcal F)$ is endowed with a complex structure such that $I$ and $\pi$ are holomorphic.

\begin{prop}\label{exist:tubu} Every $C^r$-immersed lamination has a tubular neighborhood, when $r$ is a positive integer.\end{prop}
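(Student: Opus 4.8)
The plan is to build the tubular neighborhood by gluing local models using a partition of unity, just as in the classical manifold case, but keeping track of the extra leaf-direction coordinate. First I would fix a $C^r$-Riemannian metric on $M$ and consider, for each $x\in L$, a distinguished chart $h\colon U\to V\times T$ of $\mathcal L$ around $x$ small enough that $i_{|U}$ is an embedding and that the exponential map of $M$ is a diffeomorphism on a uniform-radius ball of $T_{i(x)}M$. Over such a $U$, the normal bundle $F\to L$ restricted to $U$ is trivial over each plaque: choosing a $C^r$ complement of $T\mathcal L$ inside $i^*TM$ along each plaque (continuously in the transverse parameter $t\in T$, which is possible since the plaques vary continuously and the Grassmannian of complements is contractible locally), I get a local $C^r$-trivialization $F_{|U}\cong V\times \mathbb{R}^{n-d}\times T$. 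Composing the fiber inclusion with the Riemannian exponential of $M$ gives a local immersion $I_U$ of a neighborhood of the zero section into $M$ which restricts to $i$ on the zero section, is an immersion (its differential along the fibers is the exponential's differential, an isomorphism near $0$, and along the zero section it is $Ti$), and whose plaques project to plaques of $\mathcal L$.

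Next I would assemble these local immersions. The transverse/leaf structure is the issue: the lamination structure $\mathcal F$ on a neighborhood $F'$ of the zero section is \emph{forced} by the first bullet of the definition — the plaques of $\mathcal F$ must be the $\pi^{-1}$-preimages of plaques of $\mathcal L$ — so $\mathcal F$ is simply the pulled-back lamination structure under $\pi$, with distinguished charts $(x,v)\mapsto (h(x), v)$ in the local trivializations; one checks the coordinate changes have the required form (block-triangular, locally constant in $t$, $C^r$ in the $\mathbb{K}^d\times\mathbb{R}^{n-d}$-directions) because the changes of plaque-complement and the chart changes of $\mathcal L$ do. What is \emph{not} forced is the immersion $I$, which must be produced globally. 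Here I take a locally finite refinement $(U_\alpha)$ of the cover with a subordinate $C^r$ partition of unity $(\rho_\alpha)$ (laminations are locally compact and second countable, hence paracompact, so this exists, and along leaves one uses $C^r$ bump functions in the $\mathbb{K}^d$-coordinate, constant transversally). The naive convex combination $\sum_\alpha \rho_\alpha I_{U_\alpha}$ does not make sense since $M$ is only a manifold, so instead I would use the standard trick: combine the local immersions \emph{inside the normal bundle} by averaging the corresponding complement-and-coordinate data to get a single global $C^r$ section of an appropriate bundle of ``tubular-neighborhood jets'' along the zero section, equivalently glue using the exponential map as $I(\xi) = \exp_{i(\pi(\xi))}\big(\sum_\alpha \rho_\alpha(\pi\xi)\,\sigma_\alpha(\xi)\big)$ where $\sigma_\alpha(\xi)\in T_{i(\pi\xi)}M$ is the vector representing $\xi$ in the $\alpha$-th trivialization; this is $C^r$ in the leaf directions, continuous transversally, equals $i\circ\pi$ on the zero section, and on a small enough neighborhood $F'$ of the zero section its fiber-derivative at the zero section is an isomorphism onto a complement of $Ti(T\mathcal L)$, hence $I$ is an immersion there after shrinking.

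Finally I would verify the three defining conditions on the resulting $(F,\mathcal F, I, \pi)$: $\pi_{|F'}$ is a submersion of laminations by construction of $\mathcal F$; $I\circ 0_F = i$ by the partition-of-unity identity at zero; and $I$ is a $C^r$-immersion on a suitably shrunk $F'$ by the open condition on immersions (the set where the differential along all of $T\mathcal F$ is injective is open and contains the zero section).

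\textbf{Main obstacle.} The genuinely delicate point is transverse regularity: unlike the foliation case, the transverse parameter space $T$ is merely a metric space, so ``$C^r$'' only ever means $C^r$ along plaques with continuous dependence on $t$; I must make sure that every choice (the plaque-complements $E_t$, the local trivializations, the partition of unity, the exponential composition) depends \emph{continuously} on $t$ and that the continuity survives the gluing — in particular that a locally finite refinement and subordinate partition of unity exist in the laminated sense, and that convex combinations of continuously varying complements stay complements on a uniform neighborhood. The classical $C^r$ mechanics of the tubular neighborhood theorem are routine; the care needed to thread the laminated (continuous-transverse) structure through each step is what the proof has to spell out, and it is also exactly where the hypothesis ``$r$ a positive integer'' is used — smooth bump functions and convex averaging are available, which they would not be for $r=\mathcal H$.
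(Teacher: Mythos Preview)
Your proposal is correct and, at its core, follows the same strategy as the paper: identify the normal bundle $F$ with a complement of $Ti(T\mathcal L)$ inside $i^*TM$, push it into $M$ via the Riemannian exponential, and equip $F$ with the lamination structure whose plaques are $\pi^{-1}$ of $\mathcal L$-plaques. The only real difference is organizational. The paper first produces a \emph{global} $C^r$ lifting $N$ of $i$ into the Grassmannian of $(n-d)$-planes of $TM$ with $N(x)\oplus Ti(T_x\mathcal L)=T_{i(x)}M$ (this is quoted from an appendix of \cite{PB1}), then defines $I(x,u)=\exp_{i(x)}(u')$ with $u'$ the representative of $u$ in $N(x)$, and finally invokes Lemma~\ref{L-fibre} to obtain the laminar structure $\mathcal F$ on $F$. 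You instead build the global complement implicitly, by averaging the local representatives $\sigma_\alpha(\xi)$ with a laminated partition of unity; since each $\sigma_\alpha$ is a fiberwise-linear section of the surjection $i^*TM\to F$, so is their convex combination, and its image is automatically a complement---your worry that ``convex combinations of complements stay complements on a uniform neighborhood'' is in fact a non-issue. What you gain is self-containment (no appeal to \cite{PB1}); what the paper gains is brevity, by isolating the two reusable ingredients (the global Grassmannian lifting and Lemma~\ref{L-fibre}) that it needs again later for the stable/unstable laminations in Proposition~\ref{trans}.
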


\begin{rema} The above proposition is not always true in the holomorphic case. For example, the equation $y^2=4X^3+aX+b$ defines a torus of $\mathbb C\mathbb  P^2$, whose modulus varies with $(a,b)\in \mathbb C^2$. Such tori depend (differentially) smoothly on $(a,b)$ but are not all biholomorphic. Thus, such tori cannot have a holomorphic tubular neighborhood.\\
Nevertheless, all Stein submanifolds (see \cite{Siu}) among others are endowed with an analytic tubular neighborhood.\end{rema}  
\begin{proof}[Proof of Proposition \ref{exist:tubu}] We showed in \cite{PB1} appendix A.2.1 that there exists a $C^r$-lifting $N$ of $i$ in the Grassmannian of $(n-d)$-planes of $TM$ such that, for every $x\in L$:
\[N(x)\oplus Ti(T_x\mathcal L)=T_{i(x)}M.\]
As the above sum is direct, we shall canonically identify the fiber $F_x$ of $F$ at $x$, with the subspace $N(x)$. Let $g$ be the smooth Riemannian metric on $M$, and $\exp$ be the exponential map associated to $g$.\[\mathrm{Let }\; I\; :\; F\rightarrow M\]
\[(x,u)\mapsto \exp_x(u'),\]
where $u'$ is a representative of $u$ in $N(x)$.\\
\indent Such a map $I$ is well defined on a neighborhood of the zero section of $F$. The existence of a suitable laminar structure $\mathcal F$ follows from Lemma \ref{L-fibre}. 
\end{proof} 

\section{Persistence of laminations}
\subsection{Preserved laminations}
 A lamination $(L,\mathcal L)$ embedded by $i$ into a manifold $M$ is \emph{preserved} by an endomorphism $f$ of $M$ if
 each embedded leaf of $\mathcal L$ is sent by $f$ into an embedded leaf of $\mathcal L$.
 
This is equivalent to suppose the existence of an endomorphism $f^*$ of $(L,\mathcal L)$ such that the following diagram commutes:
\begin{equation}\label{commute}\begin{array}{rcccl}
&&f&&\\
&M&\rightarrow&M&\\
i&\uparrow&&\uparrow&i\\
&L&\rightarrow&L&\\
&&f^*&&\end{array}\end{equation}

The endomorphism $f^*$ is called the \emph{pullback of $f$ via $i$}.

When the lamination is only immersed by $i$, these two definitions are not equivalent.

A lamination $(L,\mathcal L)$ immersed by $i$ into a manifold $M$ is \emph{preserved} by an endomorphism $f$ of $M$ if there exists a \emph{pull back of $f$ in $(L,\mathcal L)$ via $i$}. That is an endomorphism $f^*$ of $(L,\mathcal L)$ such that the diagram (\ref{commute}) commutes. We notice that the pull back $f^*$ has at least the minimum of the regularities of $f$ and $i$, as soon as $f^*$ exists and is continuous.

The leaves of a lamination $(L,\mathcal L)$ immersed by $i$ into a manifold $M$ are \emph{preserved} by an endomorphism $f$ of $M$ if the immersion of each leaf of $\mathcal L$ is sent by $f$ into the immersion of a leaf of $\mathcal L$.

Clearly, if $f$ preserves an immersed lamination, then it preserves its leaves. We give in \cite{PB1}  two examples of diffeomorphisms preserving the leaves of an immersed lamination but not the lamination.

\subsection{Persistence of laminations}
%Let us fix $\mathcal C\in \{C^r,\; r\ge 1\}\cup\{{\mathcal H}\}$. 
Let us fix $r\in[\![1,\infty[\![\cup \{\mathcal H\}$. 

Let $(L,\mathcal L)$ be a lamination $C^r$-embedded by $i$ into a manifold $M$.  Let $f$ be a $C^r$-endomorphism of $M$ which preserves $\mathcal L$. The \emph{embedded lamination} $(L,\mathcal L)$ is \emph{$C^r$-persistent} if for any endomorphism $f'$ $C^r$-close to $f$, there exists an embedding $i'$ $C^r$-close to $i$ such that $f'$ preserves the lamination $(L,\mathcal L)$ embedded by $i'$ and such that 
each point of $i'(L)$ is sent by $f'$ into the image by $i'$ of a small plaque containing $f(x)$. 
This implies that the pullback $f'^*$ of $f'$ is equivalent and $C^r$-close to the pullback $f^*$ of $f$.

Let $(L,\mathcal L)$ be a lamination immersed by $i$ into a manifold $M$.  Let $f$ be a $C^r$-endomorphism of $M$ which preserves $\mathcal L$. Let $f^*$ be a pull back of $f$ in $(L,\mathcal L)$.
The \emph{immersed lamination} $(L,\mathcal L)$ is $C^r$-\emph{persistent} if for any endomorphism $f'$ $C^r$-close to $f$, there exists an immersion $i'$ $C^r$-close to $i$, such that $f'$ preserves the lamination $(L,\mathcal L)$ immersed by $i'$, and has a pullback $f'^*$ in $(L,\mathcal L)$ equivalent and $C^r$-close to $f^*$. In other words, for  every $f'\in End^r(M)$ close to $f$ there exists $i'\in Im^r(\mathcal L,M)$ and $f'^*\in End_{f^*}^r(\mathcal L)$ close to respectively $i$ and $f^*$ such that the following diagram commutes:
\[\begin{array}{rcccl}
&&f'&&\\
&M&\rightarrow&M&\\
i'&\uparrow&&\uparrow&i'\\
&L&\rightarrow&L&\\
&&f'^*&&\end{array}\]

In the above definitions,  the topologies of the spaces $End^r(M)$, $Im^r(\mathcal L,M)$, $Emb^r(\mathcal L,M)$ and $End_{f^*}^r(\mathcal L)$ are described in section \ref{Top}.

\subsection{Normal hyperbolicity}
Let $(L,\mathcal L)$ be a lamination imbedded by $i $ into be a Riemannian manifold $(M,g)$. Let $f$ be an endomorphism of $M$ which preserves the immersion $i$ of $(L,\mathcal L)$. Let $f^*$ be a pullback of $f$. 
 
We identify, via the immersion $i$, the bundle $T\mathcal L\rightarrow L$ to a subbundle of $\pi\;:\; i^*TM\rightarrow L$. We remind that the fiber of $i^*TM$ at $x\in L$ is $T_{i(x)}M$. 

\begin{defi}\label{def2.1} For every $r\ge 0$, we say that $f$ \emph{is $r$-normally hyperbolic to the lamination $(L,\mathcal L)$ (immersed by $i$ over $f^*$)}, if the following conditions hold:
\begin{itemize}
\item there exists a splitting $i^*TM= E^s\oplus T\mathcal L\oplus E^u$ such that $E^s$ is $i^*Tf$-stable: $Tf$ sends $E^s_x$ into $E^s_{f^*(x)}$, for every $x\in L$,
\item there exist a norm on $i^*TM$ and $\lambda<1$ such that, for any unit vectors $v_s\in E^s_x$, $v_c\in E^c_x$ and $v_u\in E^u_x$, we have: 
\[\|Tf(v_s)\|<\lambda\cdot \min(1,\|Tf(v_c)\|^r),\]
\[\|p_u \circ Tf(v_u)\|>\lambda^{-1}\cdot \max (1,\|Tf(v_c)\|^r),\]
where $p_u$ is the projection of $i^*TM_{f^*(x)}=T_{i\circ f^*(x)}M$ onto $E^u$, parallelly to $E^s\oplus T\mathcal L$.
\end{itemize}
When  $E^u$ has dimension zero, we say that $f$ $r$-\emph{normally contracts} $(L,\mathcal L)$.  When $E^s$ has dimension zero, we say that $f$ $r$-\emph{normally expands} $(L,\mathcal L)$.   
\end{defi}

\begin{rema} Usually, one defines the normal hyperbolicity without changing the metric, but by looking some iterate of the dynamics. The definition stated above is actually more general. It is even strictly more general when the lamination is non-compact. Both of them are strictly more general that the one of HPS, even if their proof works as well for the usual definition.\end{rema}

\begin{rema} When $f^*$ is bijective, the subbundle $E^u$ of $i^*TM$ can be chosen $i^*Tf$-invariant: $Tf(E^u_x)=E^u_{f^*(x)}$, for every $x\in L$.\end{rema} 

\begin{rema} For the non-compact case, sometime we need to generalize the concept of normal hyperbolicity as follow. Let $f$ be an endomorphism of a manifold $M$ and let $(L,\mathcal L)$ be a lamination immersed by $i$ into $M$. We say that $f$ is \emph{$r$-normally hyperbolic} to $(L,\mathcal L)$
over a morphism $f^*$ from the restriction of $(L,\mathcal L)$  to an open subset $D\subset L$ to $\mathcal L$ if:
\begin{itemize} \item the following diagram commutes
$\begin{array}{rcccl}
&&f&&\\
&M&\rightarrow&M&\\
i&\uparrow&&\uparrow&i\\
&L'&\rightarrow&L&\\
&&f^*&&\end{array}$,
\item the properties of Definition \ref{def2.1} are satisfied.
\end{itemize}
\end{rema}

\begin{Property}\label{section:grass}
Let $(L,\mathcal L)$ be a lamination immersed by $i$ into a manifold $M$ and $1$-normally hyperbolic.
Then, for all $x$, $y\in L$ with the same images by $i$, the spaces $E^s_x$ and $E^s_x\oplus T_x \mathcal L$ have the same images by $Ti$ as respectively $E^s_y$ and $E^s_y\oplus T_y\mathcal L$.

Thus we can denote by respectively $E^s_{i(x)}$ and $(T\mathcal L\oplus E^s)_{i(x)}$ the subspaces of $T_{i(x)}M$ corresponding to $E^s_x$ and  $T_x\mathcal L\oplus E^s_{x}$, via the canonical identification of $i^*TM_x$ with $T_{i(x)}M$.
 
Moreover, for every compact subset $K$ of $L$, the section of the Grassmannian $z\in i(K)\rightarrow (E^s\oplus T\mathcal L)_z$ is continuous.
\end{Property}
\begin{proof} See \cite{PB1} Property 2.1.7.\end{proof} 

\begin{rema}
There exist compact laminations $(L,\mathcal L)$ that are immersed by some $i$, normally contracted (and so normally hyperbolic), with bijective pullback, but for which there are two different points $x$ and $y$ in $L$ whose images by $i$ are the same whereas  the images of $T_x\mathcal L$ and $T_y\mathcal L$ by $Ti$ are different.

For instance, the following map of the cylinder $\mathbb R\times \mathbb S^1$:
\[\phi\; : \; (x,\theta) \mapsto \Big(\frac{x}{2}+\frac{1}{2}\cdot \sin(\theta), 2\cdot \theta\Big)\]
preserves such an immersed solenoid around $\{0\}\times \mathbb S^1$; we represent it in figure \ref{attracteur} by identifying the cylinder to the punctured plane.       
\end{rema}

\begin{figure}[h]
    \centering
        \includegraphics{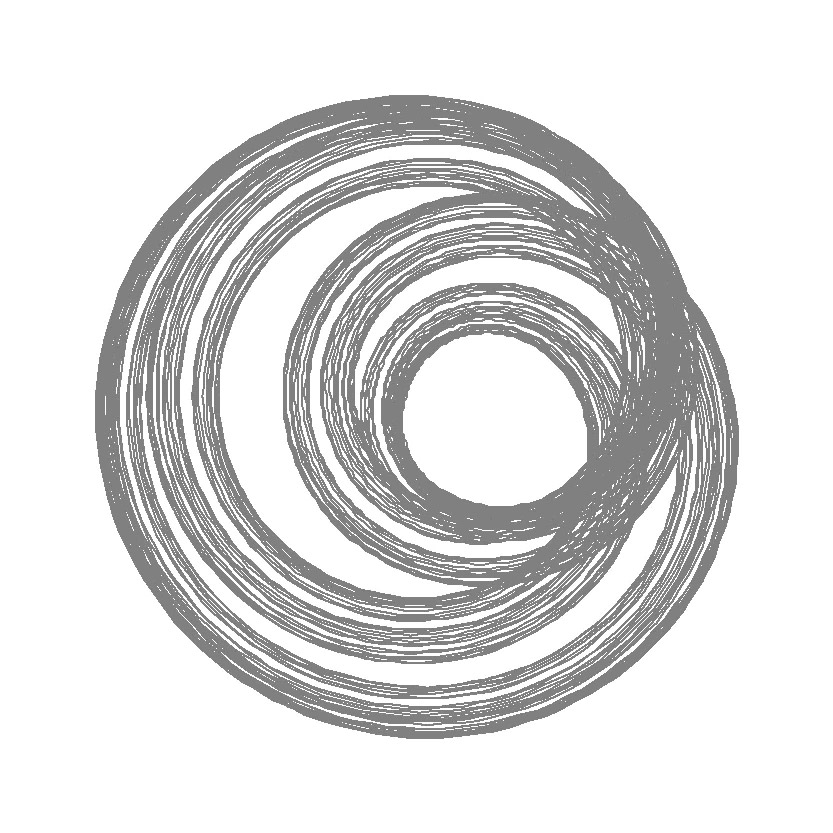}
    \caption{An attractor of the punctured plane.}
    \label{attracteur}
\end{figure}

%\begin{rema}\label{preouvert} When the pullback is bijective, the definition of the $r$-normal hyperbolicity above is equivalent to the following:
%
%There exist $\lambda>1$, a norm on $i^*TM$ and a splitting:
%\[i^*TM=T\mathcal L\oplus E^s\oplus E^u\]
%not necessarily $i^*Tf$-stable, but satisfying for every $x\in L$, for all unitary vectors $v_c\in T_{x}\mathcal L$, $v_u\in E^u_x$, and $v_s\in E^s_x$:
%\[\|p_u\circ Tf(v_u)\|\ge \lambda \cdot \max(1,\|Tf(v_c)\|^r),\]
%\[\|p_s\circ Tf(v_s)\|\le \lambda^{-1}\cdot \min(1,\|Tf(v_c)\|^r),\] 
%where $p_u$ and $p_s$ are equal to respectively the projection of $i^*TM$ onto $E^s$ and $E^u$, parallelly to $E^s\oplus T\mathcal L$ and $E^u\oplus T\mathcal L$.\end{rema}

\begin{exem}[Suspension of a holomorphic diffeomorphism]\label{suspension} Let $H\;: \; (z,z')\in \mathbb C^2\rightarrow (z^2+c+bz',z)\in \mathbb C^2$ be a complex H\'enon map which lets invariant a compact subset $K$ of  $\mathbb C^2$ ($H(K)=K$) and is hyperbolic at $K$. Such a compact subset can be the Julia set of $K$ when $|c| > 2(|b| + 1)^2$ ($H$ is a horseshoe). Let $M$ be the quotient of $\mathbb C\setminus \{0\}\times \mathbb C^2$ by the proper, discontinuous and fixed-point-free group action:
\[\mathcal G:= \Big\{g_n\;:\; g_n(x,z,z')\mapsto (2^n\cdot x,H^n(z,z')),n\in \mathbb Z\Big\}.\]
The complex $1$-lamination $(\tilde L,\tilde {\mathcal L}):=\mathbb C\setminus \{0\} \times K$ is $\mathcal G$-equivariant and so its image by the projection $\mathbb C\setminus \{0\}\times \mathbb C^2\rightarrow M$ is a compact (complex) lamination $(L,\mathcal L)$. The endomorphism $\tilde h:= (x,z,z')\mapsto (a\cdot x, H(z,z'))$ preserves and is normally hyperbolic to $(\tilde L,\tilde {\mathcal L})$, for any $a\in\mathbb C\setminus \{0\}$. Moreover, $\tilde h$ is $\mathcal G$-equivariant. Thus $\tilde h$ defines a diffeomorphism $h$ of $M$ which is $r$-normally hyperbolic to $(L,\mathcal L)$, for any $r\ge 0$. By Theorem \ref{preth2}, this (immersed) lamination is $C^r$-persistent for any $r\ge 1$.
\end{exem}

All of the above laminations are persistent as embedded laminations since the dynamics are plaque-expansive. Let us recall the definition of plaque-expansiveness.            
 
\subsection{Plaque-expansiveness}\label{plaque-expansiveness}
\begin{defi}[Pseudo-orbit which respect $\mathcal L$] Let $(L,\mathcal L)$ be a lamination and $f$ be an endomorphism of $(L,\mathcal L)$. Let $\epsilon$ be a positive continuous function on $L$. An \emph{$\epsilon$-pseudo-orbit (resp. backward-pseudo-orbit, resp. forward-pseudo-orbit) which respects $\mathcal L$} is a sequence $(x_n)_{n}\in L^{\mathbb Z}$ (resp. $(x_n)_{n}\in L^{\mathbb Z^-}$, resp. $(x_n)_{n}\in L^{\mathbb N}$) such that, for any $n$, the point
$f(x_n)$ belongs to a plaque of $\mathcal L$ containing $x_{n+1}$ whose diameter is less than $\epsilon(x_{n+1})$.\end{defi}

\begin{defi}[Plaque-expansiveness] Let $\epsilon>0$. The endomorphism $f$ is \emph{$\epsilon$-plaque-expansive (resp. backward-plaque-expansive, resp. forward-plaque-expansive) at $(L,\mathcal L)$} if for any positive function $\eta$ less than $\epsilon$, for all $\eta$-pseudo-orbits $(x_n)_{n}$ and $(y_n)_{n}$ which respect $\mathcal L$, such that for any $n$
the distance between $x_n$ and $y_n$ is less than $\eta(x_n)$, then $x_0$ and $y_0$ belong to a same small plaque of $\mathcal L$. \end{defi}

%\begin{rema} Usually, one supposes $L$ to be compact and $\epsilon$ to be constant. This is coherent with this definition by replacing $\epsilon$ by its minimum.\end{rema}
\begin{rema} We remark that every map $f'\in End_{f}(\mathcal L)$ close enough to $f^*$ is plaque-expansive (resp. backward-plaque-expansive, resp. forward-plaque-expansive).\end{rema}% Thus, by remark ..., the hypotheses of the Theorem \ref{preth1} and \ref{preth2} are open.\end{rema}

\begin{rema} Forward or backward plaque-expansiveness implies plaque-expansiveness. 
\end{rema}

 We do not know if normal hyperbolicity implies plaque-expansiveness, even when $L$ is compact. But in many cases this is true.  Let us recall the results on plaque-expansiveness in the diffeomorphism case.
 Let $f$ be a diffeomorphism which preserves and is normally hyperbolic to a compact lamination $(L,\mathcal L)$.
  If one of the following situations occur, then $f$ is plaque-expansive at $\mathcal L$.
\begin{itemize}
\item \cite{HPS} the leaves of the lamination are the fibers of a bundle,
\item \cite{HPS} the lamination is  locally a saturated subset of $C^1$-foliation,
\item \cite{RHU} if for any $\epsilon>0$, there exists $\delta>0$ such that for any $x\in L$, $f^{n}(\mathcal L_x^\delta)$ is included in $\mathcal L_{f^n(x)}^\epsilon$, for any $n\in \mathbb Z$ (or $n\ge 0$ in the normally expanded case).   
 \end{itemize} 
In \cite{PB1}, we establish a generalization of the last result in the endomorphism context: we suppose the lamination normally expanded. Then if there exist $\epsilon>0$ and $\delta>0$ such that:
\begin{itemize}\item for every $x$ the subset $\mathcal L_x^\epsilon$ is precompact in its leaf,
 \item for any $x\in L$, the map $f^{n}$ sends $\mathcal L_x^\delta$ into $\mathcal L_{f^n(x)}^\epsilon$, for any $n\ge 0$.\end{itemize}
Then the endomorphism $f$ is forward plaque-expansive.

\section{Main theorem on  differentiable persistence}
\subsection{Statement}
 The following is our main theorem on $C^r$-persistence of non compact laminations; it implies Theorems \ref{preth1} and \ref{preth2}. 
 \begin{theo}\label{Main}
Under the hypotheses of Theorem \ref{preth1} (resp. \ref{preth2}) except that the lamination $(L,\mathcal L)$ is not necessarily compact, for every precompact open subset $L'$ of $L$, for every $f'$ $C^r$-close to $f$ there exist an immersion $i(f')$ of $(L,\mathcal L)$ into $M$ satisfying:

\begin{enumerate}
\item there exists a $C^r$-morphism $f'^*$ from $(L',\mathcal L_{|L'})$ into $(L,\mathcal L)$ such that the following diagram commutes:
 \[\begin{array}{rcccl} &        &     f'     &        &\\
                        &M       &\rightarrow&M       &\\
                       i(f')&\uparrow&           &\uparrow&i(f')\\
                        &L'       &\rightarrow&L       &\\
                        &        &f'^*        &        &\end{array}\]   
\item the morphism $f'^*$ is equivalent and $C^r$-close to the restriction $f^*_{|L'}$ of a pullback $f^*$ of $f$; the immersion $i'$ is $C^r$-close and is equal to $i$ on the complement of a compact subset of $L$ independent of $f'$.
%\item Any point $y$ of $\Lambda:= \cap_{n\ge 0} f'^{-n}(W)$ (resp. $\Lambda:= \cap_{n\in\mathbb Z} f'^n(W))$ is the image by $i(f')$ of a point of $\Lambda^*:= \cap_{n\ge 0} f'^{*-n}(L')$ (resp. $\Lambda^*:=\cap_{n\in\mathbb Z} f'^{*n}(L'))$.\label{julia}
%\item $i(f')$ is a composition of $I$ with a section of $F\rightarrow L$, for any (previously) fixed tubular neighborhood $(F,\mathcal F,I,\pi)$ of the immersed lamination $(L,\mathcal L)$. Moreover the restriction $i(f')_{|\Lambda^*}$ is the unique composition $i^0$ of $I$ with a $C^0$-section of $F_{|\Lambda^*}$ such that the following diagram commutes: 
%
% \[\begin{array}{rcccl} &        &     f'     &        &\\
%                        &M       &\rightarrow&M       &\\
%                       i^0&\uparrow&           &\uparrow&i^0\\
%                        &\Lambda^*       &\rightarrow&\Lambda^*       &\\
%                        &        &f^{0^*}        &        &\end{array},\]
%for some $f^{0*}\in Mor^0(\mathcal L_{|L'},\mathcal L)$ equivalent and $C^0$-close to $f^*$.
\item There exists $\epsilon>0$ such that for any $f'$-orbit $(y_n)_{n\ge 0}\in M^\mathbb N$ (resp. $(y_n)_{n}\in M^\mathbb Z$) which is $\epsilon$-close to the image by $i$ of an $\epsilon$-pseudo-orbit $(x_n)_n\in L'^{\mathbb N}$ (resp. $(x_n)_n\in L'^{\mathbb Z}$) of $f^*$, respecting the plaques, then there exists $z_0$ uniformly close to $x_0$ in its leaf sent to $y_0$ by  $i(f')$. 
\end{enumerate}       
\end{theo}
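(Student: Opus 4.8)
The plan is to prove Theorem~\ref{Main} by the classical Hadamard--Perron graph-transform method, adapted to the laminar and endomorphism settings, working over the precompact piece $L'$ and treating the tubular neighborhood as the ambient space in which the perturbed invariant lamination will be constructed as a section. First I would fix a $C^r$-tubular neighborhood $(F,\mathcal F,I,\pi)$ of the immersed lamination (Proposition~\ref{exist:tubu}), together with the adapted splitting $i^*TM = E^s\oplus T\mathcal L\oplus E^u$ and the adapted norm from Definition~\ref{def2.1}; after shrinking $F$ and using local compactness of $\mathrm{cl}(L')$, I would express $f'$ near $i(L')$ in these coordinates as a laminated map that is, for $f'$ close to $f$, a small perturbation of the model $(x,u^s,u^u)\mapsto (f^*(x), A^s_x u^s + \text{h.o.t.}, A^u_x u^u + \text{h.o.t.})$ where $\|A^s_x\|<\lambda$, $\|(A^u_x)^{-1}\|<\lambda$ relative to the leafwise derivative $Tf^*$.

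Next I would set up the relevant function space: the space $\Sigma$ of continuous sections $\sigma$ of $F\to L$ over $\mathrm{cl}(L')$ (or a slightly larger saturated open set) that are uniformly $C^r$-bounded along plaques, leafwise $C^r$, and $\epsilon$-close to the zero section, with the zero section fixed outside a fixed compact set; this is a complete metric space. On $\Sigma$ I would define the graph transform $\Gamma_{f'}$: given $\sigma$, push its graph forward by $f'$, and at each point of $L'$ take the unique preimage piece that stays in the tube --- here I must use the pullback $f^*$ (bijective in the \ref{preth2} case, merely normally expanding in the \ref{preth1} case) to identify over which plaque to invert, and I must invert the unstable/expanding direction while the stable direction is handled by direct push-forward. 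The normal hyperbolicity (resp. expansion) inequalities make $\Gamma_{f'}$ a contraction in the $C^0$ norm; its unique fixed point $\sigma_{f'}$ has a graph whose image under $I$ defines the perturbed immersion $i(f')$, and $r$-normal hyperbolicity upgrades the regularity to leafwise $C^r$ by the standard fiber-contraction theorem (Hirsch--Pugh), using the $r$-domination to control the derivatives up to order $r$ along plaques; continuity transverse to the leaves comes from the continuity of all data in the transverse parameter. The commuting diagram with $f'^*$ and the estimates $i(f')\to i$, $f'^*\to f^*_{|L'}$ as $f'\to f$ fall out of the construction, and $i(f')=i$ off a fixed compact set because the graph transform is localized.

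Item (3), the shadowing-type statement, I would extract from the same contraction: an $f'$-orbit $(y_n)$ that $\epsilon$-shadows $i$ of an $\epsilon$-pseudo-orbit $(x_n)$ of $f^*$ respecting plaques lies, in tube coordinates over the plaques of the $x_n$, inside the domain of the dynamics; running the graph transform along this (non-stationary) pseudo-orbit --- i.e. applying the same stable-push/unstable-pull fixed-point argument to the sequence of maps rather than a single map --- produces a point $z_0$ in the leaf through $x_0$, uniformly close to $x_0$, whose $i(f')$-image is $y_0$. Finally, the embedding half: when $i$ is an embedding and $f$ is (forward- in the \ref{preth1} case) plaque-expansive, I would argue that $i(f')$ is injective by contradiction --- two points with the same $i(f')$-image generate, via item (3) applied along their orbits, two $\epsilon$-pseudo-orbits of $f^*$ respecting plaques that stay $\epsilon$-close, so plaque-expansiveness forces them into a common small plaque, and combined with leafwise injectivity of the immersion this forces the two points to coincide; openness of the immersion-onto-image condition then gives that $i(f')$ is an embedding.

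The main obstacle I expect is the combined endomorphism-plus-lamination bookkeeping in the graph transform: because $f$ need not be injective and may have singularities, ``pulling back'' the unstable direction requires selecting the correct branch over the correct plaque, which is exactly where the existence of the (bijective, in \ref{preth2}) pullback $f^*$ and Property~\ref{section:grass} --- giving a well-defined $(E^s\oplus T\mathcal L)$ over the image --- must be used carefully; and in the non-compact case one must make all constants ($\lambda$, tube radius, $\epsilon$, $C^r$ bounds) uniform on $\mathrm{cl}(L')$ and choose the perturbation size depending only on $L'$, not on $f'$, so that the fixed section agrees with $0$ off a fixed compact set. Getting the leafwise $C^r$ regularity uniformly (rather than just $C^0$ with $C^r$ bounds) via the fiber-contraction/section theorem in this laminar category is the other technically delicate point, but it is the standard Hirsch--Pugh--Shub mechanism and the $r$-normality hypothesis is precisely tailored to it.
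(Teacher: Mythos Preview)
Your graph-transform outline is essentially what the paper does in the normally expanded case: the paper's operator $S^0$ (Lemma~\ref{lem6}) is exactly your ``take the unique preimage piece that stays in the tube,'' localized by a bump $\rho$, shown to be a $C^0$-contraction via the implicit function theorem, with $C^r$ regularity obtained by analyzing the induced action on jet/Grassmannian bundles. Your conclusion~(3) argument and the plaque-expansiveness$\Rightarrow$embedding argument also match the paper (Corollary~\ref{plaque-exp}).

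Two genuine divergences are worth flagging. First, in the normally \emph{hyperbolic} case the paper does \emph{not} run a single graph transform that simultaneously pushes the stable direction and pulls the unstable one, as you propose. Instead (Proposition~\ref{trans} and Section~9.2) it first builds auxiliary laminations $(L^s,\mathcal L^s)$ and $(L^u,\mathcal L^u)$ of the center-stable and center-unstable type, applies the already-proved normally-expanded and normally-contracted theorems to each, and defines $i(f')$ pointwise as the transverse intersection $i^s(f')(\mathcal L^{s\epsilon}_x)\,\ovfork\, i^u(f')(\mathcal L^{u\epsilon}_x)$. This decomposition sidesteps the endomorphism bookkeeping you rightly flag as the main obstacle: each half is a one-sided problem where the pullback hypothesis (bijective $f^*$ for contraction, mere existence for expansion) is used cleanly. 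Your direct approach can be made to work, but you would have to explain carefully how to solve for the section when $f$ is non-injective with singularities and both a contracted and an expanded normal direction are present; the paper simply avoids this.

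Second, your appeal to the ``standard fiber-contraction theorem'' for $C^r$ regularity is too quick in the normally \emph{expanded} endomorphism case. The paper explicitly notes (Section~7.6) that because $f$ may have singularities along the leaves, the forward action of $Tf$ on the $d$-plane Grassmannian is not well defined, so one cannot iterate the $r=1$ argument as in the contracting case; instead the paper carries out an explicit computation on the $r$-jet bundle $P^r$ (Lemma~\ref{Cr:contractivity}) showing the graph transform acts in a lower-triangular, fiberwise-contracting way on the jet coordinates. You should expect to reproduce that computation rather than cite an abstract section theorem.
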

\begin{rema} Conclusion (3) implies the uniqueness of $(i(f'), f'^*)$ in Theorems \ref{preth1} and \ref{preth2}, up to reparametrization.
\end{rema}

\begin{rema} This theorem implies the persistence of immersed compact laminations stated in Theorems \ref{preth1} and \ref{preth2} by taking $L'=L$. The persistence of  compact embedded laminations, given in these same theorems, is consequence of Corollary \ref{plaque-exp} (see below).\end{rema}   

In the normally contracting case, we have the following similar theorem:

\begin{theo}\label{Maincontract}
Let $r\ge 1$. Let $(L,\mathcal L)$ be a lamination $C^r$-immersed by $i$ into a manifold $M$. Let $D$ be an open subset of $L$.  Let $f$ be a $C^r$-endomorphism of $M$ which $r$-normally contracts the lamination $(L,\mathcal L)$ over an injective, open immersion $f^*$ from $(D,\mathcal L_{|D})$ into $(L,\mathcal L)$.

Then for any open precompact subset $L'$ of $D$ and for every $f'$ $C^r$-close to $f$, there exist an immersion $i(f')$ of $(L,\mathcal L)$ into $M$ and a $C^r$-morphism $f'^*$ of $(L',\mathcal L_{|L'})$ into $(L,\mathcal L)$ satisfying conclusions 1 and 2 of Theorem \ref{Main}. Conclusion $(3)$ is replaced by 
\begin{itemize}\item[$(3')$] there exists a real number $\epsilon>0$ such that for any $f'$-preorbit $(y_n)_{n\le 0}\in M^{\mathbb Z^- }$ which is $\epsilon$-close to the image by $i$ of an $\epsilon$-pseudo-preorbit $(x_n)_{n\le 0}\in M^{\mathbb Z^-}$ of $f^*$ respecting the plaques, there exists then $z_0$ close to $x_0$ in its leaf sent to $y_0$ by $i(f')$.  
\end{itemize} 
% Then for every open precompact subset $L'$ of $D$, 
%there exists a neighborhood $W$ of $i(L')$ such that for every $f'$ $C^r$-close to $f$, there exists an immersion $i(f')$ of $(L,\mathcal L)$ into $M$ and a $C^r$-morphism $f'^*$ of $(D,\mathcal L_{|D})$ into $(L,\mathcal L)$ satisfying the conclusion 1-2-3-4 of Theorem \ref{Main}, except that $\Lambda := \cap_{n\ge 0} f'^n(W)$ and $\Lambda^*:= \cap_{n\ge 0} f'^{*n}(L')$. 
\end{theo}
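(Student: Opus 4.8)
The plan is to mirror the proof of Theorem \ref{Main}, carrying it out on the \emph{inverse} side of the dynamics. The starting observation is that, in the normally contracting case, the natural object to iterate is not $f$ itself (which may fail to be injective and may have singularities) but the graph transform acting on sections of a tubular neighborhood, pulled back along $f^*$. Since $f^*$ is an injective open immersion from $(D,\mathcal L_{|D})$ into $(L,\mathcal L)$, for each $x\in L'$ the point $f^*(x)$ has a well-defined preimage in a neighborhood, so one can meaningfully follow preorbits of $f^*$ starting in $L'$ and staying, at least for a bounded number of steps, in $D$; precompactness of $L'$ inside $D$ is exactly what guarantees uniform room for this. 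First I would fix, using Proposition \ref{exist:tubu}, a $C^r$-tubular neighborhood $(F,\mathcal F,I,\pi)$ of $(L,\mathcal L)$, and — using the splitting $i^*TM=E^s\oplus T\mathcal L$ coming from $0$-normal (here $r$-normal) contraction — identify $F$ with the stable normal bundle $E^s$. Because $\dim E^u=0$, there is no unstable direction to control, so the fixed-point argument is one-sided: one only needs a contraction on the space of small $C^r$ sections of $F$ over $L'$ (and its finite backward iterates), not a hyperbolic graph-transform fixed point.

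The key steps, in order, are: (i) For $f'$ close to $f$, write $f'$ in the tubular coordinates $I$ and check that it still maps a neighborhood of the zero section over $f^*(\overline{L'})$ fiberwise near the zero section over $L'$, with the fiber contraction rate still dominated by $\lambda<1$ relative to the leafwise expansion/neutral behavior — this is the persistence of $r$-normal contraction as an open condition, already noted in the paper. (ii) Define the graph transform: given a $C^r$ section $\sigma$ of $F$ over (a backward-saturated neighborhood of) $L'$, push it forward by $f'$ and reparametrize over the leaves using $f^*$ restricted to $D$; normal contraction makes this operator a contraction in the $C^r$-fiber norm on the leaves, uniformly over the relevant compact set, once $f'$ is close enough to $f$. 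Here the boundedness must be handled carefully because $L$ is non-compact: one works only over the precompact $L'\subset D$ and its images, and arranges $i(f')=i$ outside a fixed compact set, exactly as in Theorem \ref{Main}. (iii) The unique fixed section $\sigma(f')$ yields the immersion $i(f'):=I\circ\sigma(f')$ over $L'$, glued to $i$ elsewhere, and the pullback $f'^*$ is read off from the leafwise action; $C^r$-closeness of $(i(f'),f'^*)$ to $(i,f^*_{|L'})$ follows from continuity of the fixed point in $f'$. (iv) For conclusion $(3')$: given an $f'$-preorbit $(y_n)_{n\le 0}$ that $\epsilon$-shadows $i$ of an $\epsilon$-pseudo-preorbit $(x_n)_{n\le 0}$ of $f^*$ respecting plaques, one lifts the $y_n$ to sections via the tubular chart and uses the backward contraction to show the lift converges to the fixed section; the point $z_0$ near $x_0$ with $i(f')(z_0)=y_0$ is then obtained by tracking where $y_0$ sits in the plaque-parametrization of $\sigma(f')$. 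If moreover $f$ is forward (here backward, for the contracting/embedded conclusion as appropriate) plaque-expansive, an argument like that for Corollary \ref{plaque-exp} promotes the immersion to an embedding.

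The main obstacle I expect is the bookkeeping forced by non-compactness together with the one-sidedness of the dynamics: the morphism $f'^*$ must be defined on all of $L'$ with image in $L$, but a single backward step of $f^*$ only stays inside $D$ for points not too close to $\partial D$, so to iterate the graph transform one needs a \emph{nested} family of precompact open sets $L'\Subset D_1\Subset D_2\Subset\cdots\Subset D$ and to verify that finitely many (and in the limit, infinitely many) backward iterates remain in the domain where $f^*$ is invertible — the estimate that makes this work is precisely the uniform fiber-contraction together with the fact that $f^*$ is an \emph{open} immersion. A secondary technical point is ensuring the graph transform is a genuine contraction in the $C^r$-norm (not just $C^0$): one differentiates the defining relation $s$ times along the leaves and checks inductively, using that $\|Tf'(v_s)\|<\lambda\min(1,\|Tf'(v_c)\|^r)$, that the $r$-th order terms are still strictly contracted — this is the same computation as in the normally hyperbolic case of Theorem \ref{Main}, restricted to the stable side, and I would simply cite it rather than redo it. Finally, one must check that the fixed section is independent of the choice of auxiliary nested exhaustion and of the tubular neighborhood up to the claimed $C^r$-closeness, which follows from the uniqueness built into the contraction.
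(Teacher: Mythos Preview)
Your overall architecture is right --- tubular neighborhood, graph transform, $C^0$ fixed point, then upgrade regularity --- and it matches the paper's. But there is a genuine gap at step (ii)--(iii): you assert that normal contraction makes the graph transform a contraction in the $C^r$-fiber norm, and later call ensuring this a ``secondary technical point'' to be handled by differentiating $r$ times and citing the hyperbolic case. The paper explicitly notes that this is \emph{false}: the operator $S_{f'}$ is in general not contracting even in the $C^1$-topology, even when $L$ is compact. Differentiating the graph transform along the leaves produces a reparametrization factor coming from $T(f^{*-1})$, and the $r$-normal contraction inequality $\|Tf(v_s)\|<\lambda\min(1,\|Tf(v_c)\|^r)$ does not by itself make the composite a contraction on $C^r$-sections. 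The paper's remedy is to abandon the $C^r$-norm on sections and instead study the action of $T\hat f'$ on the Grassmannian of $d$-planes of $T\mathcal F$ (for $r=1$) and on iterated Grassmannians (for $r>1$): this fiberwise action \emph{is} $\lambda$-contracting near $T\mathcal L$ (Lemma~\ref{contracP1}), and one then proves that the $C^0$-limit $i(f')$ is $C^1$ by showing its tangent $d$-planes are trapped in the intersection of the forward-iterated cone field. That argument --- not a citation --- is the main technical content of the proof.

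A smaller divergence: your plan to handle non-compactness via a nested exhaustion $L'\Subset D_1\Subset D_2\Subset\cdots\Subset D$ and the concern that infinitely many backward $f^*$-iterates must stay in $D$ is both unnecessary and fragile (nothing in the hypotheses forces backward orbits to remain in $D$). The paper instead fixes once and for all a bump function $\rho\in Mor^r(\mathcal L,[0,1])$ supported in a precompact neighborhood $V$ of $f^*(\overline{L'})$ and equal to $1$ on a neighborhood of $L'$; the graph transform is multiplied by $\rho$ and set equal to $i$ outside $V$, so it is globally defined from the outset and no shrinking-domain issue arises. One then needs the Leibniz-type identity $\nabla(\rho\cdot i')=T\rho\cdot i'+\rho\cdot\nabla i'$ (and its higher-order version) to check that the cut-off does not spoil the Grassmannian contraction; this is where the special horizontal distribution $\chi$ in the paper earns its keep.
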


\begin{rema}
The proofs of these Theorems do not use the fact that $f$ and its perturbation are defined on the complement of a neighborhood of $i(cl(L'))$. Thus, these theorems remain true if $f$ is a $C^r$-map from a neighborhood $U$ of $i(cl(L'))$ into $M$, for any $C^r$-perturbation $f'\in C^r(U,M)$ (except that the domain of $f'$ in the diagram has to be replaced by $U$). Also in the normally expanded case, we do not need $f^*$ to be defined outside of any neighborhood of $cl(L')$.
 \end{rema}

\subsection{Non-compact Examples}
\begin{exem} Let $r\ge 1$ and let $f_1$ be a $C^r$ diffeomorphism of a manifold $N_1$. Let $\Lambda$ be a hyperbolic compact subset. Then, by Example \ref{lamhyper}, $W^s(\Lambda)$ is an invectively  $C^r$-immersed lamination $(L_1,\mathcal L_1)$ whose leaves are stable manifolds. 
Let $E_u$ be the unstable direction of $\Lambda$ and  let
\[m:=\min_{u\in E_u\setminus \{0\}} \frac{\|Tf(u)\|}{\|u\|}.\]
We may suppose $m>1$.

Let $M_2$ be a compact Riemannian manifold and let $f_2$ be a $C^r$-endomorphism of $M_2$ whose differential has norm less than $\sqrt[r]{m}$ (hence $f_2$ has possibly many singularities and is not necessarily bijective). 

Thus, the product dynamics $f:= (f_1,f_2)$ on $M:=M_1\times M_2$ $r$-normally expands the $C^r$-immersed lamination $(L,\mathcal L):=(L_1\times M_2, \mathcal L_1\times M_2)$ over a pullback $f^*$. Let $L'$ be a precompact, open subset  of $L$, whose closure is sent into itself by $f^*$ (there exists arbitrarily big such subsets). By  Theorem \ref{Main}, 
the $C^r$-immersed lamination $(L',\mathcal L_{|L'})$ is $C^r$-persistent, since $f'^*(L')$ is included in $L'$ for $f'$ $C^r$-close to $f$. Actually, this lamination is forward plaque-expansive and we will see that this implies its persistence as an embedded lamination.
\end{exem}

\begin{exem}\label{endomorphism:variables}
Let $f\; : (z,z')\in \mathbb C^2\mapsto (z(z+e^{i\theta}),4z')\in \mathbb C^2$, with $\theta\in \mathbb R$.  The endomorphism $1$-normally expands the immersed submanifold $\mathbb C \hookrightarrow \mathbb C\times \{0\}$  (for some special Riemannian metric). Let $L'$ be a neighborhood of the filled Julia set of the polynomial function $z\mapsto z(z+1)$. By Theorem \ref{Main},
for every $f'$ $C^1$-close to $f$, there exists a $C^r$-immersion $i(f')$ of $\mathbb C\times \{0\}$ into $\mathbb C^2$ such that any point $x\in L'$ has its image by $i(f')$ sent by $f'$ to the image by $i(f')$ of some point $y\in \mathbb C\times \{0\}$. 

Let $K_{f'}$ be the set consisting of the points of $\mathbb C^2$ with bounded $f'$-orbits. One easily shows that $K_{f'}$ is included in any fixed neighborhood $W$ of $L'\times \{0\}$, for $f'$ close enough to $f$. Thus, by conclusion (3) of Theorem \ref{Main}, $K_{f'}$ is included in $i'(L')$.

 It is well known that for some perturbation $f'$ of $f$, the subset $K_{f'}$ is a Cantor set.      
\end{exem}
\begin{exem}\label{julia:fibre}  
In Example \ref{suspension} we recalled that for a large open subset $L$ of parameters $(b,c)\in\mathbb C^2$, the H\'enon map
$H_{c,b}(z,z')\mapsto (z^2+c+b\cdot z',z)$
has an uniformly  hyperbolic horseshoe. This means that $H_{c,b}$ preserves a hyperbolic compact subset $K_{c,b}$ and its restriction to $K_{c,b}$ is conjugated to a shift map on the set $\Sigma_2:= \{0,1\}^{\mathbb Z}$. 

Let $\mathbb A$ be an annulus (with finite modulus) holomorphically immersed into $L$. Let $M$ be equal to the product of $\mathbb A$ with $\mathbb C^2$. Let $f\; :\; (t,z,z')\in M\mapsto \big(t,H_{t}(z,z')\big)\in M$. By  the hyperbolic continuation theorem (or Theorem \ref{defo-hyp}), the subset $\cup_{t\in \mathbb A} K_t$ is canonically endowed with a structure of complex one-dimensional lamination $(L,\mathcal L)$, whose structure is given by the hyperbolic continuation theorem. The diffeomorphism $f$ preserves this lamination and is $r$-normally hyperbolic to it, for any $r\ge 1$. We endow $(L,\mathcal L)$ with the holomorphic tubular neighborhood $(F,\mathcal F,I,\pi)$ induced by the bundle $\mathbb A\times \mathbb C^2\rightarrow \mathbb A$. 
Let $\mathbb A'$ be an open precompact subset of $\mathbb A$ preserved by any complex automorphism of $\mathbb A$. Thus $\mathbb A'$ is also an annulus. Let $L'$ be the open precompact set equal to $L\cap (\mathbb A'\times \mathbb C^2)$. 
By Theorem \ref{Main}, for any diffeomorphism $f'$ $C^r$-close to $f$, there exists an immersion $i(f')$ $C^r$-close to the canonical inclusion, respecting the fibers of $\mathbb A\times \mathbb C^2\rightarrow \mathbb A$, and such that any point $x\in L'$ has its image by $i(f')$ sent by $f'$ into the image by $i(f')$ of a small plaque of $f(x)$. 

If $f'$ is holomorphic then it is of the form $f'(t,z,z')=(g(t),H'_{t}(z,z'))$, with $g$ a complex automorphism of $\mathbb A$.  Thus $g(\mathbb A')$ is equal to $\mathbb A'$ and $f'^*(\mathbb A')$ is also equal to $\mathbb A$. Consequently,  $f'$ preserves the lamination $(L',\mathcal L_{|L'})$ $C^r$-immersed by $i(f')$.  
\end{exem}
\subsection{Plaque-expansiveness implies injectivity}
\begin{coro}\label{plaque-exp} Under the hypotheses of Theorem \ref{Main} in the normally hyperbolic case (resp. normally expanded case, resp. under the hypotheses of Theorem \ref{Maincontract}), if $i$ is an embedding and $f^*$ is plaque-expansive (resp. forward plaque-expansive, resp. backward plaque-expansive), then the restriction of $i(f')$ to $\Lambda^*:= \cap_{n\in \mathbb Z} f'^{*^{n}}(cl(L'))$ (resp. $\cap_{n\ge 0} f'^{*^{n}}(cl(L'))$, resp. $\cap_{n\le 0} f'^{*^{n}}(cl(L'))$) is a homeomorphism, for every $f'$ $C^r$-close to $f$.\end{coro}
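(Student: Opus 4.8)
The plan is to show that the restriction of $i(f')$ to the maximal invariant set $\Lambda^*$ is injective; since $\Lambda^*$ is compact (it is a nested intersection of the compact sets $f'^{*^n}(cl(L'))$, using that $cl(L')$ is compact) and $i(f')$ is continuous, an injective continuous map on a compact space into a Hausdorff space is automatically a homeomorphism onto its image, which gives the conclusion. So the whole content is injectivity on $\Lambda^*$.

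First I would set up the argument by contradiction: suppose $x, y \in \Lambda^*$ with $i(f')(x) = i(f')(y)$ but $x, y$ not on a common small plaque. The key idea is to produce, from $x$ and $y$, two $f^*$-pseudo-orbits (in the hyperbolic case bi-infinite, in the normally expanded case forward, in the contracting case backward) which respect the plaques of $\mathcal L$, stay uniformly close to each other, and whose images under $i$ shadow a genuine $f'$-orbit (resp. forward orbit, resp. preorbit) in $M$. Concretely, consider the $f'^*$-orbit $(x_n = f'^{*^n}(x))_n$ and $(y_n = f'^{*^n}(y))_n$ inside $\Lambda^* \subset cl(L')$. By conclusion (2) of Theorem \ref{Main} (resp. \ref{Maincontract}), $f'^*$ is $C^r$-close and equivalent to $f^*_{|L'}$, so each $f^*(x_n)$ lies in a small plaque containing $x_{n+1} = f'^*(x_n)$, and likewise for $y$; thus $(x_n)_n$ and $(y_n)_n$ are $\epsilon$-pseudo-orbits of $f^*$ respecting $\mathcal L$, for $\epsilon$ as small as we wish once $f'$ is close enough to $f$. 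On the $M$-side, the common point $w_n := i(f')(x_n) = f'^n(w_0)$ (using $i(f')(x) = i(f')(y)$ and the commuting diagram, so the $f'$-orbit of $w_0$ equals $i(f')$ applied to both the $x$- and the $y$-orbit) is a genuine $f'$-orbit that is $\epsilon$-close to $i(x_n)$ and to $i(y_n)$, since $i(f')$ is $C^r$-close to $i$ and $x_n, y_n$ range in the fixed compact set $cl(L')$.

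Now conclusion (3) of Theorem \ref{Main} applies to the pseudo-orbit $(x_n)_n$ (and separately to $(y_n)_n$): there is $z_0$ uniformly close to $x_0 = x$ in its leaf with $i(f')(z_0) = w_0$, and similarly $z_0'$ close to $y_0 = y$ with $i(f')(z_0') = w_0$. Since $i$ (hence $i(f')$, being close to $i$) is an \emph{embedding}, the point of $L$ mapping to a given point of $M$ and lying near a prescribed point is unique — so in fact $z_0 = x$ and $z_0' = y$. Running the same uniqueness at every time $n$, we obtain that $x_n$ and $y_n$ are both ``the'' point near $i^{-1}(w_n)$; more usefully, $d(x_n, y_n)$ is bounded by a uniform constant that shrinks with $\epsilon$ (both are within a controlled distance of the $i$-preimage of $w_n$). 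Hence $(x_n)_n$ and $(y_n)_n$ are two $\eta$-pseudo-orbits of $f^*$ respecting $\mathcal L$ that stay $\eta$-close for all $n$, with $\eta \le \epsilon$ as small as desired. Plaque-expansiveness of $f^*$ (resp. forward, resp. backward plaque-expansiveness, matching the time-direction of the pseudo-orbits available) then forces $x = x_0$ and $y = y_0$ to lie on a common small plaque of $\mathcal L$ — contradicting our assumption. Therefore $i(f')$ is injective on $\Lambda^*$.

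The main obstacle is the middle step: extracting from conclusion (3) of Theorem \ref{Main} a genuine \emph{uniform} closeness $d(x_n,y_n) < \eta$ for all $n$ simultaneously, rather than just for one index at a time, and ensuring $\eta$ can be made smaller than any prescribed plaque-expansiveness threshold $\epsilon$ by taking $f'$ close to $f$. This requires being careful that the ``uniformly close'' in (3) is uniform in $n$ (it is, since all the pseudo-orbit points stay in the fixed compact $cl(L')$ and the estimates in Theorem \ref{Main} are uniform there), and that in the embedded case the local injectivity of $i(f')$ pins down $z_0$ to be exactly $x_n$. Once uniform shadowing in the correct time-direction is in hand, invoking the appropriate flavor of plaque-expansiveness is immediate, and compactness of $\Lambda^*$ upgrades injectivity to the homeomorphism statement.
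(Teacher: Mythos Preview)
Your overall strategy matches the paper's: show that $i(f')_{|\Lambda^*}$ is injective, then use compactness of $\Lambda^*$ to upgrade to a homeomorphism onto the image. You correctly build the two $\epsilon$-pseudo-orbits $(x_n)_n$ and $(y_n)_n$ of $f^*$ from the $f'^*$-orbits of $x$ and $y$, observe that the commuting diagram forces $i(f')(x_n)=i(f')(y_n)=:w_n$ for all $n$, and plan to invoke plaque-expansiveness once you know the two pseudo-orbits are pairwise $\eta$-close. That skeleton is exactly the paper's.

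Where you diverge is in the justification of the key step $d(x_n,y_n)<\eta$. You route this through conclusion~(3) of Theorem~\ref{Main}, but that conclusion contributes nothing here: applied to $(x_n)_n$ and $(w_n)_n$ it returns a point $z_0$ close to $x_0$ in its leaf with $i(f')(z_0)=w_0$, which you then identify with $x_0$ itself --- so you have recovered only what you started from, and in particular nothing about the distance between $x_n$ and $y_n$. The parenthetical ``both are within a controlled distance of the $i$-preimage of $w_n$'' is the real content, but $w_n$ need not lie in $i(L)$, and in any case that statement does not follow from~(3).

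The paper handles this step directly and without~(3). Since $i$ is an embedding and $\Lambda^*\subset cl(L')$ is compact, the function
\[
\phi(f') \;:=\; \min\big\{\, d\big(i(f')(x),\,i(f')(y)\big) \;:\; x,y\in \Lambda^*,\ d(x,y)>\epsilon_0 \,\big\}
\]
is positive at $f'=f$ and depends continuously on $f'$; hence $\phi>0$ on a sufficiently small neighborhood $V_f$. Therefore any two points of $\Lambda^*$ identified by $i(f')$ are automatically $\epsilon_0$-close, and since $i(f')(x_n)=i(f')(y_n)$ for every $n$, this gives $d(x_n,y_n)\le\epsilon_0$ for all $n$ at once. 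Plaque-expansiveness then places $x$ and $y$ on a common small plaque, and since $i(f')$ is an immersion $C^r$-close to the embedding $i$, it is injective on small plaques, forcing $x=y$. So: drop the appeal to~(3); the uniform closeness of the two pseudo-orbits follows immediately from $i$ being an embedding and $i(f')$ being $C^0$-close to $i$ on the fixed compact set.
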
  
\begin{proof}
As the three proofs are very similar, we shall only show the one of Theorem \ref{Main} in the normally hyperbolic case.
As $\Lambda^*$ is compact, we can take the function $\epsilon$ (of the plaque-expansiveness definition) equal to the constant $\epsilon_0:= \min_{\Lambda^*} \epsilon$.

% Let $f\mapsto(f'^*, i(f'))$ be the map provided by Theorem \ref{th1}.% (resp. \ref{th1H}).
% By  continuity of $f'\mapsto f'^*$, we may suppose that, f
For $V_f$ sufficiently small, for any $f'\in V_f$, the endomorphism $f'^*$ sends every point $x\in L'$ into the plaque $\mathcal L^{\epsilon_0}_{f^*(x)}$ containing $f^*(x)$ and with diameter less than $\epsilon_0$.

On the other hand, the following map is continuous:

\[\phi\; :\; V_f\rightarrow \mathbb R\]
\[f'\mapsto \min \big\{d\big(i'(x),i'(y)\big),\; (x,y)\in \Lambda^{*^2}\; d(x,y)>\epsilon_0\big\}, \quad \mathrm{with}\; i':= i(f').\]

As $\phi$ is positive at $f$, by restricting $V_f$, we may suppose that $\phi$ is positive on $V_f$.

Consequently, for every $f'\in V_f$, if $x,y\in \Lambda^*$ are sent by $i'$ to a same point, then they are $\epsilon_0$-distant. By commutativity of the diagram of Theorem \ref{Main}, for all $n\ge 0$, $f'^{*^n}(x)$ and $f'^{*^n}(y)$ are sent by $i'$ to a same point and hence, are $\epsilon_0$-distant. As we firstly notice, $(f'^{*^n}(x))_n$ and $(f'^{*^n}(y))_n$ are two $\epsilon_0$-pseudo-orbits. Thus, by plaque-expansiveness, $x$ and $y$ belong to a same small plaque. As $i'$ is an immersion which depends continuously on $f'$, we may suppose that $i'$ is injective on such small plaques. Thus, $x$ and $y$ are equal and $i'$ is injective.  By compactness of $\Lambda^*$ and continuity of $i'$, the map $i'_{|\Lambda^*}$ is a homeomorphism onto its image. Thus, $i'$ is an embedding.\end{proof}

\section{Space of preorbits}
The following proposition provides a way to construct bijective pullbacks in order to use Theorems \ref{preth2} and \ref{Main} on persistence of normally hyperbolic laminations and Theorem \ref{Maincontract} on persistence of normally contracted laminations.
\begin{prop}\label{preorbit-space}
Let $r\in [\! [1,\infty[\! [ \cup \{{\mathcal H}\}$ and let $(L, \mathcal L)$ be a lamination of class $C^r$. Let $D$ be an open subset of $L$ and let $f^*$ be a proper, open $C^r$-immersion from $\mathcal L_{|D}$ into $\mathcal L$.
Let $\tilde L$ be the preorbits space of $f$:
\[\tilde L:= \{(x_i)_{i\ge 0}\in L^{\mathbb N}\; : \; f(x_{i+1})=x_{i},\; \forall i\}.\]
Then there exists a canonical $C^r$-lamination structure $\tilde{ \mathcal L}$ on $\tilde L$ such that the canonical projection $\pi \; : \; (x_i)_i\in \tilde L\mapsto x_0 \in L$ is an injective, open $C^r$-immersion and the map $(x_i)_i\in  \pi^{-1}(D)\mapsto (f(x_i))_i$ is a $C^r$-immersion onto $\tilde L$.\end{prop}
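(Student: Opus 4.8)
The plan is to build the atlas on $\tilde L$ by pulling back, along the iterated dynamics, the distinguished charts of $\mathcal L$, so that a point $(x_i)_{i\ge 0}\in\tilde L$ is described locally by the chart data of its $0$-th coordinate $x_0$ together with a discrete label recording which preimage branch of $f$ is being followed at each step. More precisely, around a fixed preorbit $\underline x=(x_i)_i$, I would choose for each $i$ a distinguished chart $(U_i,h_i)$ of $\mathcal L$ at $x_i$ with $h_i\colon U_i\xrightarrow{\sim} V_i\times T_i$; since $f^*$ (which I relabel $f$ for the restriction $\mathcal L_{|D}\to\mathcal L$) is an open immersion, it is locally a homeomorphism onto a plaque-saturated open set, so after shrinking the $U_i$ one gets local inverse branches $g_i\colon U_{i-1}\to U_i$ of $f$ with $g_i(x_{i-1})=x_i$, and these are $C^r$-morphisms of laminations. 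Then $\tilde U_{\underline x}:=\{(y_j)_j\in\tilde L : y_0\in U_0,\ y_i=g_i(y_{i-1})\ \forall i\}$ is a neighborhood of $\underline x$, and $\pi$ restricted to it is a bijection onto an open subset of $U_0$; I would declare $\tilde h_{\underline x}:=h_0\circ\pi$ to be the chart. Two such charts overlap via a coordinate change that factors through coordinate changes of $\mathcal L$ composed with the branch maps $g_i$, hence has exactly the required form $(\phi,\psi)$ with $\psi(\cdot,t)$ locally constant and $\phi(\cdot,t)$ of class $C^r$ (resp. a continuous family of holomorphic maps when $r=\mathcal H$); this makes $(\tilde L,\tilde{\mathcal L})$ a $C^r$-lamination and, by construction, $\pi$ is an injective open $C^r$-immersion.

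The key steps, in order, are: (1) use openness of $f^*$ to produce local inverse branches $g_i$ of $f$ as $C^r$-morphisms and check they are compatible with plaque structure; (2) define the sets $\tilde U_{\underline x}$ and verify they form an open cover of $\tilde L$ which is locally compact and second-countable — here properness of $f^*$ is what guarantees that $\tilde L$ is locally compact (the fibers of $\pi$ are contained in inverse limits of the compact preimage sets $f^{*-1}(\text{compact})$) and that one can pass to a countable subcover; (3) verify the coordinate-change formula has the laminar form, giving the atlas; (4) read off that $\pi$ is an injective open immersion directly from $\tilde h_{\underline x}=h_0\circ\pi$; (5) analyze the shift map $\sigma\colon (x_i)_i\mapsto (f(x_i))_i$ on $\pi^{-1}(D)$: in the charts $\tilde h$ it is conjugate, via $\pi$, to the branch map $g_1\colon U_0\cap D\to U_1$ followed by a reindexing, and since each $g_i$ is a $C^r$-immersion and $\sigma$ is a bijection onto $\tilde L$ (the inverse being $(y_i)_i\mapsto (x_i)_i$ with $x_0$ any chosen preimage, but in fact $\sigma$ shifts the sequence so it is globally bijective with inverse $(y_i)_i\mapsto(y_{i+1})_i$ — wait, one must be careful, see below), conclude $\sigma$ is a $C^r$-immersion onto $\tilde L$.

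The main obstacle I expect is not the differentiable-structure bookkeeping but the point-set topology of $\tilde L$: showing it is locally compact and second-countable, which is exactly what is needed for it to qualify as a lamination in the sense of the paper. This is where the \emph{properness} hypothesis on $f^*$ is essential and must be used carefully — without it the inverse limit $\tilde L$ can fail to be locally compact. Concretely I would show that for $\underline x\in\tilde L$ and a compact neighborhood $K_0\ni x_0$ in $L$, the set $\{(y_i)_i\in\tilde L: y_0\in K_0\}$ is a closed subset of the product $\prod_i K_i$, where $K_i:=$ the component of $f^{*-i}(K_0)$ through $x_i$, each compact by properness; Tychonoff then gives compactness of this neighborhood, and second-countability is inherited from that of $L$ through the countable family of branch choices. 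A secondary subtlety is checking that the shift $\sigma$ is genuinely \emph{onto} $\tilde L$ and an immersion even though $f$ itself need not be onto or injective: surjectivity of $\sigma$ holds because any preorbit $(y_i)_i$ with $y_0\in D$-saturated image is hit by $(y_{i})_{i}$ appropriately re-anchored, and I would spell out that the local model of $\sigma$ is precisely one of the $C^r$ branch maps $g_i$, which is injective on plaques — so $\sigma$ being a bijection onto its image together with this local model yields that it is an immersion onto $\tilde L$.
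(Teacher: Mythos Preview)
Your plan has a genuine gap in the construction of the chart domains. You define
\[
\tilde U_{\underline x}:=\{(y_j)_j\in\tilde L : y_0\in U_0,\ y_i=g_i(y_{i-1})\ \forall i\ge 1\},
\]
i.e.\ the set of preorbits that follow one \emph{fixed infinite sequence} of local inverse branches. But this set is not open in the natural (inverse--limit) topology on $\tilde L$: a basic open set in $\tilde L\subset L^{\mathbb N}$ constrains only finitely many coordinates, whereas membership in $\tilde U_{\underline x}$ imposes a condition at every level. Concretely, take $L=\mathbb S^1$, $D=L$, and $f^*(z)=z^2$. Then $\tilde L$ is the dyadic solenoid, whose transversal through any point is a Cantor set. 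Your $\tilde U_{\underline x}$ is exactly an arc inside a single leaf of the solenoid, which has empty interior. Declaring $\tilde h_{\underline x}=h_0\circ\pi$ then gives each chart the transversal $T_0$ of $\mathcal L$ (a point, in this example), so your atlas does not see the genuine transversal of $\tilde{\mathcal L}$ at all; in effect you have put the leaf topology on $\tilde L$, which is neither locally compact nor second countable. A related slip: an \emph{open immersion} of laminations need not be a local homeomorphism, because the induced map on transversals can be open and continuous without being locally injective (e.g.\ $L=\mathbb R\times\{0,1\}^{\mathbb N}$ with $f(x,(c_i)_i)=(x,(c_{i+1})_i)$); so the existence of your local sections $g_i$ on full distinguished neighborhoods is not guaranteed.

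The paper's proof avoids this by taking the chart domain to be the \emph{entire} preimage $\pi^{-1}(U)$ of a distinguished open set $U\cong W\times T$ with $W$ simply connected, and by building the transversal of $\tilde{\mathcal L}$ as a projective limit. Precisely: for each $n$ one lets $T_n$ be the set of connected components, inside leaves, of $f^{-n}(U)$, shows (using simple connectedness of $W$, the immersion property, and properness to get precompactness of these components in their leaves) that each such component maps diffeomorphically onto the corresponding plaque of $U$, and then sets $\tilde T:=\varprojlim T_n$ with the natural metric. The chart is $\tilde\phi:\pi^{-1}(U)\to W\times\tilde T$, $(x_n)_n\mapsto(\phi_1(x_0),(t_n)_n)$, where $t_n\in T_n$ records the leafwise component containing $x_n$. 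This makes $\pi^{-1}(U)$ genuinely open, encodes the branch data in the transversal where it belongs, and yields the lamination structure. Your Tychonoff argument for local compactness is fine and is essentially how the paper's $\tilde T$ is seen to be locally compact; what is missing in your plan is precisely this passage to the projective--limit transversal.
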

\begin{proof} Let $(U,\phi)\in \mathcal L$ be a chart of the form $\phi \; :\; U\rightarrow W\times T$, with $U$ a precompact, open subset of $L$, $T$ a locally compact metric space and $W$ a simply connected, open subset of $\mathbb K^n$.

We want to find a metric space $\tilde T$ and a homeomorphism $\tilde \phi\; :\; \pi^{-1}(U)\rightarrow  W\times  \tilde T$ such that the following diagram commutes:
\[\begin{array}{rccl}
     &\pi^{-1}(U)      &    \tilde\phi_1           &\\
\phi &\downarrow       &\searrow       &\\  
     &     U            & \rightarrow    &W\\
     &                  &\phi_1          &\end{array}\]
where $\phi_1$ and $\tilde \phi_1$ are the first coordinates of respectively $\phi$ and $\tilde \phi$.
     
Let $(x_n)_n\in \tilde L^\mathbb N$ be such that $x_0$ belongs to $U$. For each $n\le 0$, let $U_n$ be the union of plaques of $x_n$ included in $f^{-n}(U)$. We notice that $U_n$ is the connected component of $x_n$ in the intersection of $f^{-n}(U)$ with the leaf of $x_n$. Let us show that the restriction of 
$ f^n$ to $U_n$ is a homeomorphism onto $U_0$.

First of all, $f^{-n}(U)$ is an open subset of $L$ and so $U_n$ is a connected open subset of the leaf of $x_n$. 

Let us show that $f^n_{|U_n}$ is injective. Let $y,y'\in U_n$ be sent by $\phi_1\circ f^n$ to a same point $z\in U_0$. By connectedness of $U_n$, there exists a path in $U_n$ from $y$ to $y'$. As $U_0\cong W$ is simply connected, the image by $f^n$ of this path is homotopic to the trivial path $\{z\}$. As $ f^n$ is an immersion, we can pull back this homotopy to $U_N$. This shows that $y$ and $y'$ are equal.  

As the differential of $f^n$ along the leaves at each point of $U_n$ is an isomorphism, it follows from the global inversion theorem that the restriction  of $f^n$ to $U_n$ is a diffeomorphism onto its image which is open.    

To show that its image is equal to $U_0$, by connectedness of $U_0$, it only remains to prove that $f^n(U_n)$ is a closed subset of $U_0$.

Let $z\in cl(f^n(U_n))\cap U_0$. Then there exists a sequence $(y_k)_k\in U_n^\mathbb N$ such that the sequence $(f^n(y_k))_k$ converges to $z$. Let us admit that $U_n$ is precompact in the leaf of $x_n\in \mathcal L_{|D}$. Then we may suppose that $(y_k)_k$ converges to some $y$ in the closure of $U_n$ in the leaf of $x_n\in \mathcal L_{|D}$. By continuity of $f$, the image by $f^n$ of $y$ is $z$. By openness of $f$ and $U_0$, there exists a small plaque $\mathcal L_y$ containing $y$ in the leaf of $x_n$, which is sent into $U_0$. Since $(y_n)_n$ converges to $y$ in the leaf of $x_n$, the union of $U_n$ with $\mathcal L_y$ is connected. As this union is sent into $U_0$, this union is contained in $U_n$. Hence $y$ belongs to $U_0$ and $z$ belongs to $U_0$. Therefore, $f^n(U_n)$ is closed in $U_0$, and so $ f_{|U_n}$ is a homeomorphism onto $U_0$.

Let us show that $U_n$ is precompact in the leaf of $x_n$. We endow $\mathcal L$ and $\mathcal L_{|D}$ with respectively two (possibly different) complete metrics. Then, for these Riemannian distances, the bounded and closed subsets of leaves of $\mathcal L$ and $\mathcal L_{|D}$ are compact.  Thus, to show that $U_n$ is precompact in the leaf of $x_n$ in $\mathcal L_{|D}$, it is sufficient to show that $U_n$ is bounded for the complete  distance of $\mathcal L_{|D}$. We note that:

\[\diam_{\mathcal L_{|D}} U_{k+1}\le \sup_{U_{k+1}} \|Tf^{-1}\|\cdot \diam_{\mathcal L} U_k,\]
where $\diam_{\mathcal L_{|D}}$ and $\diam_{\mathcal L} $ refer to the diameters with respect to the complete metrics of  respectively $\mathcal L_{|D}$ and $\mathcal L$.

We remind that the finiteness of $\diam_{\mathcal L_{|D}} U_k$ is equivalent to its precompactness in a leaf of $\mathcal L_{|D}$, and so its precompactness in a leaf of $\mathcal L$. Moreover,  for $k\ge 1$, by precompactness of $U_0$ and properness of $f$, the subset $U_k$ is included in $f^{-k}(U_0)$ which is precompact in $D$. Thus $\sup_{U_k} \|Tf^{-1}\|$ is finite.  Using these two last remarks, an induction proves that, for $k\ge 1$, $U_k$ is precompact in a leaf of $\mathcal L_{|D}$.
    
Let $T_n$ be the set of the connected components of each intersection of $f^{-n}(U)$ with some leaf of $\mathcal L$. For instance, $T_0$ is the set $T$. As the elements of $T_n$ are homeomorphic to $W$ and hence are locally compact, we can endow $T_n$ with the following distance:
\[ d_{T_n}\;:\; (t,t')\in T_n\mapsto \sup_{x\in t} d(x,t')+ \sup_{x'\in t'} d(x',t),\]
 where the metric $d$ refers to the one of the metric space $L$.
 
We notice that $f$ induces a continuous map from $T_{n+1}$ to $T_n$, since $f$ is uniformly continuous on $U_{n+1}$. Thus, we shall consider the projective limit:
\[\tilde T:= \lim_{\leftarrow} T_n.\]

The topology of $\tilde T$ is given, for instance, by the following metric:
\[d_{\tilde T} ((t_n)_{n\ge0}, (t'_n)_{n\ge 0})= \sum_{n\ge 0} \frac{\min (d_{T_n} (t_n, t'_n), 1)}{2^n}.\]

For such a metric $\tilde T$ is locally compact.  

Therefore, the following map satisfies the requested properties:
\[\tilde \phi \;: \; \pi^{-1}(U)\rightarrow W\times \tilde T\]
\[\underline{x}=(x_n)_{n\ge 0} \mapsto (\tilde \phi_1 (x_0), (t_n(x))_n),\]
where $t_n(x)$ is the point of $T_n$ containing $x_n$.

The above map is well defined and is a homeomorphism since the projective limit $\tilde L$ has its topology given by the following metric:
\[\tilde d:\; \big( (x_n)_n, (y_n)_n\big) \mapsto \sum_{n\ge 0} \frac{\min\big( d(x_n,y_n), 1\big)}{2^n}\]
For such metric space  $(\tilde L, \tilde d)$ is secondly countable and locally compact. Also  the family $(\pi^{-1}(U), \tilde \phi)_U$ constructed as above forms an atlas $\tilde{\mathcal  L}$ of laminations on $(\tilde L, \tilde d)$.

\end{proof}   
%The following basic example mentioned by C. Bonatti motivates his question on persistence of normally hyperbolic lamination in the endomorphism context. The proof of the persistence can also be shown by using the work of Przytycki \cite{Pry}.  
%
%\begin{exem}[Bonatti] Let $M:= \mathbb S^1\times \mathbb R$ be the cylinder. The endomorphism $f \; : \; (\theta,y)\in M\mapsto (2\theta,0)$ preserves the submanifold $\mathbb S^1:=\mathbb S^1\times \{0\}$. Its restriction to $\mathbb S^1$ is locally a $C^\infty$-diffeomorphism. Thus, by Proposition \ref{preorbit-space}, the preorbit space $\tilde {\mathbb S}^1$ of $f_{|\mathbb S^1}$ is endowed with a canonical laminar structure $\tilde {\mathcal { S}}^1$. Actually,  $(\tilde {\mathbb S}^1, \tilde {\mathcal S}^1)$ is the (compact) Smale solenoid.
%Let $i$ be the $C^\infty$-immersion of $\tilde {\mathbb S}^1$ into $M$ and let $f^*$ be the lifting of $f$ in $\tilde {\mathbb S}^1$. The $C^\infty$-endomorphism $f^*$ is surjective on $\tilde {\mathbb S}^1$. Moreover, for every $r\ge 1$, $f$ $r$-normally contracts the compact lamination $(\tilde {\mathbb S}^1, \tilde {\mathcal S}^1)$ immersed by $i$ over $f^*$. Thus, this immersion is $C^r$-persistent. In other words, for any map $f'$ $C^r$-close to $f$ there exists an immersion $i'$ $C^r$-close to $i$ and an endomorphism $f'^*$ $C^r$-close to $f^*$, such that $f'$ preserves the solenoid immersed by $i'$ over $f'^*$. The figure \ref{attracteur} gives a numerical experimentation of this persistent lamination.\end{exem}

\begin{exem}\label{henon+preorbit} Let $P$ be a polynomial function of $\mathbb C$. Let $C_0$ be the closure of the forward orbits of the critical values. Let $L_0$ be the nonempty open subset $\mathbb C\setminus C_0$ and $D_0$ be the preimage of $L_0$. The open set $L_0$ is canonically endowed with the complex one-dimensional lamination structure consisting of a single leaf. We remark that the restriction of $P$ to $D_0$ satisfies the hypotheses of the above proposition. Therefore, the set:
\[L:=\{(z_i)_{i\le 0}\in L^{\mathbb N}\;:\: P(z_{i-1})=z_i\}\]
is endowed with a structure of lamination $\mathcal L$.

 \[\mathrm{Let}\quad \pi\; :\; (z_i)_i\in L\mapsto z_0\in L_0 \quad \mathrm{and}\quad f^*\; :\; (z_i)_{i\le 0}\in D \mapsto (P(z_i))_{i\le 0}\in L, \]
 with $D$ be the preimage by $\pi$ of $D_0$.

We notice that the maps $\pi$ and $f^*$  are immersions for the structure induced by $\mathcal L$.  

The map $H\;:\; (z,z')\mapsto (P(z)+z',0)$ preserves and $r$-normally contracts the immersion $z\in L\mapsto (\pi(z),0)$ of $\mathcal L$ over $f^*$, for any $r\ge 1$. The strong stable direction is the tangent space at $z'=0$ of the foliation whose leaves are $\{(z,z'): \; z'= - P(z)\}$.

  Let $L'_0$ be a precompact open subset of $D_0$ and let $L'$ be the preimage of $L'_0$ by $\pi$. By Theorem \ref{Maincontract}, for any $C^r$-perturbation $H'$ of $H$, there exists an immersion $i(H')$ of $(L,\mathcal L)$ into $\mathbb C^2$, $C^r$-close to $i$, such that any point $z\in L'$ has its image by $i(H')$ sent by $H'$ to the image by $i(H')$ of a point $y\in L$. Moreover $y$ is close to $f^*(x)$ in its leaf of $\mathcal L$. 

We notice that for $P(z)=z^2+c$, possible perturbations are the complex H\'enon maps $H'\;:\; (z,z')\mapsto (z^2+z',bz)$, for small $b\in \mathbb C$. 
Also conclusion $(3)$ of Theorem \ref{Maincontract} implies that there exists an open neighborhood $W$ of $L'_0\times \{0\}$ in $\mathbb C^2$ such that any point of $W$ is in the image of $i(H')$ if it has a $H'$-preorbit in $W$.

This phenomenon was already known by Hubbard and Oberste-Vorth (\cite{HOV}) when $P$ is a hyperbolic map.\end{exem}

\section{Persistence of complex laminations}

%\subsection{From differentiable persistence to holomorphic persistence}

Before giving examples of our Theorems \ref{defo-exp} and \ref{defo-hyp} on persistence of complex laminations by deformation, let us explain and illustrate how to use Theorems \ref{Main} and \ref{Maincontract} on persistence of non-compact laminations in the complex analytic context. 

The problem is to transform a $C^1$ immersion given by these theorems to a holomorphic immersion. Obviously the 
$J$-invariance of the tangent space to its immersed leaves is necessarily, where $J$ is the automorphism  of the tangent bundle of the manifold provided by its complex structure ($J^2=-1$). Conversely, the $1$-normal hyperbolicity of a lamination implies the $J$-invariance of the tangent space to its immersed leaves as stated in the following propositions:

\begin{prop}\label{J1} Under the hypotheses of Theorems \ref{Main} or \ref{Maincontract} with $r=1$, if $i$ and $f$ are moreover holomorphic, then for any holomorphic perturbation $f'$ of $f$, for any $\mathcal L$-admissible\footnote{See definition in Subsection \ref{def adm}} subset\footnote{See Corollary  \ref{plaque-exp} for a more precise definition of $\Lambda^*$} $A$ of $\Lambda^*:= \cap_n f^{*n}(L')$, the tangent space to the immersed lamination $(A,\mathcal L_{|A})$ by $i(f')$ is $J$-invariant.
\end{prop}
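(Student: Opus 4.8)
The plan is to exploit the dynamical characterization of the immersed leaves given by conclusion (3) of Theorem \ref{Main} (resp.\ (3$'$) of Theorem \ref{Maincontract}): the point is that the immersion $i(f')$ is essentially forced upon us by the requirement that it conjugate $f^*$ to $f'$ along pseudo-orbits, and since $f'$ is holomorphic this rigidity will propagate $J$-invariance. Concretely, I would first observe that since $f$ and $i$ are holomorphic, $f$ is $1$-normally hyperbolic with a splitting $i^*TM = E^s\oplus T\mathcal L\oplus E^u$; by Property \ref{section:grass} the subbundles $E^s$ and $E^s\oplus T\mathcal L$ descend to well-defined, continuous subbundles over the immersed image, and one checks that at $t=0$ the tangent bundle $T\mathcal L$ of the immersed lamination is $J$-invariant (it is the tangent bundle of a complex lamination holomorphically immersed). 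The real content is to show this persists for $f'$ near $f$.

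The key step is the following transversality/uniqueness argument. Let $A$ be an $\mathcal L$-admissible subset of $\Lambda^* = \cap_n f^{*n}(L')$. Consider the immersion $i' := i(f')$ and, alongside it, the ``twisted'' map obtained by composing with $J$ in an appropriate sense — more precisely, I would compare $i'$ with the immersion one gets by running the same construction but noting that $f'$ holomorphic means $Tf'$ commutes with $J$. Since over $A$ the dynamics $f'^*$ is (conjugate via $i'$ to) an invertible map covering $f'$, and $A$ is saturated and locally compact, the bundle $Ti'(T\mathcal L_{|A})$ is an $i'^*Tf'$-invariant subbundle of $i'^*TM$ over $A$ which is normally hyperbolic in the sense that its normal directions are exponentially dominated. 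The uniqueness of the invariant subbundle with prescribed normal behavior — a graph-transform/contraction-mapping fixed point in the Grassmannian bundle, exactly as in the construction underlying Theorems \ref{Main} and \ref{Maincontract} — together with the observation that $J\cdot Ti'(T\mathcal L_{|A})$ is also such a subbundle (because $J$ conjugates $Tf'$ to itself and preserves the normal hyperbolicity estimates, $J$ being an isometry for a suitable metric), forces $J\cdot Ti'(T_x\mathcal L) = Ti'(T_x\mathcal L)$ for every $x\in A$. This is where I expect the main obstacle: making precise that $J$ really does preserve the relevant cone/domination estimates used to get uniqueness of the invariant distribution, since the metric for which the normal hyperbolicity is stated need not be Hermitian, so one may need to first replace it by a $J$-invariant (Hermitian) metric comparable to it — which is harmless because normal hyperbolicity is an open, metric-independent-up-to-iteration condition — and then invoke the uniqueness clause of the graph transform.

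Finally, I would assemble the pieces: fix $f'$ holomorphic and close to $f$; pass to a $J$-invariant metric; note $Ti'(T\mathcal L_{|A})$ and its $J$-image are both $i'^*Tf'$-invariant subbundles over the compact-like (locally compact, saturated) set $A$ sitting inside the hyperbolic set $\Lambda^*$, both tangent to pseudo-orbits tracked by conclusion (3)/(3$'$), hence both coincide with the unique such subbundle produced by the contraction argument; therefore they are equal, i.e.\ the tangent space to $(A,\mathcal L_{|A})$ immersed by $i(f')$ is $J$-invariant. An alternative, perhaps cleaner route avoiding cones is to differentiate the commuting relation $i'\circ f'^* = f'\circ i'$ along leaves: this gives $Ti'\circ Tf'^* = Tf'\circ Ti'$, so the image bundle is $Tf'$-invariant; combined with the exponential estimates inherited from closeness to $f$ and the fact that an $\overline\partial$-type obstruction to $J$-invariance would be expanded or contracted away under iteration on the admissible set $A$, one concludes the obstruction vanishes. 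I would present the cone-uniqueness version as the main argument and remark on the $\overline\partial$ heuristic.
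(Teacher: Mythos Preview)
Your core idea --- that $Ti(f')(T_x\mathcal L)$ is the unique $Tf'$-invariant $d$-plane near $Ti(T_x\mathcal L)$, and that $Tf'$ commutes with $J$ so that $J$ maps invariant planes to invariant planes --- is exactly the paper's argument, and your proposal is correct. The paper's execution is somewhat more direct than your outline: it writes down a cone field $(\chi_x)_{x\in\Lambda^*}$ with four concrete properties (it contains $Ti(f')(T_x\mathcal L)$; it is mapped into itself by $Tf'$; the plane $Ti(f')(T_x\mathcal L)$ is the unique maximal linear subspace inside it; and it contains a $J$-invariant $d$-plane), then observes via the Hilbert metric that $\bigcap_{n\ge 0} Tf'^n(\chi_{f'^{*-n}(x)}) = Ti(f')(T_x\mathcal L)$, while this same intersection contains a $J$-invariant plane because $Tf'\circ J = J\circ Tf'$.

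The point worth noting is that your detours --- invoking conclusion (3)/(3$'$), passing to a Hermitian metric so that $J$ becomes an isometry, and the $\overline\partial$-obstruction heuristic --- are all unnecessary. The $J$-invariant $d$-plane you need inside the cone is simply $Ti(T_x\mathcal L)$ itself: it is $J$-invariant because the unperturbed immersion $i$ is holomorphic, and it lies in $\chi_x$ because $i(f')$ is $C^1$-close to $i$. This sidesteps entirely the issue you flag as the ``main obstacle'' (whether $J$ preserves the domination estimates for a non-Hermitian metric): one never needs $J(\chi_x)\subset\chi_x$, only that $\chi_x$ contains \emph{some} $J$-invariant plane, and the unperturbed tangent space provides one for free.
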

\begin{proof} 
We only show the normal contracting case (Theorem \ref{Maincontract}), the proofs of the normal expanding or hyperbolic cases are very similar.
We remind that a cone of a vector space $E$ is an open set invariant by nonzero and real scalar multiplications.

As the immersion by $i$ of $(L,\mathcal L)$ is $1$-normally contracted by $f$, for $f'$-close enough to $f$, 
there exists a family of cones $(C_x)_{x\in \Lambda^*}$ such that $C_x$ is formed by vectors of $T_{i(f')(x)}M$ satisfying the following conditions for every $x\in \Lambda^*$:
\begin{enumerate}
\item $\chi_x$ contains $Ti(f')(T_x\mathcal L)$,
\item $Tf'$ sends the closure of the cone $\chi_{f'^{*-1}(x)}$ into $\chi_x\cup\{0\}$,
\item $Ti(f')(T_x\mathcal L)$ is a maximal vector subspace included in $\chi_x$,

As $i$ is holomorphic and close to $i(f')$, we may also suppose that:
\item $\chi(x)$ contains a $J$-invariant $\dim(\mathcal L)$-plane.
\end{enumerate}
By using the Hilbert metric, one can show that assertions $2$ and $3$ imply that for any $x\in \Lambda^*$, the push-forward of $\chi$ by $f$ converges to $Ti(f')(T_x \mathcal L)$:
\begin{equation}\label{une:intersection} \cap_{n\ge 0} Tf^n(\chi_{f^{*-n}(x)})=Ti(f')(T_x\mathcal L).\end{equation}
One the other hand, as $f'$ is holomorphic, its tangent map $Tf$ commutes with $J$ ($Tf\circ J=J\circ Tf$). Thus assertions $2$ and $4$ imply that $\cap_{n\ge 0} Tf^n(\chi_{f^{*-n}(x)})$ contains a $J$-invariant $\dim(\mathcal L)$-plane. 
By equation (\ref{une:intersection}), such a plane is necessarily $Ti(f')(T_x\mathcal L)$.\end{proof} 

All the relevance of the above property follows from this basic proposition: 

\begin{prop}\label{J2} Let $(L,\mathcal L)$ be a lamination $C^1$-immersed into a complex manifold $(M,J)$.  
If for every $x\in L$, $Ti(T_x\mathcal L)$ is $J$-invariant, then there exists a unique complex structure $\mathcal L_{\mathcal H}$ on $L$, which is compatible with the $C^1$-structure $\mathcal L$ and such that $i$ is a holomorphic immersion.
\end{prop}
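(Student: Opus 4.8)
The plan is to construct $\mathcal{L}_{\mathcal H}$ locally on distinguished open sets, then check it is well-defined globally and that $i$ becomes holomorphic. Fix a chart $h\colon U \to V\times T$ of $\mathcal L$, with $V$ open in $\mathbb R^d$ (thinking of $d = \dim_{\mathbb R}\mathcal L$, so $V$ should really be a domain in $\mathbb C^{d/2}$ once the complex structure is built). The hypothesis is that for each $x\in L$ the $d$-plane $Ti(T_x\mathcal L)\subset T_{i(x)}M$ is $J$-invariant, hence a complex subspace of complex dimension $d/2$; in particular $d$ is even, write $d = 2m$. On each plaque $P = h^{-1}(V^0\times\{t\})$, the map $i|_P$ is a $C^1$ immersion of the $2m$-manifold $V^0$ into $(M,J)$ whose tangent spaces are $J$-invariant, so the pulled-back almost-complex structure $J_P := (T i|_P)^{-1}\circ J \circ (T i|_P)$ is a genuine (not merely almost) complex structure on $P$ — but here is the subtlety: $J_P$ is only $C^0$ (since $i$ is only $C^1$, $Ti$ is only $C^0$), so one cannot invoke Newlander–Nirenberg naively.

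The key step is therefore to upgrade regularity. First I would observe that the leaves of $\mathcal L$ are as smooth as we like along the leaves — no, more carefully: $i$ is a $C^1$ immersion, and the obstruction to holomorphy is exactly that the coordinate on $V$ is the wrong one. The standard device (this is how one proves that a $C^1$ curve with $J$-invariant tangent is a holomorphic curve) is: the condition $Ti(T_x\mathcal L)$ is $J$-invariant means $i|_P$ satisfies a first-order elliptic PDE (the Cauchy–Riemann equations with respect to the ambient holomorphic coordinates composed with $i$, i.e. $\bar\partial$-type equations), and elliptic regularity / the similarity principle then forces the existence of local holomorphic coordinates. Concretely: pick ambient holomorphic coordinates $w = (w_1,\dots,w_n)$ on $M$ near $i(x)$; then $w\circ i|_P \colon V^0 \to \mathbb C^n$ is a $C^1$ map whose differential is complex-linear with respect to $J_P$ at every point, i.e. $w\circ i$ is "$J_P$-holomorphic". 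One extracts from this (by choosing $m$ of the $w_j$'s whose differentials restrict to an isomorphism on $T_x P$, possible by the immersion and rank condition) a local $C^1$ homeomorphism $\zeta\colon P \to \mathbb C^m$ that is a $J_P$-holomorphic immersion; solving $\bar\partial u = $ (something in $u$) via the generalized Cauchy integral operator / Ahlfors–Bers shows $\zeta$ can be taken biholomorphic onto its image, giving the desired complex chart on the plaque, and automatically making the remaining $w_j$ holomorphic functions of $\zeta$, i.e. making $i|_P$ holomorphic.

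Having the complex structure plaque-by-plaque, I would then let these charts vary in the transverse parameter $t$: the family $(\zeta_t)_{t\in T}$ of plaque-biholomorphisms is continuous in $t$ (the ambient $w$, hence $w\circ i$, hence $J_{P_t}$, hence the solution of the Cauchy-type integral equation, all depend continuously on $t$ by continuity of $i$ and standard continuity of the Cauchy transform), and $\psi$-coordinates are unchanged, so $(x,t)\mapsto (\zeta_t(x), t)$ is a lamination chart whose coordinate changes are a continuous family of biholomorphisms — exactly the definition of a $C^{\mathcal H}$-atlas. Compatibility with the $C^1$-structure $\mathcal L$ holds by construction (the chart is $C^1$-compatible with $h$). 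Uniqueness is immediate: any two complex structures making $i$ holomorphic have, on each plaque, transition maps between $\zeta$ and $\zeta'$ that are $C^1$ and intertwine the same induced complex structure $J_P$, hence are biholomorphic, so the atlases are equivalent. I expect the main obstacle to be the regularity bootstrap — turning the merely-$C^0$ almost-complex structure $J_P$ into actual holomorphic coordinates with only a $C^1$ immersion in hand — which is precisely the one-dimensional-fiberwise analogue of the measurable Riemann mapping theorem and should be cited rather than reproven; everything else (varying $t$, gluing, uniqueness) is bookkeeping.
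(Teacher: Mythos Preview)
Your outline is correct and would go through, but the ``main obstacle'' you anticipate is illusory, and the paper's argument is accordingly much shorter. The point is that once you have ambient holomorphic coordinates $w=(w_1,\dots,w_n)$ with $Ti(T_x\mathcal L)=\mathbb C^m\times\{0\}$ and you set $\zeta:=(w_1,\dots,w_m)\circ i$ on the plaque, $\zeta$ is \emph{already} the holomorphic chart: it is a $C^1$-diffeomorphism onto an open subset of $\mathbb C^m$ whose differential intertwines $J_P$ with the standard structure (because the $w_j$ are holomorphic on $M$), and that is precisely what it means for $\zeta$ to be a complex chart for $J_P$. There is nothing to integrate and no Ahlfors--Bers to invoke. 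The remaining components $w_{m+1}\circ i,\dots,w_n\circ i$, read through $\zeta^{-1}$, are $C^1$ functions on an open set of $\mathbb C^m$ with $\mathbb C$-linear differential at every point, hence satisfy the Cauchy--Riemann equations, hence are holomorphic by the elementary fact that $C^1$ plus CR implies analytic. So each plaque is the graph of a holomorphic map $\mathbb C^m\to\mathbb C^{n-m}$, $i$ is holomorphic in the chart $\zeta$, and transition maps between two such charts are holomorphic for each fixed $t$. Your worry that $J_P$ is only $C^0$ never bites: you never integrate $J_P$ abstractly, because the ambient coordinates hand you the chart for free.

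This is exactly the paper's construction: it takes $\phi'(x')=(p\circ i(x'),\phi_2(x'))$ with $p\colon\mathbb C^n\to\mathbb C^m$ the holomorphic projection, notes that plaques become graphs over $\mathbb C^m$, and declares these to be the charts of $\mathcal L_{\mathcal H}$. Your transverse-continuity and uniqueness arguments match the paper's. The only difference is the unnecessary elliptic detour; dropping it, your proof and the paper's coincide.
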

\begin{proof} 
\underline{Uniqueness}

Let $\mathcal L_{H}^1$ and $\mathcal L_{H}^2$ be two complex structures compatible with $\mathcal L$ such that $i$ is holomorphic for both structures. Let $x\in L$. Let $\phi_1:\; U_1\rightarrow V_1\times T_1$ and $\phi_2:\; U_2\rightarrow V_2 \times T_2$ be two charts of neighborhoods of $x$ in the structures $\mathcal L_{H}^1$ and $\mathcal L_{H}^2$ respectively. Let us denote by $J_1$ and $J_2$ the complex structures of $V_1\subset \mathbb C^d$ and $V_2\subset \mathbb C^d$ respectively.  

We have:
\[ T_xi\circ T\phi_1\circ J_1 = J\circ T_x i \circ T\phi_1 \quad \mathrm{and}\quad T_xi \circ T\phi_2\circ J_2 = J\circ T_x i \circ T\phi_2.\]

Consequently:
\[ (T_x \phi_2)^{-1} \circ T_x\phi_1 \circ J_1 = J_2.\]

In other worlds, the coordinate change $\phi_2^{-1}\circ \phi_1$ is holomorphic and so by maximality of $\mathcal L_H^1$  and $\mathcal L_H^2$, these two structures are equal.  

\noindent \underline{Existence}

Let us construct the holomorphic structure of $L$. Let $x\in L$. Via a chart of $M$, we may identify a neighborhood of $i(x)\in M$ to an open subset $V$ of $\mathbb C^n$, such that $i(x)$ is $0$ and $Ti(T_x \mathcal L)$ is $(\mathbb C^d\times \{0\})\cap V$. Let $\phi:\; U\rightarrow W\times T$ be a chart of a neighborhood of $x\in L$. We may suppose $U$ sufficiently small such that $i(U)$ is included in $V$ and such that for every $t\in T$, the set $i\circ \phi^{-1}(W\times \{t\})$ is the graph of a function from an open subset $W_t$ of $\mathbb C^d$ into $\mathbb C^{n-d}$.
By restricting $T$, we may suppose the existence of an open neighborhood $W'$ of $0$ in the intersection $\cap_{t\in T} W_t$.

Let $U'$ be the open subset of $L$ formed by the point $x'\in U$ such that the projection of $i(x')$ by $p:\; \mathbb C^n \rightarrow \mathbb C^d\times \{0\}$ is in $W'$. We notice that $U'$ contains $x$ and that 
\[\phi':\; U'\rightarrow W'\times T\]
\[ x'\mapsto (p\circ i(x), \phi_2(x'))\]
is a chart of $\mathcal L$, with $\phi_2$ the second coordinate of $\phi$.

Therefore one can easily show that the family of maps $\phi'$ constructed in such a way is a atlas of complex lamination.  
 \end{proof}

Therefore the complex structure of a $C^1$-persistent complex lamination may change, but for our purpose the complex structure remains the same if the lamination is endowed with a holomorphic tubular neighborhood:

\begin{prop}\label{J3}
Let $(L,\mathcal L)$ be a complex lamination immersed into a complex manifold $M$. Suppose that $(L,\mathcal L)$ is endowed with a holomorphic tubular neighborhood $(F,\mathcal F,I,\pi)$. Let  $i'$ be a $C^1$-immersion of $(L,\mathcal L)$ equal to the composition of $I$ with a $C^1$-section of $(F,\mathcal F)\rightarrow (L,\mathcal L)$. If for every $x\in L$, $Ti'(T_x\mathcal L)$ is $J$-invariant, then $i'$ is holomorphic.\end{prop}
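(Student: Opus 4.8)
The statement of Proposition \ref{J3} is a refinement of Proposition \ref{J2}: the latter produces \emph{some} complex structure $\mathcal L_{\mathcal H}$ compatible with $\mathcal L$ making $i'$ holomorphic, but a priori this structure need not coincide with the given one $\mathcal L$. The point of Proposition \ref{J3} is that the presence of a holomorphic tubular neighborhood $(F,\mathcal F,I,\pi)$ forces $\mathcal L_{\mathcal H}=\mathcal L$. The plan is therefore to apply Proposition \ref{J2} to obtain $\mathcal L_{\mathcal H}$, and then to show that the identity map $(L,\mathcal L)\to(L,\mathcal L_{\mathcal H})$ is holomorphic (hence a biholomorphism, by symmetry or by the inverse being a $C^1$-morphism of laminations that is tangentially a linear isomorphism), which is exactly the assertion that $i'$ is holomorphic for the \emph{original} structure $\mathcal L$.

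\textbf{Key steps.} First I would write $i'=I\circ\sigma$ with $\sigma$ a $C^1$-section of $\pi:(F',\mathcal F)\to(L,\mathcal L)$, and note that $\pi\circ\sigma=\mathrm{id}_L$. Since $\pi$ is a holomorphic submersion of laminations and $I$ a holomorphic immersion, it suffices to show that $\sigma:(L,\mathcal L)\to(F',\mathcal F)$ is holomorphic; then $i'=I\circ\sigma$ is a composition of holomorphic maps. Second, the tangential differential $T\sigma$ maps $T_x\mathcal L$ into $T_{\sigma(x)}\mathcal F$, and since $\pi\circ\sigma=\mathrm{id}$ we have $T\pi\circ T\sigma=\mathrm{id}$ along the leaves; because the fibers of $\pi$ are (by the tubular neighborhood axioms) the normal directions and $\mathcal F$ is a complex lamination with $J$-linear $T\pi$, the image $T\sigma(T_x\mathcal L)$ is a complement, inside $T_{\sigma(x)}\mathcal F$, to the fiber direction $\ker T\pi$. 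The hypothesis that $Ti'(T_x\mathcal L)=TI\,(T\sigma(T_x\mathcal L))$ is $J$-invariant, combined with the fact that $TI$ is $J$-linear and injective, yields that $T\sigma(T_x\mathcal L)$ is a $J$-invariant subspace of $T_{\sigma(x)}\mathcal F$. Third — and this is the crux — in a holomorphic chart $(V\times T)$ of $\mathcal F$ adapted so that $\ker T\pi$ is $\{0\}\times\mathbb C^{n-d}$ and the base direction is $\mathbb C^d\times\{0\}$, the section $\sigma$ is locally the graph $x\mapsto(x,u(x))$ of a $C^1$ map $u$ with values in $\mathbb C^{n-d}$ depending on the leaf-coordinate in $\mathbb C^d$ (the transverse coordinate $t$ being locally constant), and the $J$-invariance of the tangent space to the graph at every point is precisely the Cauchy--Riemann equation $\bar\partial u=0$; hence $u$ is holomorphic in the leaf direction, so $\sigma$ is a holomorphic section, and we are done.

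\textbf{Main obstacle.} The genuinely substantive point is the last step: translating ``the tangent plane to the graph of $u$ is $J$-invariant at every point'' into ``$u$ satisfies the Cauchy--Riemann equations.'' This is a pointwise linear-algebra computation — the tangent space to the graph of $u$ at a point is $\{(v,Du\cdot v):v\in\mathbb C^d\}$, and requiring this to be stable under $J=(i,i)$ forces $Du\cdot(iv)=i(Du\cdot v)$ for all $v$, i.e. $Du$ is complex-linear, i.e. $\bar\partial u=0$ — but one must be slightly careful that the ambient $J$ on $\mathcal F$, in the chosen chart, really is the standard one on the relevant coordinates, which is guaranteed because $(F,\mathcal F,I,\pi)$ is a \emph{holomorphic} tubular neighborhood and $\pi$ is a holomorphic submersion, so such adapted holomorphic charts exist. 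A secondary bookkeeping issue is ensuring that being holomorphic leafwise (with continuous dependence on the transverse parameter, in the sense of the paper's definition of $C^{\mathcal H}$-morphism) is exactly what is needed for $\sigma$ to be a $C^{\mathcal H}$-morphism of laminations; this is immediate from the definitions in Section 1.2 once leafwise holomorphicity is established, since $\sigma$ is already $C^1$ and the transverse component is locally constant.
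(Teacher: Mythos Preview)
Your argument is correct and coincides with the paper's proof: both reduce to writing $i'$ locally, via the holomorphic trivialization furnished by the tubular neighborhood, as a graph map $y\mapsto(y,\sigma(y))$, and then observe that $J$-invariance of the tangent space to the graph forces $T\sigma$ to be $\mathbb C$-linear (the computation $J\cdot(v,D\sigma\cdot v)=(Jv,D\sigma\cdot Jv)$), which is exactly the Cauchy--Riemann condition. The detour through Proposition~\ref{J2} in your Plan is unnecessary---your Key steps already give the direct argument, which is what the paper does.
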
  

\begin{proof} Since $i'$ is differentiable, it is sufficient to show that for any $x\in L$, for any $u\in   T_x\mathcal L$:

\[ T_x i'(J u)= J\circ T_x i'(u). \]
Since $i'$ is equal to the composition of a $C^1$-section with a holomorphic tubular neighborhood $I$, there exists a plaque $\mathcal L_x$ of $x\in L$ and an open neighborhood $U$ of $i'(x)$ such that $U$ is biholomorphic to $\mathcal L_x\times V$, where $V$ is an open supset of $\mathbb C^p$ and in such a trivialization:
\[ i'_{|\mathcal L_x}:\; y\in \mathcal L_x\mapsto (y,\sigma(y)).\]
Since $i'(U)$ has its tangent space $J$-invariant, for any $u\in T_x U$ there exists $v\in T_x U$ such that:
\[ J\circ (u, T_x \sigma(u))= (v, T_x \sigma(v)).\]

This implies that $T_x \sigma$ commutes with $J$ and so that $T_x i'$ commutes with $J$.
\end{proof} 
 
Each complex lamination of Examples \ref{suspension},  \ref{endomorphism:variables}, \ref{julia:fibre} or
\ref{henon+preorbit} is endowed with a holomorphic tubular neighborhood. Also each of them is preserved by a holomorphic dynamics and satisfies the hypotheses of Theorems \ref{Main} or \ref{Maincontract}. Thus we can apply to each them the three above propositions.

 Let us proceed in detail for Example \ref{julia:fibre} about fibered hyperbolic horseshoes.  
In this example, we saw that the restriction of the lamination $(L,\mathcal L)$ to some subset $L':= L\cap (\mathbb A'\times \mathbb C^2)$ is $C^1$-persistent for any holomorphic perturbation $f'$ of the dynamics. In other words, there exists a $C^1$-embedding $i(f')$ of $(L',\mathcal L_{|L'})$ that $f'$ lets invariant \big($f'\big(i(f')(L')\big)=i(f')(L')\big)$. Thus $\Lambda^*$ is equal to $L'$, and the embedding $i(f')$ 
is holomorphic for a possibly non canonical complex structure on the lamination  $(L',\mathcal L_{|L'})$, by Propositions \ref{J1} and \ref{J2}. But, since $i(f')$ is the composition of $I$ with a $C^1$-section of a holomorphic tubular neighborhood $(F,\mathcal F,I,\pi)$ of $(L,\mathcal L)$ (induced by the bundle $\mathbb A\times \mathbb C^2\rightarrow \mathbb A$), this complex structure is equal to the initial one.

\begin{ques} Find a non trivial holomorphic perturbation of the suspended H\'enon map of Example \ref{suspension}.\end{ques}
 
Let us now give an example of complex submanifold, normally expanded by a biholomorphic map, which is differentially persistent, but for which the complex structure is not persistent. 
   
%\subsection{Persistence of complex laminations by deformation}
%The $\mathcal H$-persistence of compact normally expanded laminations can fail when the lamination is not endowed with a tubular neighborhood, as in the following example:

\begin{exem}[Hopf surface]\label{hopf}
Let $\tilde M:= \mathbb C\times  \big(\mathbb C^2\setminus \{(0,0)\}\big)$ and $\alpha\in \mathbb C$ with modulus in $]0,1[$. Let $g$ be the following biholomorphic map of $\tilde M$:
\[g\;:\; (t,z_1,z_2)\mapsto (t,\alpha\cdot z_1+t\cdot z_2,\alpha\cdot z_2).\]
We remark that the action on $\tilde M$ of the group $\mathcal G:= \{g^n,\; n\in \mathbb Z\}$ is proper, discontinuous and fixed-point-free on $\tilde M$. Thus the quotient $M:= \tilde M/\mathcal G$ is a complex manifold.

For $s\in \mathbb C$, the following holomorphic endomorphism of $\tilde M$:
\[\tilde f_s\; :\; (t,z_1,z_2)\mapsto (2(t-s),z_1,z_2)\]
is $\mathcal G$-equivariant. Thus there exists $f_s\in End^{\mathcal H}(M)$ such that the following diagram commutes:
\[\begin{array}{rcccl}
&\tilde M&\stackrel{\tilde f_s}{\rightarrow} &\tilde M&\\
\pi&\downarrow&&\downarrow& \pi\\
&M&\stackrel{f_s}{\rightarrow} &M&\end{array},\]
with $\pi\; :\; \tilde M\rightarrow M$ the canonical projection. We notice that the (Hopf) surface $M_s:= \pi\big(\{s\}\times (\mathbb C^2\setminus \{(0,0)\})\big)$ is normally expanded by $f_s$ since $\tilde f_s$ normally expands $\{s\}\times (\mathbb C^2\setminus \{(0,0)\})$. 

Actually, for every  $r\ge 1$ and every small $s$, $M_s$ is the unique surface $C^r$-close to $M_0$ that $f_s$ preserves. Nevertheless, for $s\not=0$, the Hopf surface $M_s$ is not biholomorphic to $M_0$ (see \cite{Koda}, ex. 3, p.23).
Thus, $M_0$ is a compact complex submanifold of $M$, which is $1$-normally expanded by $f_0$ and hence $C^1$-persistent  but not holomorphically persistent.

Nevertheless, by Theorem \ref{defo-exp} the torus $M_0$ is persistent by deformation.     
\end{exem} 

Let us now give an example of a normally expanded submanifold by an endomorphism, which is persistent by deformation, but not differentially neither biholomorphically persistent.
 
\begin{exem} In the above Example, %\ref{hopf}, 
the Hopf surface $M_0$ preserved and $0$-normally expanded by the endomorphism of induced by:
\[\tilde f_s\; :\; (t,z_1,z_2)\mapsto (4(t-s),10 z_1,10 z_2)\]
the submanifold $M_0$ is persistent by deformation, but not differentially nor biholomorphically persitent.\end{exem}
 
 Also Theorems \ref{defo-exp} and \ref{defo-hyp} enable us to construct new laminations as in the following example.
\begin{exem}
Let $P$ be a polynomial function of $\mathbb C$ which expands some repulsive compact subset $K$ of $\mathbb C$. For instance $P$ can be Collet-Eckmann or a hyperbolic quadratic polynomial function. Let $f\; :\; (z,(z_i)_i)\in \mathbb C\times \mathbb C^n\mapsto (P(z),0)$, for $n\in \mathbb N$. The $n$-dimensional embedded lamination $(L,\mathcal L)=K\times \mathbb C$ is preserved and $1$-normally expanded. 

As $K$ does not contain more than one point, $(L,\mathcal L)$ is endowed with a tubular neighborhood. Thus, by Theorem \ref{Main} and Propositions \ref{J1}, \ref{J2} and \ref{J3},   the $n$-dimensional lamination $(L',\mathcal L')=K\times B(0,R)$ is holomorphically persistent, for $R>0$ and with $B(0,R)$ the ball of $\mathbb C^n$ centered at $0$ of radius $R$.

This means that for every holomorphic map $f'$ close to $f$, there exists a holomorphic immersion $i(f')$ of $(L',\mathcal L')$ into $\mathbb C^{n+1}$, close to $i_{|L'}$ and such that for every $k\in K$, $f'$ sends $i(f')(\{k\}\times \mathbb C^n)$ into $i(f')(\{P(k)\}\times B(0,R))$.

On the other hand, by Theorem \ref{defo-exp}, the lamination $(L',\mathcal L')$ is persistent by deformation. This means that for a complex analytic family of holomorphic endomorphisms $(f_t)_{t\in B}$ of $\mathbb C^{n+1}$ such that $f_{t_0}=f$ for some $t_0\in B$, there exists a neighborhood $B_0$ of $t_0$ such that $B_0$ is included in $V_f$ and $i\; :\; (t,x)\in B_0\times L'\mapsto i(f_t)(x)\in \mathbb C^{n+1}$ is a holomorphic immersion for the a complex laminar structure $\mathcal D$ on $B_0\times L'$. By uniqueness of the structure $\mathcal D
$ is $B_0\times \mathcal L'$.

With $n=0$ and with $B$ the open set of parameters $c\in \mathbb C$ such that $P_c(z)=z^2+c$ is hyperbolic, the above discussion implies the well known result that $\cup_{c\in B} \{c\}\times J_c$ is endowed with a complex one-dimensional structure of lamination, where the Julia set $J_c$
 of $P_c$ is the transverse space.\end{exem}

\section{Proof of differentiable persistences of normally contracted laminations}
In this section we prove Theorem \ref{Maincontract}.
\subsection{The setting}
We fix $r\ge 1$. Let us denote by $K$ the closure of $L'$. %We endow $\mathcal L$ with the Riemannian metric induced by the one of $M$ via $i$. %Thus, for $\epsilon$ small enough, for every $x\in K$, the union $\mathcal L_x^\epsilon$ of containing $x$ and with diameter less than $\epsilon$ (for this Riemannian metric), is a submanifold of depends $C^r$ continuously of $x$ as a sub   

Let $(F,\mathcal F,I,\pi)$ be a $C^r$-tubular neighborhood of the immersed lamination $(L,\mathcal L)$ into $M$.
We identify $L$ to the zero section of $F$. We endow $i^*TM$ with a norm satisfying the normal contraction inequality. We endow $F$ with the norm associating to a vector $v\in F$ the one of the projection of $T_0I(v)$ into $E^s$ parallelly to $T\mathcal L$ in $i^*TM$.

Let $d$ be the dimension of the leaves of $\mathcal L$. 
%Let $\chi$ be a distribution of $d$-planes tangent to the leaves of the lamination $(F,\mathcal F)$, such that the restriction of $\chi $ to $L$ is $T\mathcal L$.

We will always suppose $\eta>0$ small enough such that for every $x$ in a neighborhood of $K\cup f^*(K)$:\begin{itemize}
\item[-]  the ball $B_{F_x}(0,\eta)$ centered at $0$ and with radius $\eta$ is diffeomorphically sent by $I$ to  a submanifold  $ F_x^\eta$,
\item[-]  the union $\mathcal L_x^\eta$ (resp. $\mathcal F_x^\eta$) of plaques of $\mathcal L$ (resp. $\mathcal F$) containing $x$ and with diameter less than $\eta$ is a plaque. \end{itemize}

\subsection{The algorithm}  
The idea of the proof is to use the following lemma which parametrizes the image by the dynamics of perturbations of the immersed lamination $\mathcal L$.

\begin{lemm}\label{lem6'}
For any small $\eta>0$, there exist an open neighborhood $V$ of $f^*(K)$, a neighborhood $V_i^0$ of $i\in Mor^r(\mathcal L,M)$, a neighborhood $V_f$ of $f\in End^r(M)$ and a continuous map
\[S^0\;:\; V_f\times V_i^0\rightarrow Im^r(\mathcal L_{|V},M)\]
satisfying:
\begin{enumerate}
\item the morphism $S^0(f,i)$ is equal to $i_{|V}$,

\item \label{preconc4'} for all $x\in V$, $i'\in V_i^0$ and $f'\in V_f$,
 the image of $i'(\mathcal L_{f^{*^{-1}}(x)}^{\eta})$ by $f'$ intersects transversally $F_{x}^{\eta}$ at a unique point $S^0(f',i')(x)$.
\end{enumerate}

\end{lemm}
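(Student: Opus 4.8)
The plan is to construct $S^0$ by solving, for each point $x$ near $f^*(K)$, a transverse intersection problem that is uniformly non-degenerate at the unperturbed data $(f,i)$ and then to apply the implicit function theorem in a fibered way. Concretely, fix a small $\eta>0$ as in Subsection 4.1, so that the plaques $\mathcal L^\eta_{f^{*-1}(x)}$ and the fiber-discs $F^\eta_x=I(B_{F_x}(0,\eta))$ are genuine embedded submanifolds varying continuously with $x$. The key geometric input is that $f$ is $r$-normally contracting over $f^*$: the image $f\circ i(\mathcal L_{f^{*-1}(x)})$ is the leaf plaque through $i(f^*\circ f^{*-1}(x))=i(x)$, whose tangent space at $i(x)$ is $T_x\mathcal L$, while $F^\eta_x$ has tangent space (a representative of) $E^s_x$ at the same point; since $i^*TM=E^s\oplus T\mathcal L$ in the contracting case, these two tangent spaces are complementary, i.e. the intersection at $i(x)$ is transverse and consists of exactly the single point $i(x)$ inside the chosen $\eta$-sized pieces. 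That is exactly conclusion (1) and the $(f,i)$-instance of conclusion (2).

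First I would set up the right local coordinates. Using the tubular neighborhood $(F,\mathcal F,I,\pi)$, a neighborhood of $i(x)$ in $M$ is modeled on $\mathcal L^\eta_x\times B_{F_x}(0,\eta)$, with $F^\eta_x=\{x\}\times B_{F_x}(0,\eta)$ a ``vertical'' disc and the projection $\pi$ the first coordinate. A perturbed immersion $i'$ close to $i$ sends the plaque $\mathcal L_{f^{*-1}(x)}^\eta$ to a $C^r$-graph over the corresponding unperturbed plaque; composing with a perturbed dynamics $f'$ close to $f$ gives a $C^r$-map $g_{x}(f',i')\colon \mathcal L_{f^{*-1}(x)}^\eta\to \mathcal L^\eta_x\times B_{F_x}(0,\eta)$, whose first component $\pi\circ g_{x}$ is, at $(f,i)$, a $C^r$-diffeomorphism onto $\mathcal L^\eta_x$ (it is the plaque map of $f^*$, or rather its inverse branch), hence remains a diffeomorphism for $(f',i')$ in a small neighborhood by openness of diffeomorphisms in the $C^r$-topology on the compact closure of these plaques. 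Then $S^0(f',i')(x)$ is characterized as the image under $g_x(f',i')$ of the unique preimage of $x$ under $\pi\circ g_x(f',i')$; this is a closed-form solution, no implicit function theorem needed, and it manifestly lies in $F^\eta_x$, is unique, and the intersection is transverse because transversality is an open condition and holds at $(f,i)$ uniformly over the compact set $\mathrm{cl}(f^*(K))$.

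Next I would check the required regularity and continuity. The map $x\mapsto \big(\mathcal L^\eta_{f^{*-1}(x)},\mathcal L^\eta_x, F^\eta_x\big)$ is continuous in the natural sense (here the hypothesis that $f^*$ is an \emph{open, injective} immersion on $D\supset L'$ and $L'$ precompact is used to get a well-defined continuous inverse branch $f^{*-1}$ near $f^*(K)$, after possibly shrinking $V$); composition and inversion are continuous in the $C^r$-compact-open topology on the relevant equivalence classes $Mor^r$, $Im^r$, $End^r$ as set up in Section \ref{Top}; so $(f',i',x)\mapsto S^0(f',i')(x)$ is continuous, and for each fixed $(f',i')$ it is a $C^r$-immersion in $x$ because it is a composition of $C^r$-maps along the leaves with fiber-wise full-rank differential (the graph structure guarantees the $x$-derivative is injective). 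Thus $S^0\colon V_f\times V_i^0\to Im^r(\mathcal L_{|V},M)$ is well-defined and continuous, which is conclusion (2), and $S^0(f,i)=i_{|V}$ is conclusion (1).

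The main obstacle I anticipate is not the transversality itself — that is soft once the splitting $i^*TM=E^s\oplus T\mathcal L$ is used — but the \emph{bookkeeping of neighborhoods and the choice of $V$} so that everything is simultaneously valid: one needs a single open $V\supset f^*(K)$ on which $f^{*-1}$ has a continuous (indeed $C^r$) inverse branch taking values where the $\eta$-plaques behave well, one needs the $C^r$-smallness of $i'-i$ and $f'-f$ to be controlled uniformly over the (precompact, so essentially compact) families of plaques in play, and one needs to confirm that the resulting $S^0(f',i')$ really defines an element of $Im^r(\mathcal L_{|V},M)$ in the sense of the topology of Section \ref{Top}, i.e. that it is in the correct equivalence class and the charts glue. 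These are the points where the precompactness of $L'$ and the openness/injectivity/properness of $f^*$ are genuinely needed, and where I would spend the most care; the underlying analytic content is a routine fiber-wise implicit-function/graph-transform argument.
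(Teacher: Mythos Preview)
Your proposal is correct and follows essentially the same approach as the paper. The paper phrases the $C^r$-regularity step as ``holonomy along the $C^r$-foliation $(F^\eta_{x''})_{x''\in\mathcal L^\eta_{x'}}$ from $i(\mathcal L^\eta_{x'})$ to the transverse section $f'\circ i'(\mathcal L_{f^{*-1}(x')})$,'' which is exactly your ``invert the first component $\pi\circ g_x(f',i')$'' in tubular coordinates; both then pass to a finite cover of the compact set $cl(L')$ to obtain uniform neighborhoods $V$, $V_f$, $V_i^0$, precisely the bookkeeping you flagged as the main point of care.
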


\begin{proof}
Let us remind that $f^*$ is an injective immersion from $\mathcal L_{|D}$ into $\mathcal L$. Thus, we may suppose $\eta>0$ small enough such that $f'$ sends diffeomorphically $i'(\mathcal L_{f^{*-1}(x)}^\eta)$ to a submanifold transverse at a unique point $v$ to the submanifold $F_{x}^{\eta}$, for all $x$ close to $f^*(K)$, $i'$ $C^r$-close to $i$ and $f'$ $C^r$-close to $f$.

\begin{figure}[h]
    \centering
        \includegraphics{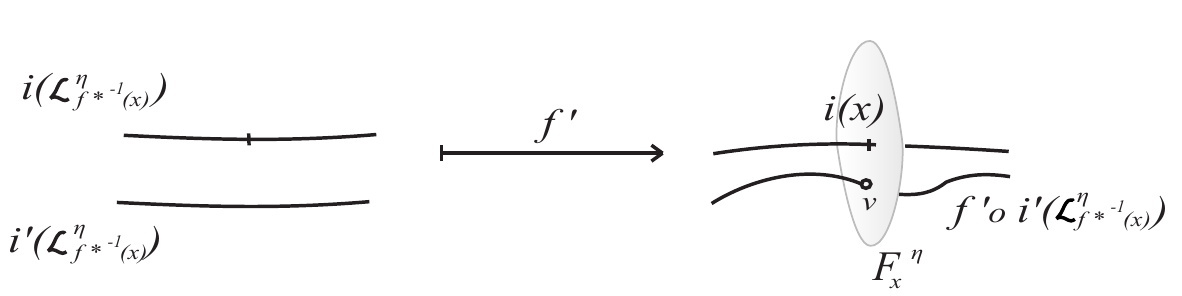}
        
%[width=7cm]
    \caption{Definition of $S^0$.}
    \label{defS}
\end{figure}

Let $S^0(f',i')(x)$ be the point $v$.

%Writing this intersection point in the form:
%\[f'(v)=i'(x),\]
%\[\mathrm{we\; define}\; S^0(f',i')(x):=v\in \mathcal F_{x}^{\eta}.\]

Such a map $S^0$ satisfies conclusions 1 and \ref{preconc4'} of Lemma \ref{lem6'}.
Let us show that $S^0$ takes continuously its values in the set of morphisms from the lamination $\mathcal L$ restricted to $V$ into $M$.   

A small open neighborhood $U$ of $x$ has the closure of its backward image by $f^*$ included in a distinguish subset of $\mathcal L$. Let $\mathcal L_y$ be some of the plaques provided by a chart of this distinguish subset, containing ${y\in f^{*^{-1}}(U)}$. 

%The charts of $\mathcal L$ provide a neighborhood $U_x$ of any $x\in K$ and a continuous family of proper $C^r$-plaques 
%$(\mathcal L_{y})_{y\in f^{*^{-1}}(U)}$.
We notice that the submanifold $f'\circ i'( \mathcal L_{y})$ depends $C^r$-continuously on ${y\in f^{*^{-1}}(U)}$, $i'$ $C^r$-close to $i$, and $f'$ $C^r$-close to $f$.
  
%Let $x\in K$ and let $(U_y,\phi_y)\in \mathcal L$ be a chart of a neighborhood of $y:=f^{*^{-1}}(x)$.
%We may suppose that $\phi_y$ can be written in the form:
%\[\phi_y\;: \; U_y\rightarrow \mathbb R^{d}\times T_y\]
%where $T_y$ is a locally compact metric space. Let $(u_y,t_y)$ be defined by
%\[\phi_y(y)=:(u_y,t_y).\]
%We remark that $\mathbb R^{d}$ is $C^r$-immersed by $\psi:= u\in \mathbb R^{d} \mapsto f\circ i\circ \phi^{-1}(u,t_y)$.
%
%Thus, for precompact neighborhood $V_{u_y}$ of $u_y$ in $\mathbb R^d$, for $f'$ close to $f$ and $i'$ close to $i$, the map $f'\circ i'$ immerses the plaque $\mathcal L_{t}:= \phi^{-1}_y(V_{u_y}\times \{t\})$. This immersion depends $C^r$-continuously on $f'$, $i'$ and $t'$.

For $\eta>0$ small enough, the manifolds $( F_{x''}^{\eta})_{x''\in \mathcal L_{x'}^{\eta}}$
 are the leaves of a $C^r$-foliation, which depends %\footnote{To be very rigorous, for any $x\in L'$, there exists a neighborhood $U$ of $x\in L$ and a neighborhood $V$ of $i(x)$, such that for $x'\in U$, the submanifolds $( F_{x''}^{\eta})_{x''\in \mathcal L_{x'}^{\eta}}$ are open subset of leaves of a foliation on $V$, which is $C^r$-isomorphic and close to the foliation associated to a point $y\in U$ close to $x$.}
  $C^r$-continuously on $x'\in V$:  the foliation associated to a point $x'_1$ close to $x'$ is sent to the one of $x'$ by a $C^r$-diffeomorphism of $M$ close to the identity. 
 
For $\eta>0$ sufficiently small and then $f'$ and $i'$ $C^r$-close enough to $f$ and $i$, each submanifold $ F_{x''}^{\eta}$
 intersects transversally at a unique point  the submanifold $f'\big(i'(\mathcal L_{f^{*-1}(x')})\big)$, where $x'$ belongs to $U$ and $x''$ to $\mathcal L_{x'}^{\eta}$. %to $\cup_{x'\in U_x} \mathcal L_{x'}^\eta$.
As we know $S^0(f',i')(x'')$ is this intersection point.
 
In other words, $S^0(f',i')_{|\mathcal L_{x'}^{\eta}}$ is the composition of $i$ with the holonomy along the $C^r$-foliation $(F_{x''}^{\eta})_{x''\in \mathcal L_{x'}^{\eta}}$, from $i(\mathcal L_{x'}^{\eta})$ to the transverse section $f'\circ i'\big(\mathcal L_{f^{*-1}(x')}\big)$, %where $t'$ is the second coordinate of $\phi_y\circ f^{*^{-1}}(x')$, for every $x'\in U$.
 
 Thus, the map $S^0(f',i')$ is of class $C^r$ along the $\mathcal L$-plaques contained in $U$.

 As these manifolds vary $C^r$-continuously with $x'\in U$,  the map $S^0(f',i')_{|U}$ is a
 $\mathcal L_{|U}$-morphism into $M$.
 
 Also by transversality, the map $S^0_{|U}$ takes its values in the subset of immersions. 
 
  As these foliations and manifolds also depend $C^r$-continuously on $f'$ and $i'$, there are neighborhood $V_f$ of $f\in End^r(M)$ and $V_i^0$ of $i_{|L'}\in Im^r(\mathcal L_{|L'},M)$ such that the map
  \[S^0_{|U}\;:\;(f',i')\in V_f\times V_i^0 \mapsto S^0(f',i')_{|U}\]
is well defined and continuous into $Im^r(\mathcal L_{|U}, M)$.

  As $cl(L')$ is compact, it has a finite open cover of  by such open subsets $U$ 
  on which the restriction of $S^0$ satisfies the above regularity property. 
  By taking $V_i^0$ and $V_f$ small enough to be convenient for all the subsets of this finite cover, we get the continuity of the following map:
  \[S^0\;:\;(f',i')\in V_f\times V_i^0 \mapsto S^0(f',i')\in Im^r(\mathcal L_{| V}, M).\]
where $ V$ is the union of the open cover of $f^*(K)$.

\end{proof}
By conclusion 2 of Lemma \ref{lem6'}, for any $(f',i')\in V_f\times V_i^0$, we shall identify $S^0(f',i')$ with the $C^r$ section of the bundle: $(F,\mathcal F)\rightarrow (L,\mathcal L)$ restricted to $V$.

For a function $\rho \in Mor^r (\mathcal L,[0,1])$, with compact support included in $V$ and equal to $1$ on a neighborhood $V'$ of $L'$, we define by using the above identification:

\[S\; :\; V_f\times V_i^0\rightarrow Im^r(\mathcal L,M)\]
\[(f',i')\mapsto S_{f'}(i'):= \left\{\begin{array}{cl} \rho(x)\cdot S^0(f',i')(x)& \mathrm{if}\; x\in V\\
i(x)& \mathrm{else}\end{array}\right.\]
For $f'\in V_f$, we want to show that $S_{f'}$ has a unique fixed point $i(f')$.
 
The space $\Gamma$ of $C^r$-sections of $(F,\mathcal F)\rightarrow (L,\mathcal L)$ with support in $V$ is a Banach space. Unfortunately $S$ is not a contraction for such a space. Nevertheless it is a contraction for the $C^0$-uniform norm $d_{C^0}$ of $\Gamma$ induced by the norm of $F$.

\subsection{The $C^0$-contraction}
By definition of $S$, the $C^0$-contraction of $S^0$ implies the one of $S$.

Let $x\in f^{*^{-1}}(V)\subset D$. We shall identify both foliations $(F_{x'}^\eta)_{x'\in \mathcal L_{x}^\eta}$ and $(F_{y'}^\eta)_{y'\in \mathcal L_{f^{*}(x)}^\eta}$ to $\mathbb R^{n-d}\times \mathbb R^d$.
Let $p_1\;:\; \mathbb R^{n-d}\times \mathbb R^d\rightarrow \mathbb R^{n-d}$ and $p_2\;:\; \mathbb R^{n-d}\times \mathbb R^d\rightarrow \mathbb R^d$ be the canonical projections.

For $f'\in V_f$, $i'\in V_i^0$ and $x\in f^{*-1}(V)$, let $y:= p_2\circ  f'\circ i'(x)$. The point $z=S_{f'}^0(i')(y)$ is a solution of the equation:
\[f'\circ i'(x)=z.\]
 Thus, the point $z=S_{f'}^0(i')(y)$ is also a solution of:
\[p_1\circ f'\circ i'(x)=p_1(z).\]
As $S^0(f',i')(y)$ belongs to $F_y^\eta$, this point is identified to $(v,y)\in \mathbb R^{n-d}\times \mathbb R^d$.

As $\mathcal L_{x}^\eta$ is precompact in $L$, the restriction $p_1\circ i'_{|\mathcal L_x^\eta}$ is identified to 
a vector of the Banach space
of $C^1$-bounded maps $C^1_b(\mathcal L^\eta_{x},\mathbb R^{n-d})$. 
 
Let us apply the implicit function theorem with the following $C^1$-map from a neighborhood of $(0,0)$:
\[\Psi_{x,f'}\; : \; C^1_b(\mathcal L_{x}^\eta,\mathbb R^{n-d})\times \mathbb R^{n-d}\rightarrow \mathbb R^{n-d}\]
\[(j,v)\mapsto p_1\circ f'\circ j(x)-v.\]

 We may assume that $(\Psi_{x,f'})_{x\in f^{*-1}(V),f'\in V_f}$ is locally a continuous family of $C^1$-maps, since the charts of $\mathcal L$ provide continuous identifications of the $\eta$-plaques centered at points of $f^{-1}(V)$. 
 
 Also, $\Psi_{x,f}(0,0)$ is zero for any $x\in f^{*-1}(V)$ and $\partial_{v} \Psi_{x,f'}=-id$ is invertible. 
 Thus, by precompactness
 of $f^{*-1}(V)$, the implicit function theorem implies that $i'\in \Gamma \mapsto S^0_{f'}(i')(y)$ is of class $C^1$, for any $y\in V$ and $f'$ close to $f$. Moreover, $\Psi_{x,f'}$ depends $C^1$-continuously on $x$ and $f'$.
The differential
 \[\partial_i (S^0(f,i)(y))(j)= -\partial_v\Psi(x,f)^{-1}\circ T\Psi(x,f)(j)=p_1\circ Tf\circ j(x)\]
is contracting by normal contraction. Thus, $\partial_i(S^0(f',i')(y))$ is also $\lambda$-contracting for $f'$ close to $f$ and $i'\in \Gamma$ close to $i$.

Thus, by the mean values theorem, $S^0(f',\cdot)$ and so $S_{f'}$ are $\lambda$-contracting on a fixed neighborhood of $i$, for every $f'\in V_f$.

Also $S_f$ sends the intersection of $V_i^0$ with the $\epsilon$-$C^0$-ball centered at $i$ into a $\lambda\cdot\epsilon$-ball, for any small $\epsilon>0$.

By continuity of $f'\mapsto S_{f'}$,  by restricting $V_f$, for every $f'\in V_f$, 
$S_{f'}$ sends $i$ into $B_{C^0}(i, (1-\lambda)\epsilon)$ and so $cl(B_{C^0}(i,\epsilon))\cap V_i^0$ into $B_{C^0}(i,\epsilon)\cap \Gamma$.

\subsection{The $1$-jet space}\label{r=1c}

Contrarily to what happened for the $C^0$-topology, the map $S_f$ is not necessarily contracting in the $C^1$-topology, even when $L$ is compact (and so $S=S^0$). However, we are going to show that the forward action of $Tf$ on the Grassmannian of $d$-planes of $T\mathcal F$ is contracting, at the neighborhood of the distribution induced by $T\mathcal L$. Let us describe how $Tf$ acts on $(F,\mathcal F)$. 

 By compactness of $cl(V)$, there exists $\epsilon>0$ such that the restriction of $I$ to any $\epsilon$-plaque $\mathcal F_{x}^\epsilon$ of $\mathcal F$, centered at $x\in cl(V)$, is a diffeomorphism onto its image which is open in $M$. We denote by $I^{-1}_x$ its inverse. Thus, by restricting $V_f$, on a neighborhood $U^*$ of the zero section of $F_{|f^{*-1}(V)}$, we can define, for every $f'\in V_f$:
 \[\hat f' \; :\; z\in U^*\mapsto I^{-1}_{f^*\circ \pi(z)}\circ f'\circ I(z),\]
which is a $C^r$-morphism from $\mathcal F_{|U^*}$ into $\mathcal F$.

Let us define a normed vector bundle structure on a neighborhood of $T\mathcal L$ in the Grassmannian of $d$-planes of $T\mathcal F$. Let $\chi_0$ be a continuous, horizontal distribution of $d$-planes of $(F,\mathcal F)\rightarrow (L,\mathcal L)$. By horizontal we mean that for every $y\in F$, the $d$-plane $\chi_0(y)$  of $T_y\mathcal F$ is sent onto $T_{|\pi(y)}\mathcal L$ by $T_{y}\pi$.

 Let $\Psi\; :\; (y,t)\in F\times \mathbb R\mapsto t\cdot y\in F$. The map $\Psi$ is a $C^r$-morphism of $(F,\mathcal F)\times \mathbb R$ onto $(F,\mathcal F)$. Let $\chi$ be the continuous, horizontal distribution defined by:
\[\chi(y)=\left\{\begin{array}{cl} \partial_y \Psi_{(y,\|y\|)}\Big(\chi\big(\frac{y}{\|y\|}\big)\Big)& \mathrm{if}\; y\not=0,\\
T_y \mathcal L& \mathrm{if} \; y=0.\end{array}\right.\]
For all $x\in L$ and $y\in F_x$, the vector space $T_x\mathcal L^*\otimes F_x$ of linear maps from $T_x\mathcal L$ into $F_x$ is isomorphic to the space $\chi(y)^*\otimes F_x$, by the isomorphism which associates to $l\in T_x\mathcal L^*\otimes F_x$ the map $l\circ T\pi_{|\chi(y)}$.

As $F_x$ is canonically isomorphic to the subspace of $T_y\mathcal F$ tangent to the fiber $F_x$, via the graph map we shall identify $\chi(y)^*\otimes F_x$ (and so $T_x\mathcal L^*\otimes F_x$) to a neighborhood of $\chi(y)$ in the Grassmannian of $d$-planes of $T_y\mathcal F$. We denote by $P^1_y$ such a vector space that we endow with the norm subordinate to those of $T_x\mathcal L$ and $F_x$. 
We denote by $P^1$ the vector bundle over $F$ whose fiber at $y\in F$ is $P^1_y$.\\

 As $f^*$ is an immersion and $f$ preserves the immersion of $(L,\mathcal L)$, for $f'$ close enough to $f$ and for
 all small $z\in (f^*\circ \pi)^{-1}(V)$, for any $d$-plane $l\in P^1_z$, the image by $T\hat f'$ is a small $d$-plane $l'\in P^1_{\hat f'(z)}$.
  
Let us show the following lemma:
\begin{lemm}\label{contracP1}
For all small $\epsilon>0$ and then $V_f$ small enough, for all $f'\in V_f$, $z\in (f^*\circ \pi)^{-1}(V)$ and $l\in P^1_{z}$ both with norm not greater than $\epsilon>0$, the norm of $\phi_{f'z}(l):=l'\in P^1_{\hat f(z)}$ is less than $\epsilon$. Moreover, the map $\phi_{\hat f'z}$ is $\lambda$-contracting.\end{lemm}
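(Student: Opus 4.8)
The plan is to work in the fiber bundle $P^1 \to F$ and exploit the fact that the action of $T\hat f'$ on $d$-planes, read through the chosen normed coordinates, is asymptotically governed by the action of $Tf$ on $T_x\mathcal{L}^*\otimes F_x$, which is a genuine linear contraction by normal contraction. First I would fix a point $z\in(f^*\circ\pi)^{-1}(V)$ and a plane $l\in P^1_z$, both of norm at most $\epsilon$, and write out $\phi_{f',z}(l)=l'$ explicitly in the graph coordinates: $l$ is the graph of a linear map $\mathbb{R}^d\to F_{\pi(z)}$ over $\chi(z)$, and $T\hat f'(z)$ applied to this graph yields, after composing with $T\pi$ on the target $\chi(\hat f'(z))$, another graph. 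The key algebraic point is that $\hat f' = I^{-1}_{f^*\circ\pi(z)}\circ f'\circ I$, so $T\hat f'$ at $z=0$ is exactly $Tf$ in the splitting $T\mathcal{F}_0 = T\mathcal{L}\oplus F$, and the induced map on $P^1_0$ is $l\mapsto (p_{F}\circ Tf\circ \iota_l)\circ(p_{T\mathcal{L}}\circ Tf)^{-1}$, which by the normal contraction inequality (the norm on $F$ being precisely the one making $Tf$ contract $F$ more sharply than it acts on $T\mathcal{L}$) has operator norm at most $\lambda<1$ on $T\mathcal{L}^*\otimes F$.

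Next I would promote this zero-section estimate to a uniform one on a neighborhood. The map $(f',z,l)\mapsto \phi_{f',z}(l)$ is continuous (indeed $C^{r-1}$, or at least $C^0$ in all variables jointly) on a neighborhood of $\{f\}\times (\text{zero section})\times \{0\}$, by continuity of $\hat f'$ in $f'$ and of its derivative; the space $(f^*\circ\pi)^{-1}(cl(V'))$ is precompact since $K=cl(L')$ is compact and $f^*$ is proper, so I can use compactness to get uniform estimates. Specifically, because $\|\phi_{f,0}(l)\| \le \lambda\|l\|$ for all $l$ in the zero fiber, by continuity and compactness there is a neighborhood $V_f$ of $f$ and an $\epsilon>0$ such that for $f'\in V_f$, $\|z\|\le\epsilon$, $\|l\|\le\epsilon$ one has $\|\phi_{f',z}(l)\|\le \lambda'\|l\|$ with $\lambda<\lambda'<1$; in particular $\|\phi_{f',z}(l)\|\le\epsilon$, which gives the invariance claim. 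Renaming $\lambda'$ back to $\lambda$ (or fixing $\lambda$ slightly larger than the true contraction rate from the start, as the preceding sections do) yields both assertions.

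The main obstacle I anticipate is the bookkeeping around the ``horizontal'' distribution $\chi$ and the fact that $z$ ranges over the whole $\epsilon$-neighborhood of the zero section rather than just the zero section itself: one must check that $\phi_{f',z}$ is well-defined, i.e.\ that $T\hat f'(z)$ applied to a small graph over $\chi(z)$ is still a small graph over $\chi(\hat f'(z))$ and not a plane that has ``tilted past vertical'' relative to $T\pi$. This is exactly the sentence just before the lemma (``the image by $T\hat f'$ is a small $d$-plane $l'\in P^1_{\hat f'(z)}$''), and it follows because $f$ preserves the immersion of $(L,\mathcal{L})$ — so $Tf$ sends $T_x\mathcal{L}$ to $T_{f^*(x)}\mathcal{L}$ exactly, hence $T\hat f'(0)$ sends $\chi(0)=T_x\mathcal{L}$ into $\chi(0)$ at the target — together with continuity in $(f',z)$; the constant $\epsilon$ must be chosen small enough for this openness to hold uniformly, which is again a compactness argument. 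Once these domain issues are settled, the contraction estimate itself is just the linear-algebra computation above combined with a mean-value / continuity perturbation, so it is routine.
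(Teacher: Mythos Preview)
Your approach is essentially the paper's: write $\phi_{f',z}$ explicitly via the horizontal/vertical splitting $T\mathcal F=\chi\oplus F$, observe that at the zero section with $f'=f$ it reduces to the linear map $l\mapsto Tf_{|F}\circ l\circ (Tf^*)^{-1}$, which is $\lambda$-contracting by $r$-normal contraction, and then perturb by continuity and precompactness of $(f^*\circ\pi)^{-1}(V)$.

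One point to tighten. The perturbed estimate you write, $\|\phi_{f',z}(l)\|\le\lambda'\|l\|$, cannot hold as stated: away from the zero section (or for $f'\neq f$) the distribution $\chi$ is not $T\hat f'$-invariant, so $\phi_{f',z}(0)\neq 0$ in general, and your inequality at $l=0$ would force it to vanish. A bare continuity argument from $\|\phi_{f,0}(l)\|\le\lambda\|l\|$ only yields $\|\phi_{f',z}(l)\|\le\lambda\|l\|+\delta$, which is useless near $l=0$. The paper fixes this by perturbing the \emph{derivative} $\partial_l\phi_{f'z}$ instead: since $\phi_{f',z}(l)$ is algebraic in $l$ with coefficients continuous in $(f',z)$, the bound $\|\partial_l\phi_{fz}\|<\lambda$ at the zero section extends to $\|\partial_l\phi_{f'z}\|<\lambda$ on a uniform neighborhood, giving the $\lambda$-Lipschitz property (which is what ``$\lambda$-contracting'' means here). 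The invariance of the $\epsilon$-ball then comes from the separate observation $\|\phi_{f'z}(0)\|<(1-\lambda)\epsilon$, again by continuity from $\phi_{fz}(0)=0$ on the zero section. With this two-step decomposition in place of your single inequality, your argument is complete and matches the paper's.
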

\begin{proof}
Let $\pi_v$ be the projection of $T\mathcal F$ onto $F$ parallelly to $\chi$. Let $\pi_h$ be the projection of $T\mathcal F$ onto $\chi$ parallelly to $F$. Let $Tf'_h:=\pi_h\circ T\hat f'$ and $Tf'_v:=\pi_v\circ T\hat f'$.

For any vector $e\in \chi(z)$, the point $(e,l(e))$ is sent by $T_z \hat f'$ onto $\big(Tf'_h(e,l(e)),Tf'_v(e,l(e))\big)$.

Let $e':=  Tf'_h(e,l(e))$. By definition of $l'$, the point $(e,l(e))$ is sent by $T_x\hat f'$ to $(e',l'(e'))$.

As $f^*$ is an immersion, by restricting $V_f$ and $\epsilon$, the map $e\mapsto Tf_h'(e,l(e))$ is invertible. 
Thus $e=(Tf'_h(\cdot,l(\cdot)))^{-1}(e')$.
 
Therefore, the map $l'$ is $e'\mapsto Tf'_v(\cdot ,l(\cdot))\circ (Tf'_h(\cdot ,l(\cdot))^{-1} (e')$.

Hence, the expression of $l'=\phi_{f'z}(l)$ depends algebraically  on $l$, and the coefficients of this algebraic expression depends continuously on $f'$ or $z$, with respect to some trivializations.

When $f'$ is equal to $f$ and $z$ belongs moreover to the zero section, the map 
\[\phi_{fz} \;:\;l'\mapsto T_zf_v\circ l\circ \big(T_zf_h(\cdot ,l(\cdot))\big)^{-1}\]
is $\lambda$-contracting for $l$ small, by normal contraction, since $T_zf_h(\cdot ,l(\cdot))$ is close to $T_zf^*$.

Thus, for $\epsilon$ small enough, for all $z\in (f^*\circ \pi)^{-1}(V)$ and $l \in P^1_{z}$ both with norm less than $\epsilon$, the tangent map of $\phi_{fz}$ has a norm less than $\lambda$.

Therefore, for $V_f$ small enough, the tangent map $T\phi_{f'z}$ has a norm less than $\lambda$ and hence $\phi_{f'z}$ is a $\lambda$-contraction on $cl\big({B}_{P_{z}^1}(0,\epsilon)\big)$. 

As, for $z$ in the zero section, the map $\phi_{fz}$ vanishes at $0$, for $\epsilon$ and $V_f$ small enough, the norm $\phi_{f'z}(0)$ is less than $(1-\lambda)\cdot \epsilon$.

Consequently, by $\lambda$-contraction, the closed $\epsilon$-ball centered at the $0$-section is sent by $\phi_{f'}$ into the $\epsilon$-ball centered at $0$, for all $z\in (f^* \circ \pi)^{-1}(V)$ with norm less than $\epsilon$ and $f'\in V_f$.
 \end{proof}
\begin{rema}\label{futurgrass} If $r\ge 1$, we notice that the action of $\hat f$ on $P^1$ is contracting and $(r-1)$-times more than $f^*$.\end{rema}

For any $C^1$-section $i'$ of $(F,\mathcal F)\rightarrow (L,\mathcal L)$
and for every $x\in L$, the tangent space to the image of $i'$ at $i'(x)$ is an element $\nabla i'(x)$ of $P^1_{i'(x)}\approx T_x\mathcal L^*\oplus F_x$.

By conclusion 2 of Lemma \ref{lem6'}, for $f'\in V_f$, for $j\in V_i^0$, for $x_0\in (\pi \circ \hat f'\circ j)^{-1}(V)$, the $d$-plane $\nabla j(x_0)$ is sent by $T_x\hat {f'}$ to  $\nabla S^0(f',j)(y_0)$, with $x:= j(x_0)$ and $y_0:=\pi\circ \hat f'(x)$. Thus $\nabla S^0(f',j)(y_0)$ is equal to $\phi_{f'x}(\nabla j(y_0))$.

 The interest of such a distribution $\chi$ is that any morphism $p \in Mor^1 (\mathcal L,\mathbb R)$ and $i'\in \Gamma$ satisfy: 
\begin{equation}\label{nabla}\nabla(p\cdot i')(x)=Tp(x)\cdot i'(x)+p(x)\cdot \nabla i'(x),\quad \forall x\in L\end{equation}    
\begin{proof}[Proof of Equation (\ref{nabla})]
 Let $\pi_v$ be the projection of $T\mathcal F$ onto $F$ parallelly to $\chi$. The linear morphism $\pi_v$ commutes with the partial
derivative $\partial_1\Psi$ with respect to the first variable since $\pi_v\circ \partial_1 \Psi$ and $\partial_1 \Psi\circ \pi_v$ have the same kernel $\chi$ and are equal on the complement $F$ in $T\mathcal F$.

%We have also $\partial_2\Psi_{(\rho,i')}$ restricted to $F$ equal to $\Psi(\rho,\cdot)$ and the derivative $\partial_1\Psi_{(\rho,i')}$ along the first variable is equal to $\Psi(\cdot, i')$.
 Thus we have:
\[\nabla(p i')=\pi_v\circ T( \Psi(i', p))= \pi_v\big( \partial_1\Psi_{(i', p)}\circ Ti'+\partial_2\Psi_{(i', p)}\circ T p\big)=
\partial_1 \Psi_{(i', p)}\circ \pi_v\circ Ti'+ \pi_v\circ \partial_2 \Psi_{(i', p)}\circ T p.
\]
Also we have: $\partial_1 \Psi_{(y,t)}(u)=u\cdot t$ and $\partial_2 \Psi_{(y,t)}(t')=t'y$, for every $y\in   F_{\pi(y)}$. Consequently:

\[\nabla(p i')=p\cdot \pi_v\circ Ti'+T\rho\cdot i'=p \cdot \nabla i'+Tp\cdot i'.\]
\end{proof}

Thus by Equation (\ref{nabla}), we have for any $x_0\in (\pi \circ \hat f'\circ j)^{-1}(V)$:
\[\nabla S_{f'}(j)(y_0)=\rho(y_0)\cdot \phi_{f'x} (\nabla j(x_0)) +T\rho(y_0)\cdot \hat f'(x),\quad \mathrm{with}\; x:=j(x_0)\; \mathrm{and}\; y_0:=\pi\circ \hat {f}'(x).\]

Since the support of $\rho$ is in $V$, for every $y_0\in L$:
\[\nabla S_{f}(j)(y_0)=\phi'_{fx}(\nabla j(y_0)),\]
 
with $\phi'_{fx}\; :\; l\in P^1_{x}\mapsto \left\{\begin{array}{cl} 
0& \mathrm{if}\;\pi(y_0)\notin V\\
\big( \rho (y_0)\cdot \phi_{f'x} (l) + T_{y_0} \rho \cdot x\big)\in P^1_{\hat f'(x)}&\mathrm{if}\;  \pi(y_0)\in V.\end{array}\right.$.  

We notice that $\phi'_{f' x}$ is at least as much contracting as $\phi_{f'x}$ and sends the $\epsilon$-ball of $P^1_{x}$ centered at 0 into the $\epsilon$-ball of $P^1_{\hat f'(x)}$ centered at $0$, for $x$ small and $f'$ close to $f$. Also $(\phi'_{f'x})_x$ is a continuous family of maps.

\subsection{Proof of Theorem \ref{Maincontract} when $r=1$}\label{Proof of theorem}
Let us begin by proving the existence of a closed neighborhood $V_{i}$ of $i\in \Gamma$ sent by $S_{f'}$ into itself, for any $f'\in End^r(M)$ close to $f$.

For $\epsilon, \epsilon' >0$ small enough, the following set is included in $V_i^0$: 

\[V_i:= 
\Big\{i'\in \Gamma; \quad d_{C^0}(i,i')\le \epsilon'\;\mathrm{and}\; \; \|\nabla i'(x)\|_{P^1}\le \epsilon,\; \forall x\in L\Big\}\]
where $\|\cdot \|$ refers to the $P^1$-norm.

 Therefore, by the two last steps, for $\epsilon$ then $\epsilon'$ and finally $V_f$ small enough, $S_{f'}$ sends $V_i$ into itself, for every $f'\in V_f$. 

We showed that $S$ is contracting for the $C^0$-topology. Hence, for every $f'$ in $v_f$, the sequence $(S_{f'}^n(i))_{n\ge 0}$ converges to a Lipschitz section $i(f')$. If this section is continuously differentiable, then the morphism 
\[f'^*:= \; x\in L'\mapsto \pi\circ \hat f'\circ i'(x)\]
is also of class $C^1$. Moreover Conclusions 1 and 2 of Theorem \ref{Maincontract} are satisfied. 

Let us show that $i(f')$ is of class $C^1$. 

We remind that for every $x\in V$ and every $i'\in V_i$, there exists $y\in   L$ such that:
\[ \rho(x)\cdot \hat f'\circ i'(y)= S_f(i')(x).\]

Consequently, for every $x\in V$, there exists $y$ such that: \[\rho(x)\circ \hat f\circ i(f')(y)= i(f')(x).\]

Thus there are two possibilities for each point $x\in V$:
\begin{enumerate}
\item there exists an infinite sequence $(x_n)_{n\le 0}\in V^\mathbb N$ such that:
\begin{equation}\label{the very last one}x_0=x\; \mathrm{and} \; \rho(x_{n+1})\cdot \hat f'\circ i'(x_n)= S_f(i')(x_{n+1})\end{equation}
\item there is not such an infinite sequence.
\end{enumerate}

In the second case, let $(x_n)_{n=0}^{-N}\in V^N$ be a maximal chain satisfying (\ref{the very last one}).

Then we notice that 
\[i(f')(x)= S_{f'}^{N+1} (i)(x).\]

Since $\rho$ has compact support in $V$, the above equality holds on a neighborhood of $x$. This implies that $i(f')$ is of class $C^1$ on a neighborhood of $x$. 

The first case  is more delicate. Let $(x_n)_{n\le 0}$ be as in the definition. By Lemma \ref{contracP1}, the intersection   
\[\bigcap_{n\ge 0} \phi'^{n}_{f'}(B_{P^1_{i(f')(x_n)}} (0,\epsilon))\]
is decreasing and consists of a single point  $l(x)$. Also for $x'$ close to $x$, the length of the preorbit of $x'$ in $V$ is  large.

Let $u$ be a tangent vector at $i(f')(x)$ to $i(f')(\mathcal L_x^\eta)$. This means that there exists a sequence $(y_n)_n\in \big(i(f')(\mathcal L_x^\eta)\setminus \{y\}\big)^\mathbb N$ such that:
\[y_n\stackrel{n\rightarrow \infty}{\longrightarrow} y\quad \mathrm{and}\quad y_n= y+ud(y_n,y)+o(d(y_n,y)),\quad \mathrm{
with} \; y:= i(f')(x).\]

Also, for every  $k\le 0$ and then $n$ sufficiently large, there exists $(y_n^j)_{j=0}^k\in V^{-k}$ such that $y_n^0=y$ and:
\[ \rho \circ \pi (y_n^{j+1} ) \circ \hat f'(y_n^j) =y_n^{j+1}, \quad \forall j<0\]

Moreover the sequence $(y_n^{j})_n$ converges to $y^j:=i(f')(x_j)$ and there exists $u_j$ such that: \[y_n^j=y_j+u_j d(y_n^j, y_j)+o(d(y_n^j, y_j)).\]

Consequently, $u_j$ is tangent to $i(f')(\mathcal L_{x_j}^\eta)$. Also  $u_j$ belongs to a $d$-plane of $B_{P^1_{y_j}}(0,\epsilon)$. Therefore $u_0$ is a $\epsilon\cdot \lambda^{-j}$-close to $l$. When $j$ approaches infinity, we get that the tangent space of $i(f')(x)$ is $l$.

Thus we showed that at any point $x\in   L$, the tangent space $\nabla i(f')(x)$ of $i(f')(\mathcal L_x^\eta)$ at $x$ is a $d-$plane of $P^1_{i(f')(x)}(0,\epsilon)$.

To show that $i(f')$ is of class $C^1$, it only remains to show that such a distribution of $d$-planes is continuous at every point $x$ with infinite preorbit $(x_n)_{n\le 0}$.

Let $\delta>0$. Let $N\ge 0$ such that $\lambda^N \epsilon\le \delta/3$. Let $x'$ be sufficiently close to $x$ such that it has a preorbit $(x_{n}')_{n=0}^{-N}\in V^\mathbb N$ well defined. 

By continuity of $\phi_{f'}'$, for $x'$ sufficiently close to $x$, the chain $(x_n ')_{n=0}^{-N}$ is sufficiently close to $(x_n)_{n=0}^{-N}$ such that $l_1:= \phi'_{f'y_{-1}}\circ \cdots \circ \phi'_{f'y_{-N}}(0)$ and $l'_1:= \phi'_{f'y'_{-1}}\circ \cdots \circ \phi'_{f'y'_{-N}}(0)$ are $\delta/3$-close, with $(y'_n)_n:= i(f')(x'_n)$.

Also by contraction of $\phi'_{f'}$, 
\[d(l_1,\nabla i(f')(x))\le \frac{\delta}{3}\quad \mathrm{and} \quad d(l'_1,\nabla i(f')(x'))\le \frac{\delta}{3}.\]

Therefore the distance between $\nabla i(f')(x)$ and $\nabla i(f')(x')$ is less than $\delta$. This finishes the proof of the regularity of $i(f')$.

Let us prove conclusion $(3')$ of Theorem \ref{Maincontract}.

Let $(y_j)_{j\le 0}\in M^{\mathbb Z^-}$ be a $f'$-preorbit which is $\epsilon$-close to the image by $i$ of a $\epsilon$-pseudo-orbit $(x_j)_{j\le 0}\in L'^{\mathbb Z^-}$ of $f^*$ respecting the plaques. 
Let $y_j':= (I_{x_j})^{-1}(y_j)$ and $z_j:= \pi(y'_j)$.

We want to show that $i(f')(z_j)$ is equal to $y_j$, for every $j\le 0$.

Let $n< 0$ be such that $i(f')(z_n)$ and $y_n$ are at distance in $F_{z_n}$ at least:
\begin{equation}\label{eq pour conclusion 3'} \frac{\lambda+1}{2}\sup_{j< 0}d_{F_{z_j}}\big(i(f')(z_j),y_j\big).\end{equation}

For $\epsilon>0$ small, the sequence $(z_j)_{j< 0}$ is close to $L'$. Also we can construct a section $i'\in V_i$ equal to $y_j$ at $z_j$, for each $j<0$, and such that the $C^0$-distance between $i'$ and $i(f')$ is $\sup_{j< 0}d_{F_{z_j}}\big(i(f')(z_j),y_j\big)$.

We remind that on $V'\subset V$ the maps $S$ and $S^0$ are equal, since $\rho$ is there equal to 1.
 
By contraction of $S^0$, the $C^0$-distance between $i(f')_{|V'}$ and $S^0(f',i')_{|V'}$ is less than $\lambda\cdot d(i(f')(z_n),y_n)$.

By conclusion $(2)$ of Lemma \ref{lem6'}, the immersion $S^0_{f'}(i')$ sends $(z_j)_{j\le 0}$ to $(y_j)_{j\le 0}$. By Equation (\ref{eq pour conclusion 3'}):
\[\sup_{j<0} d_{F_{z_j}}\big(i(f')(z_j),y_j\big)\ge d_{C^0} \big(i(f'), i'\big) \ge \frac{1}{\lambda} d_{C^0} \big(i(f'), S_{f'}(i')\big)\ge \frac{\lambda+1}{2\lambda}
\sup_{j<0} d_{F_{z_j}}\big(i(f')(z_j),y_j\big).
\]

Therefore for every $j< 0$, the point $z_j$ is sent by $i(f')$ to $z_j$. By commutativity of the diagram, the point $z_0$ is sent by $i(f')$ to $z_0$.

\subsection{Proof of Theorem \ref{Maincontract} when $r>1$}
Instead of regarding the immersion $i$ of $\mathcal L$ into $M$, we can consider the following immersion into the Grassmannian bundle $G_r(d,TM)$ of $d$-planes of $TM$:
 \[ i_1\;:\; x\in L\mapsto \big(i(x),Ti(T_x\mathcal L)\big).\]

Since $f^*$ is an immersion, $Tf$ acts canonically on the Grassmannian $G$ of $d$-planes of $TM$, on a neighborhood of $\{Ti(T_x\mathcal L), \; x\in L\}$. As $f$ is $1$-normally contracting, $Tf$ sends into itself a nice such a neighborhood.

We define by induction the $l^{th}$-Grassmannian $G^l$: $G^1$ is $G$ and $G^{l+1}$ is the Grassmannian of $d$-planes of $TG^l$. By the same way, we define the immersion $i_l$ of $(L,\mathcal L)$ into $G^l$. We notice that $Tf$ acts canonically on $G^l$. We denote by $f_l$ its action.

Let us describe the constructions $(G^j)_j$, $(i_j)_j$ and $(f_j)_j$. 

\[\mathrm{Let}\; G^0:= M\;  \mathrm{and}\; G^{j+1}:= \{l \in C^1(\mathbb R^d, G^j):\; T_0 l\; \mathrm{is\; injective}\}/\sim,\]
with $l\sim l '$ if there exists a diffeomorphism $\phi$ of $\mathbb R^d$ fixing the origin such that $l'(x)= l\circ \phi(x)+ o(x)$. 
We notice that we have a bundle $\pi_{j+1,j}:\; G^{j+1}\rightarrow G^j$ and so $\pi_{j}:= \pi_{j,j-1}\circ \dots \pi_{1,0}:\; G^j\rightarrow M$. 

For each $i'\in V_i$,  we define the following immersion of $(L,\mathcal L)$ into $G^j$: $i_0':= i'$ and $i'_{j+1}:= x\in (L,\mathcal L)\mapsto [i'_{j|\mathcal L_x^\eta}]$, by identifying $\mathcal L_x^\eta$ to $\mathbb R^d$ and $x$ to $0$.
We notice that $i'_j$ is well defined for $0<j\le r$ since $\pi_{j-1} \circ i'_{j-1}=i'$ is an immersion and so $i'$ is an immersion.

Similarly the action $ f'_j$ of $f'\in V_f$ on $G^j $ is defined inductively: $f'_0$ is $f'$ and $f'_{j+1}([l])$ is equal to $[f'_j\circ l]$.
This action is well defined on a neighborhood of the image of $i_j$ since $\pi_j\circ  f'_j\circ i_j= f\circ i$ and so $f'_j\circ i_j$ is well an immersion.  

By Remark (\ref{futurgrass}), $f_1$ $r-1$-contracts the immersion $i_1$ of $(L, \mathcal L)$ over $f^*$. Thus, by induction, the action of $ f'_l$ on the $ l^{th}$-Grassmannian $G^l$ of $TM$ is contracting at a neighborhood of $i_l(L)$,  for every $l\in \{1,\dots ,r\}$.    

As before we shall use the formalism of connexions since the lamination $(L,\mathcal L)$ is not compact, and so we are going to study the consequences  of the multiplication by $\rho$ of after the action of $T^j  f'$ on $G^j$.

In a similar way to which it is done in section \ref{r=1c}, for $i'\in \Gamma$ and $l\in \{1,\dots,r\}$, let $\nabla^l i'$ be the map $p_v\circ T^li'$,  where $\pi_v$ is the projection of $T\mathcal F$ onto $F$ parallelly to $\chi$, we notice that $T^l i'$ is the $l^{th}$-differential of $i'$. 
Thus $\nabla^l i'$ is a vector of the space $L_{sym}^l(T\mathcal L,F)$ of $l$-linear symmetric morphisms from the bundle $(T\mathcal L)^l$ to $F$ over $L$. 

Let us show the identification of $i'_j$ with $\nabla^j i'(x)$, by induction. For $j\le 1$ this is already done.  

We remind that $i'_{j+1}(x)=[i_{j|\mathcal L_x^\eta}]$, for $x\in L$. By induction, we have $i'_{j+1}(x)=[p_v \circ T^j  i'_{|\mathcal L_x^\eta}]$ and so $i'_{j+1}(x)= Tp_v\circ T_x T^j i'$. As $p_v$ is the tangent map of $T_{i'(x)}\mathcal F$, we have:
\[i'_{j+1}(x)= p_v \circ T^{j+1}_x i'.\]

We notice that this identification is coherent with the topology of $G^j$ and $Im^j (\mathcal L,M)$.

By the above discussion, we can define an  endomorphism $\phi^l_{f'}$ from the $\epsilon$-neighborhood of the zero section of $L_{sym}^l(T\mathcal L,F)$ into itself, contracting each fiber and such that:
\[\forall x\in V,\quad \nabla^l S^0(f',i')(x):= \phi^l_{f'y}(\nabla^l i'(y)), \quad \mathrm{with}\; \hat f'(i'(y))=x.\]

An induction proves similarly as for Equation (\ref{nabla}), the following Leibniz formula: 

\[\nabla^l S_{f'}(i'):=\sum_{k=0}^l C_l^k (T^{l-k} \rho)  \nabla^k S^0(f',i').\]

Thus $(\nabla^l S_{f'}(i'))_{l=1}^r$ is equal to a continuous, contracting function of $(\nabla^k i')_{k=1}^l$  and we can proceed as in section \ref{Proof of theorem} to conclude the proof. 

\section{Proof of differentiable persistences of normally expanded laminations}
In this section we prove Theorem \ref{Main} in the normally expanded case.

\subsection{The setting}
We fix $r\ge 1$. Let us denote by $K$ the closure of $L'$. 

Let $(F,\mathcal F,I,\pi)$ be a $C^r$-tubular neighborhood of the immersed lamination $(L,\mathcal L)$ in $M$.
We identify $L$ to the zero section of $F$. We endow $i^*TM$ with a norm satisfying the normal expansion inequality. We endow $F$ with the norm associating to a vector $v\in F$ the one of the projection of $T_0I(v)$ into $E^u$ parallelly to $T\mathcal L$ in $i^*TM$.

Let $d$ be the dimension of the leaves of $\mathcal L$. 
%Let $\chi$ be a distribution of $d$-planes tangent to the leaves of the lamination $(F,\mathcal F)$, such that the restriction of $\chi $ to $L$ is $T\mathcal L$.

We will always suppose $\eta>0$ small enough such that for every $x$ in a neighborhood of $K\cup f^*(K)$:\begin{itemize}
\item[-]  the ball $B_{F_x}(0,\eta)$ centered at $0$ and with radius $\eta$ is sent by $I$ to a submanifold $F_x^\eta$,
\item[-]  the union $\mathcal L_x^\eta$ (resp. $\mathcal F_x^\eta$) of plaques of $\mathcal L$ (resp. $\mathcal F$) containing $x$ and with diameter less than $\eta$, is a precompact plaque. \end{itemize}

%Let us denote by $K$ the closure of $L'$.
%Let $(F,\mathcal F,I,\pi)$ be a tubular neighborhood of the immersed lamination $(L,\mathcal L)$ in $M$.
%Let $d$ be the dimension of the leaves of $\mathcal L$.

%For $x\in K$ and $\eta>0$, let $ F_x^\eta$ be the image by $I$ of a the ball $B_{F_x}(0,\eta)$ centered in $0$ and with radius $\eta$.

\subsection{The algorithm}  
The idea of the proof is to use the following lemma which constructs the preimages of perturbations of the immersed lamination  $\mathcal L$.

\begin{lemm}\label{lem6}
For any small $\eta>0$, there exists a neighborhood $V$ of $K$ in $L$, a neighborhood $V_i^0$ of $i\in Mor^r(\mathcal L,M)$, a neighborhood $V_f$ of $f\in End^r(M)$ and a continuous map
\[S^0\;:\; V_f\times V_i^0\rightarrow Im^r(\mathcal L_{|V},M)\]
satisfying:
\begin{enumerate}
\item the morphism $S^0(f,i)$ is equal to $i_{|V}$,
\item \label{preconc4} for all $x\in L'$, $i'\in V_i^0$ and $f'\in V_f$, the preimage of $i'(\mathcal L_{f^*(x)}^{\eta})$ by $f'$ intersects $ F_{x}^{\eta}$ at a unique point $S^0(f',i')(x)$, for all $f'\in V_f$ and $i'\in V_i^0$.
\end{enumerate}

\end{lemm}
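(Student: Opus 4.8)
The plan is to follow the proof of Lemma~\ref{lem6'}, interchanging images and preimages and replacing normal contraction by normal expansion. The holonomy argument used there cannot be copied verbatim because $f^*$ need not be an immersion along the leaves; its robust substitute will be a transversality argument.

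First I would fix $\eta>0$ small and, for $x$ in a neighbourhood $V$ of $K$ still to be chosen, define $S^0(f',i')(x)$ implicitly. Near $i'(f^*(x))$ write the perturbed plaque $i'(\mathcal L_{f^*(x)}^\eta)$ as the zero set of a $C^r$-submersion $g$ into $\mathbb R^{n-d}$; then $S^0(f',i')(x)$ is to be the unique $z\in B_{F_x}(0,\eta)$ solving $g\circ f'\circ I(z)=0$. The differential at $z=0$ of $z\mapsto g\circ f\circ I(z)$ is the composite $Tg\circ Tf\circ (T_0I)_{|F_x}$: here $(T_0I)_{|F_x}$ carries $F_x$ isomorphically onto the chosen complement $N(x)$ of $Ti(T_x\mathcal L)$ in $i^*TM_x$; $Tf$ carries $Ti(T_x\mathcal L)$ into $Ti(T_{f^*(x)}\mathcal L)=\ker Tg$ because $f\circ i=i\circ f^*$; and, since $E^s$ has dimension zero, the normal expansion inequality says that $p_u\circ Tf$ carries $N(x)$ isomorphically onto $E^u_{f^*(x)}$, transversally to $Ti(T_{f^*(x)}\mathcal L)$. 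Hence this differential is a linear automorphism of $\mathbb R^{n-d}$, uniformly invertible over the compact set $K$. I would then shrink $\eta$ so that, uniformly in $x\in K$, the map $g\circ f'\circ I$ stays a small enough $C^1$-perturbation of its linear part on $B_{F_x}(0,\eta)$, and afterwards shrink $V_f$ and $V_i^0$ so that $\|g\circ f'\circ I(0)\|$ is small; the implicit function theorem then produces the required unique solution $z=S^0(f',i')(x)$, which is $C^r$ along the plaques of $\mathcal L$ and depends continuously on $(f',i')$. For $(f',i')=(f,i)$ this solution is $z=0$, which is conclusion~$1$; conclusion~$2$ holds by construction; and, identified via $I$ with a $C^r$-section of $(F,\mathcal F)\to(L,\mathcal L)$ over $V$, the map $S^0(f',i')$ is an immersion of $\mathcal L_{|V}$ into $M$.

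The step I expect to be the main obstacle is to show that $S^0(f',i')$ is in fact a $C^r$-\emph{morphism} of laminations, uniformly continuously in $(f',i')$ for the $C^r$-compact-open topology. Fix $x\in K$ and a small distinguished open set $U\ni x$. Since $f^*$ is a morphism, for $U$ small enough $f^*$ maps every plaque $\mathcal L_{x'}^\eta$, $x'\in U$, into one and the same slightly enlarged plaque $N:=i'(\mathcal L_{f^*(x)}^{\eta'})$ of $i'(\mathcal L)$, so that $S^0(f',i')(x'')$ is, for each $x''\in\mathcal L_{x'}^\eta$, the unique point of $B_{F_{x''}}(0,\eta)$ whose image under $G\colon z\mapsto f'(I(z))$ lies on $N$. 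Now at $i(x)$ the $C^r$-map $G$, restricted to the tube $\bigcup_{x''\in\mathcal L_{x'}^\eta}B_{F_{x''}}(0,\eta)$, is transverse to $N$: indeed $TG(T_x\mathcal L\oplus N(x))=Tf(T_x\mathcal L)+Tf(N(x))$, the first summand lies in $Ti(T_{f^*(x)}\mathcal L)=TN$, and $Tf(N(x))+Ti(T_{f^*(x)}\mathcal L)=T_{i(f^*(x))}M$ by normal expansion. By openness of transversality and compactness of $K$, after further shrinking $\eta$, $V_f$ and $V_i^0$, $G$ is transverse to $N$ on the whole tube, so $\Sigma:=G^{-1}(N)$ is a $C^r$ submanifold of dimension $d$; moreover $\pi_{|\Sigma}\colon\Sigma\to\mathcal L_{x'}^\eta$ is a local $C^r$-diffeomorphism at $i(x)$, since a vector $v\in N(x)=\ker T\pi$ with $Tf(v)\in TN$ must vanish, again by normal expansion. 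Hence, for $\eta$ small, $\pi_{|\Sigma}$ is a $C^r$-diffeomorphism whose inverse is exactly $x''\mapsto S^0(f',i')(x'')$. This shows $S^0(f',i')$ is $C^r$ along the plaques of $\mathcal L_{|U}$ and $C^r$ in the base point; being a section of $\pi$ it automatically has locally constant transverse coordinate, so it is a $C^r$-morphism on $\mathcal L_{|U}$, depending continuously on $(f',i')$.

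To conclude, I would cover the compact set $K$ by finitely many such open sets $U$, shrink $V_f$ and $V_i^0$ so that all of the above holds simultaneously on all of them, and take $V$ to be the union of this finite cover; the resulting continuous map $S^0\colon V_f\times V_i^0\to Im^r(\mathcal L_{|V},M)$ then satisfies conclusions~$1$ and~$2$. The delicate point throughout — and the reason the holonomy argument of Lemma~\ref{lem6'} cannot simply be transcribed — is that $f^*$ may fail to be an immersion on the leaves, so there is no pulled-back foliation along which to take holonomies; transversality of $G=f'\circ I$ to the fixed enlarged target plaque $N$, checked uniformly over a whole tube, is the substitute, and it uses only the normal expansion inequality together with the fact that $f$ preserves the lamination.
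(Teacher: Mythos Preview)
Your approach is essentially the paper's own: both arguments define $S^0(f',i')(x)$ as the unique point of $F_x^\eta$ whose image under $f'$ lies on the perturbed target plaque, and both obtain the $C^r$-laminar regularity by recognising the restriction of $S^0(f',i')$ to a plaque as the inverse of a diffeomorphism. In your language this is $(\pi_{|\Sigma})^{-1}$ with $\Sigma=G^{-1}(N)$; in the paper's it is the holonomy along the foliation $(F_{x''}^\eta)_{x''}$ from the zero section $i(\mathcal L_{x'}^\eta)$ to the transverse section $f'^{-1}_{|V_{i(x)}}(i'(\mathcal L_{t'}))$. These are the same map.

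There is, however, one genuine slip. You claim that for $U$ small enough, $f^*$ maps \emph{every} plaque $\mathcal L_{x'}^\eta$, $x'\in U$, into one and the same enlarged plaque $N:=i'(\mathcal L_{f^*(x)}^{\eta'})$. This is false: distinct plaques in $U$ lie in distinct leaves, and $f^*$ sends them to distinct leaves of $\mathcal L$, so the target plaque must depend on the transverse coordinate of $x'$. The paper handles this by fixing a chart $\phi_y$ near $y=f^*(x)$, writing the target plaques as $\mathcal L_{t'}=\phi_y^{-1}(V_{u_y}\times\{t'\})$ with $t'$ the second coordinate of $\phi_y\circ f^*(x')$, and noting that $\psi_{i',t'}:u\mapsto i'\circ\phi_y^{-1}(u,t')$ depends $C^r$-continuously on $t'$ and $i'$. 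Transversality being open, the preimages $f'^{-1}(i'(\mathcal L_{t'}))$ then form a $C^r$-continuously varying family of submanifolds, and it is precisely this variation that yields the transverse continuity of the $C^r$-jets of $S^0(f',i')$ --- i.e.\ what makes $S^0(f',i')$ a $C^r$-\emph{morphism} of $\mathcal L_{|U}$ rather than merely $C^r$ along a single plaque. Your argument with a fixed $N$ gives only the latter; the fix is to let $N=N_{x'}$ vary and to invoke continuity of $\Sigma_{x'}$ in $x'$, exactly as the paper does.

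Your closing remark is also slightly off. The holonomy description from Lemma~\ref{lem6'} \emph{does} transcribe: the paper's proof of the present lemma again expresses $S^0(f',i')_{|\mathcal L_{x'}^\eta}$ as holonomy along the same $F$-foliation; the only change is that the target transverse section is the preimage $f'^{-1}(i'(\mathcal L_{t'}))$ rather than the image $f'\circ i'(\mathcal L_{f^{*-1}(x')})$. That $f^*$ need not be a leafwise immersion is immaterial here, since no foliation is pulled back through $f^*$ in either proof; the transversality hypothesis needed to produce the preimage section is exactly the normal-expansion inequality $Tf(T_{i(x)}M)+Ti(T_{f^*(x)}\mathcal L)=T_{i\circ f^*(x)}M$, which you correctly identify.
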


\begin{proof}
By normal  expansion, we may suppose $\eta>0$ small enough and then $(i',f)$ close to $(i,f)$ such that, by normal expansion, the restriction of $f'$ to $F_{x}^{\eta}$ is a diffeomorphism onto its image, and this image intersects transversally at a unique point the image of the plaque $\mathcal L_{f^*(x)}^{\eta}$ by $i'$, for all $x$ in a neighborhood $V$ of $K$, for all $i'$ $C^r$-close to $i$ and for all $f'$ $C^r$-close to $f$.

Writing this intersection point in the form $f'(v)=i'(x')$, we define $S^0(f',i')(x):= v$. 
\begin{figure}[h]
    \centering
        \includegraphics{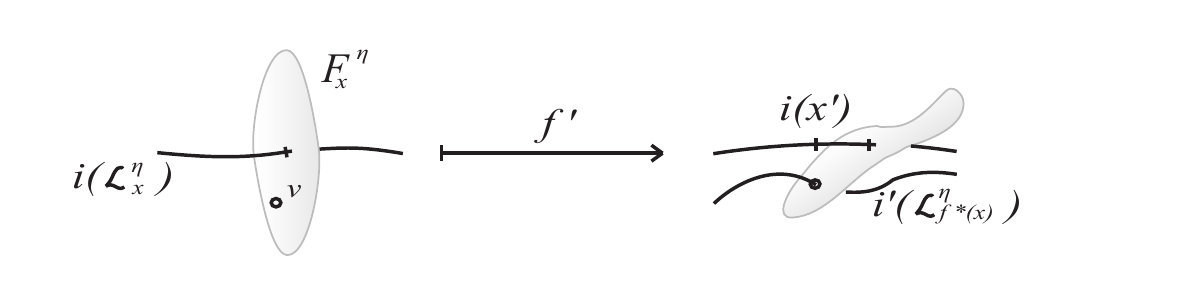}
        
%[width=7cm]
    \caption{Definition of $S$.}
    \label{cexple}
\end{figure}

Such a map $S^0$ satisfies conclusions 1 and \ref{preconc4} of Lemma \ref{lem6}.
Let us show that $S^0$ takes continuously its values in the set of morphisms from the lamination $\mathcal L_{| V}$ into $M$.   

Let $x\in K$ and let $(U_y,\phi_y)\in \mathcal L$ be a chart of a neighborhood of $y:=f^*(x)$.
We may suppose that $\phi_y$ can be written in the form:
\[\phi_y\;: \; U_y\rightarrow \mathbb R^{d}\times T_y,\]
where $T_y$ is a locally compact metric space. Let $(u_y,t_y)$ be defined by $\phi_y(y)=:(u_y,t_y)$.

We remark that $\mathbb R^{d}$ is $C^r$-immersed by $\psi:= u\in \mathbb R^{d} \mapsto i\circ \phi^{-1}(u,t_y)$.

By normal expansion, the restriction of $f$ to a small neighborhood of $i(x)$ is transverse to the above immersed manifold, at $z:=f\circ i(x)$.
In other words,
\[Tf(T_{i(x)}M)+T\Psi(T_{u_y}\mathbb R^{d})=T_zM.\]

Thus, by transversality, there exist nice open neighborhoods $V_{u_y}$ of $u_y\in\mathbb R^d$
 and $V_{i(x)}$ of $i(x)\in M$ such that the preimage by $f_{|V_{i(x)}}$ of $\psi(V_{u_y})$ is a
 $C^r$-submanifold.
Moreover, such a submanifold depends continuously on $f$ and $\psi$, with respect to the $C^r$-topologies.

More precisely, there exist neighborhoods  $V_{u_y}$ of $u_y$,
$V_f$ of $f\in End^r(M)$, $V_\psi$ of $\psi\in C^r(\mathbb R^{d},M)$, and $V_{i(x)}$ of $i(x)$
such that for all $f'\in V_f$ and $\psi'\in V_\psi$, $f'_{|V_{i(x)}}$ is transverse to $\Psi'_{|V_{u_y}}$ and also the preimage by $f'_{|V_{i(x)}}$ of $\psi'(V_{u_y})$ is a manifold which depends continuously on $f'$ and $\psi'$, in the compact-open $C^r$-topologies.

There exist neighborhoods $V_{t_y}$ of $t_y$ in $T_y$ and $V_i^0$ of $i\in Im^r(\mathcal L, M)$, such that
\[\psi_{i',t}\;:\; u\in \mathbb R^d\mapsto i'\circ \phi(u,t)\]
belongs to $V_\psi$, for all $t\in V_{t_y}$ and $i'\in V_i^0$.

Thus, the preimage by every $f'\in V_f$, restricted to $V_{i(x)}$, of the plaque $\mathcal L_{t}:= \phi^{-1}_y(V_{u_y}\times \{t\})$,
immersed by $i'\in V_i^0$, depends $C^r$-continuously on $f'$, $i'$ and $t$.

For $\eta>0$ small enough, the manifolds $(F_{x''}^{\eta})_{x''\in \mathcal L_{{t}}^{\eta}}$
form the leaves of a $C^r$-foliation on a open subset of $M$, which depends continuously on $t\in   V_{t_y}$: the foliation associated to $t'$ close to $t$ is sent to the one of $t$ by a $C^r$-diffeomorphism of $M$ close to the identity. We may suppose $U_x$ and $\eta$ small enough in order that the closure of $\cup_{x'\in U_x}\mathcal L_{x'}^\eta$ can be sent by $f^*$ into $\phi_y^{-1}(V_{u_y}\times V_{t_y})$.
 
For all $\eta>0$ and then $V_i^0$ and $V_f$ small enough, each submanifold $F_{x''}^{\eta}$
 intersects transversally at a unique point  the submanifold $f'^{-1}_{|V_{i(x)}}\big(i'(\mathcal L_{t'})\big)$, where $t'$ is the second  coordinate of $\phi_y\circ f^*(x'')$ and $x''$ belongs to $\cup_{x'\in U_x} \mathcal L_{x'}^\eta$.
As we know $S^0(f',i')(x'')$ is this intersection point.
 
In other words, $S^0(f',i')_{|\mathcal L_{x'}^{\eta}}$ is the composition of $i$ with the holonomy along the $C^r$-foliation $(F_{x''}^{\eta})_{x''\in \mathcal L_{x'}^{\eta}}$, from $i(\mathcal L_{x'}^{\eta})$ to the transverse section $f'^{-1}_{|V_{i(x)}}(i'(\mathcal L_{t'}))$, where $t'$ is the second coordinate of $\phi_y\circ f^*(x')$, for every $x'\in U_x$.
 
 Thus, the map $S^0(f',i')$ is of class $C^r$ along the $\mathcal L$-plaques contained in $U_x$.
 As these manifolds vary $C^r$-continuously with $x'\in U_x$, the map $S^0(f',i')_{|U_x}$ is a
 $\mathcal L_{|U_x}$-morphism into $M$.
 
  As these foliations and manifolds depend $C^r$-continuously on $x'\in U_x$, $i'\in V_i^0$ and $f'\in V_f$, the map
  \[(f',i')\in V_f\times V_i^0 \mapsto S^0(f',i')_{|U_x}\]
  is continuous into $Mor^r(\mathcal L_{|U_x}, M)$.

  As, $K$ is compact, we get a finite open cover of $K$ by such open subsets $U_x$ 
  on which the restriction of $S^0$ satisfies the above regularity property. 
  By taking $V_i^0$ and $V_f$ small enough to be convenient for all the subsets of this finite cover, we get the continuity of the following continuous map:
  \[S^0\;:\;(f',i')\in V_f\times V_i^0 \mapsto S^0(f',i')\in Mor^r(\mathcal L_{| V}, M).\]
where $ V$ is the union of the finite open cover of $K$.

As $S^0$ takes its values in the set of the composition of the immersion $I$ with the section of $(F,\mathcal F)\rightarrow (L,\mathcal L)$, $S^0$ takes its values in the set of immersions from $\mathcal L_{| V}$ into $M$.

\end{proof}
By conclusion 2 of Lemma \ref{lem6'}, for any $f'\in V_f$ and $i'\in V_i^0$, we shall identify $S^0(f',i')$ with the $C^r$ section of the bundle: $(F,\mathcal F)\rightarrow (L,\mathcal L)$ restricted to $V$.

For a function $\rho\in Mor(\mathcal L,[0,1])$, with compact support included in $V$ and equal to $1$ on a neighborhood $V'$ of $L'$, we define by using the above identification:

\[S\; :\; V_f\times V_i^0\rightarrow Im^r(\mathcal L,M)\]
\[(f',i')\mapsto S_{f'}(i'):= \left\{\begin{array}{cl} \rho(x)\cdot S^0(f',i')(x)& \mathrm{if}\; x\in V\\
i(x)& \mathrm{else}\end{array}\right.\]
For $f'\in V_f$, we want to show that $S_{f'}$ has a unique fixed point $i(f')$.
 
The space $\Gamma$ of $C^r$ sections of $(F,\mathcal F)\rightarrow (L,\mathcal L)$ whose restriction to the complement of $V$ is equal to $0$ (that is $i$ in the identification) is a Banach space. Unfortunately $S$ is not a contraction for such a space. But it is a contraction when $\Gamma$ is endowed with the $C^0$-uniform norm $d_{C^0}$ induced by the norm of $F$.
                                                           
\subsection{The $C^0$-contraction}\label{C0contraction}
By definition of $S$, the $C^0$-contraction of $S$ follows from the $C^0$-contraction of $S^0$.
 
For $x\in V$, we shall identify the foliations $(F_{x'}^\eta)_{x'\in \mathcal L_{x}^\eta}$
and $(F_{y}^\eta)_{y\in \mathcal L_{f^*(x)}^\eta}$ to $\mathbb R^{n-d}\times \mathbb R^d$.

For $f'\in V_f$ and $i'\in V_i^0$, the point $y=S^0(f',i')(x)$ is a solution of the equation:
\[f'(y)=i'\circ p_2\circ f'(y),\]
where $p_2\;:\; \mathbb R^{n-d}\times \mathbb R^d\rightarrow \mathbb R^d$ is the canonical projection. Thus, the point $y=S(f',i')(x)$ is also a solution of:
\[p_1\circ f'(y)=p_1\circ i'\circ p_2\circ f'(y),\]
with $p_1\;:\; \mathbb R^{n-d}\times \mathbb R^d\rightarrow \mathbb R^{n-d}$ is the canonical projection.

As $S^0(f',i')(x)$ belongs to $F_x^\eta$, in these identifications, it is equal to some $(v,x)\in \mathbb R^{n-d}\times \mathbb R^d$.

As $\mathcal L_{f^*(x)}^\eta$ is precompact, in these identifications, $p_1\circ i'$ belongs to the Banach space of $C^1$-bounded maps $C^1_b(\mathcal L^\eta_{f^*(x)},\mathbb R^{n-d})$. 
 
Let us apply the implicit function theorem with the following $C^1$-map from a neighborhood of $0$:
\[\Psi_{x,f'}\; : \; C^1_b(\mathcal L_{f^*(x)}^\eta,\mathbb R^{n-d})\times \mathbb R^{n-d}\rightarrow \mathbb R^{n-d}\]
\[(j,v)\mapsto p_1\circ f'(y)-p_1\circ j\circ p_2\circ f'(y),\]
where $y=(x,v)$.

 We may assume that $(\Psi_{x,f'})_{x\in K,f'\in V_f}$ is locally a continuous family of $C^1$-maps, since the charts of $\mathcal L$ and the trivializations of $F$ locally  provide identifications of $(F_{x'}^\eta)_{x'\in \mathcal L_x^\eta}$ and $(F_{y}^\eta)_{y\in \mathcal L_{f^*(x)}^\eta}$ which vary continuously with $x\in V$. 
 
 Also, $\Psi_{x,f}(0,0)$ is zero for any $x\in V$, and $T\Psi_{x,f'}(0,0)(\partial v)=p_1\circ Tf'(\partial v)$ is invertible.
 
 Thus, by precompactness of $V$, the implicit function theorem implies that $i'\mapsto S^0(f',i')(x)$ is of class $C^1$, for any $x\in V$ and $f'$ close to $f$. Moreover, its differential depends continuously on $x$, $i'$ and $f'$.
 The differential:
 \[TS^0({f},i)(x)(j)= \partial_v\Psi_{(x,f)}(0,0)^{-1}\circ T\Psi_{(x,f)}(0,0)(j)=(p_2\circ Tf_{|TI(F_x)})^{-1}\circ j\big(f^*(x)\big)\]
is contracting by normal expansion. By continuity,  $TS_{f'}(j)(x)(\partial i)$ is also $\lambda$-contracting for $f'$ close to $f$ and $i'$ close to $i$.

Thus, by the mean values theorem, $S^0_{f}$ and so $S_{f'}$ are $\lambda$-contracting on the intersection of $V_i^0$ with the $\epsilon$-ball centered at $i$, and send this ball into a $\lambda\cdot \epsilon$-ball, for any $\epsilon>0$.

Since $S_f(i)=i$, by continuity of $f'\mapsto S_{f'}$ and by restricting $V_f$, for every $f'\in V_f$, 
$S_{f}$ sends $cl(B_{C^0}(i,\epsilon))$ into $B_{C^0}(i,\epsilon)$.

\subsection{The $1$-jet space}\label{r=1}

Contrarily to what happened for the $C^0$-topology, the map $S_f$ is not necessarily contracting in the $C^1$-topology, even when $L$ is compact (and so $S=S^0$). However, we are going to show that the backward action of $Tf$ on the Grassmannian of $d$-planes of $T\mathcal F$ is contracting, at the neighborhood of the distribution induced by $T\mathcal L$. Actually, it is the $C^r$-endomorphism of a neighborhood of the zero section $U^*$ of $(F,\mathcal F)$,  \[\hat f' \; :\; z\in U^*\mapsto I^{-1}_{f^*\circ \pi(z)}\circ f'\circ I(z)\]
 defined as in section \ref{r=1c} which acts on the Grassmannian of $T\mathcal F$ by backward image.
 
Let us define a norm and a vector structure on a neighborhood of $T\mathcal L$ in the Grassmannian of $d$-planes of $T\mathcal F$. Let $\chi_0$ be a continuous, horizontal distribution of $d$-planes of $(F,\mathcal F)\rightarrow (L,\mathcal L)$. By horizontal we mean that for every $y\in F$, the $d$-plane $\chi_0(y)$ is sent onto $T_{|\pi(y)}\mathcal L$ by $T_{y}\pi$.

 Let $\Psi\; :\; (y,t)\in F\times \mathbb R\mapsto t\cdot y\in F$. The map $\Psi$ is a $C^r$-morphism of $(F,\mathcal F)\times \mathbb R$ onto $(F,\mathcal F)$. Let $\chi$ be the horizontal distribution defined by:
\[\chi(y)=\left\{\begin{array}{cl} T\Psi_{(y,\|y\|)}\Big(\chi_0\big(\frac{y}{\|y\|}\big)\Big)& \mathrm{if}\; y\not=0\\
T_y \mathcal L& \mathrm{if} \; y=0\end{array}\right.\]
For all $x\in L$ and $y\in F_x$, the vector space $T_x\mathcal L^*\otimes F_x$ of linear maps from $T_x\mathcal L$ to $F_x$ is isomorphic to the space $\chi(y)^*\otimes F_x$, by the isomorphism which associates to $l\in T_x\mathcal L^*\otimes F_x$ the map $l\circ T\pi_{|\chi(y)}$.

As $F_x$ is canonically isomorphic to the subspace of $T_y\mathcal F$ tangent to $F_x$, via the graph map we shall identify $\chi(y)^*\otimes F_x$ (and so $T_x\mathcal L^*\otimes F_x$) to a neighborhood of $\chi(y)$ in the Grassmannian of $d$-planes of $T_y\mathcal F$. We denote by $P^1_y$ such a vector space that we endow with the norm subordinate to those of $T_x\mathcal L$ and $F_x$. 
We denote by $P^1$ the vector bundle over $F$ whose fiber at $y\in F$ is $P^1_y$.\\

By normal expansion, for all $f'$ $C^1$-close to $f$ and for all small $x\in \pi^{-1}(V)$ for any small $d$-plane $l\in P^1_{\hat f'(x)}$, the preimage by $T_x\hat f'$ of the $d$-plane $l$ is a small $d$-plane $l'\in P^1_x$.
    
Let us show the following lemma:
\begin{lemm}
For all small $\epsilon>0$, and then $V_f$ small enough, for all $f'\in V_f$, $x\in \pi^{-1}(V)$ and $l\in P^1_{\hat f'(x)}$ both with norm at most $\epsilon>0$, the norm of $\phi_{f'x}(l):=l'\in P^1_x$ is then less than $\epsilon$. Moreover, the map $\phi_{f'x}$ is $\lambda$-contracting.\end{lemm}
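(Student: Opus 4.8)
\emph{Plan.} This statement is the exact mirror, under time reversal, of Lemma \ref{contracP1}, so the plan is to run that proof with ``preimage'' in place of ``image'' and with normal \emph{expansion} in place of normal \emph{contraction}. First I would introduce the projection $\pi_v$ of $T\mathcal F$ onto $F$ parallelly to $\chi$ and the projection $\pi_h$ of $T\mathcal F$ onto $\chi$ parallelly to $F$, and set $Tf'_h:=\pi_h\circ T\hat f'$ and $Tf'_v:=\pi_v\circ T\hat f'$. For a small $d$-plane $l\in P^1_{\hat f'(x)}$, the plane $l'=\phi_{f'x}(l)\in P^1_x$ is characterised by the requirement that each $(e',l'(e'))\in\chi(x)\oplus F_{\pi(x)}$ be sent by $T_x\hat f'$ into the graph of $l$, i.e. $Tf'_v(e',l'(e'))=l\bigl(Tf'_h(e',l'(e'))\bigr)$. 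Writing $T_x\hat f'$ in block form
\[T_x\hat f'=\begin{pmatrix}A&B\\ C&D\end{pmatrix}\]
with respect to $T_x\mathcal F=\chi(x)\oplus F_{\pi(x)}$ and $T_{\hat f'(x)}\mathcal F=\chi(\hat f'(x))\oplus F_{\pi(\hat f'(x))}$, and noting that $e'\mapsto Tf'_h(e',l'(e'))$ is invertible for $\epsilon$ and $V_f$ small (because $f^*$ is an immersion), this relation solves to give $l'=(D-lB)^{-1}(lA-C)$. Hence $\phi_{f'x}$ is an algebraic map whose coefficients depend continuously on $f'$ and on $x$ in fixed trivialisations.

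The core computation is at $f'=f$ with $x$ on the zero section. There $\chi$ coincides with $T\mathcal L$, which is $Tf$-invariant, so that $C=\pi_v\circ Tf|_{T\mathcal L}=0$, $A=Tf^*$, and, through the canonical identification of $F$ with $E^u$ and of the $F$-norm with the $E^u$-norm, $D$ is the normal expansion $p_u\circ Tf|_{E^u}$. Therefore $\phi_{fx}(0)=D^{-1}(0)=0$, and the differential of $\phi_{fx}$ at $0$ is the linear map $m\mapsto D^{-1}\circ m\circ A$, of norm at most $\|A\|\cdot\bigl(\inf_{\|w\|=1}\|Dw\|\bigr)^{-1}$. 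This is where the hypothesis $r\ge1$ is used: $r$-normal expansion gives $\inf_{\|w\|=1}\|Dw\|>\lambda^{-1}\max(1,\|A\|^r)\ge\lambda^{-1}\|A\|$, so the displayed quantity is $<\lambda$, uniformly over the relevant compact set by the strictness of the normal-expansion inequality. Thus $\phi_{fx}$ is $\lambda$-contracting near $0$ whenever $x$ lies on the zero section.

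Finally I would propagate this by continuity exactly as at the end of Lemma \ref{contracP1}: since the coefficients of $\phi_{f'x}$ depend continuously on $(f',x)$, for $\epsilon>0$ small and then $V_f$ small enough the differential of $\phi_{f'x}$ has norm $\le\lambda$ on $cl(B_{P^1_{\hat f'(x)}}(0,\epsilon))$ for every $f'\in V_f$ and every $x\in\pi^{-1}(V)$ with norm $\le\epsilon$, so $\phi_{f'x}$ is a $\lambda$-contraction there; and since $\phi_{fx}(0)=0$ on the zero section, continuity also yields $\|\phi_{f'x}(0)\|\le(1-\lambda)\epsilon$, whence the closed $\epsilon$-ball of $P^1_{\hat f'(x)}$ is sent into the $\epsilon$-ball of $P^1_x$. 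The main obstacle, as in the contracting case, is really bookkeeping: checking that $\hat f'$ (hence $\phi_{f'x}$) is well defined on the needed neighbourhood of the zero section, that $C$ genuinely vanishes on the zero section because $\chi$ is horizontal and agrees there with the $Tf$-invariant $T\mathcal L$, and — above all — that the comparison $\|D^{-1}\|\cdot\|A\|<\lambda$ is exactly what $r$-normal expansion with $r\ge1$ provides; this last point is the one place the normal-hyperbolicity hypothesis enters, and it is the crux of the lemma.
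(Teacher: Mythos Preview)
Your argument is essentially the paper's own proof, written in block-matrix notation; the formula $l'=(D-lB)^{-1}(lA-C)$ is exactly the paper's expression $l'(e')=(Tf'_v-l\circ Tf'_h)_{|F}^{-1}(l\circ Tf'_h-Tf'_v)(e')$, and your contraction estimate via the differential $m\mapsto D^{-1}mA$ at $l=0$ together with the continuity propagation matches the paper step for step.

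One correction, however: the invertibility you invoke to ``solve'' for $l'$ is misplaced. You cite the invertibility of $e'\mapsto Tf'_h(e',l'(e'))=Ae'+Bl'(e')$ ``because $f^*$ is an immersion'', but in the normally \emph{expanded} setting $f^*$ is merely a pullback of an endomorphism and is \emph{not} assumed to be an immersion (that hypothesis belongs to the normally contracting Theorem \ref{Maincontract}); moreover that horizontal invertibility plays no role in passing from $Ce'+Dl'(e')=l(Ae'+Bl'(e'))$ to $l'=(D-lB)^{-1}(lA-C)$. What is actually needed---and what the paper invokes---is the invertibility of $(D-lB)$, which holds for small $l$ precisely because $D=\pi_v\circ T\hat f'_{|F}$ is an isomorphism by \emph{normal expansion}. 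Once you replace your parenthetical justification accordingly, the proof is correct and coincides with the paper's.
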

\begin{proof}
Let $\pi_v$ be the projection of $T\mathcal F$ onto $F$ parallelly to $\chi$. Let $\pi_h$ be the projection of $T\mathcal F$ onto $\chi$ parallelly to $F$.

For any vector $e'\in \chi(x)$, the point $(e',l'(e'))$ is sent by $T_x\hat f'$ to 
\[\big(Tf'_h(e',l'(e')),Tf'_v(e',l'(e'))\big), \quad \mathrm{with}\; Tf'_h:= \pi_h \circ T\hat f'\; \mathrm{and}\; Tf'_{v}:= \pi_v\circ T\hat f'.\]

Let $e:=  Tf'_h(e',l'(e'))$. By definition of $l'$, the point $(e',l'(e'))$ is sent by $T_xf'$ to
$(e,l(e))$.

Therefore, we have $l(e)= Tf'_v(e',l'(e'))$ and $l(e)=l\circ Tf'_h(e',l'(e'))$.
\[\Rightarrow Tf'_v(e',l'(e'))= l\circ Tf'_h(e',l'(e'))\]
\[\Rightarrow (Tf'_v-l\circ Tf'_h)(l'(e'))=(l\circ Tf'_h-Tf'_v)(e')\]
By normal expansion, for $\epsilon$ and $V_f$ small enough, the map $(Tf'_v-l\circ Tf'_h)_{|F}$ is bijective. Consequently:
\[l'(e')=(Tf'_v-l\circ Tf'_h)_{|F}^{-1}(l\circ Tf'_h-Tf'_v)(e').\]
Hence, the expression of $l'=\phi_{f'x}(l)$ depends algebraically  on $l$, and the coefficients of this algebraic expression depends continuously on $f'$ or $x$, with respect to some trivializations.

When $f'$ is equal to $f$ and $x$ belongs to the zero section of $F_{|V}$, the map 
\[\phi_{fx} \;:\;l'\mapsto (Tf_v-l\circ Tf_h)_{|F}^{-1}(l\circ Tf_h)\]
is $\lambda$-contracting for $l$ small, by normal expansion.

Thus, for $\epsilon$ small enough, for all $x\in B_{F|V}(0,\epsilon)$ and $l \in B_{P^1_{\hat f(x)}}(0,\epsilon)$, the tangent map of $\phi_{fx}$ has a norm less than $\lambda$.
Therefore, by taking  $\epsilon$ slightly smaller, for $V_f$ small enough, the tangent map $T\phi_{f'x}$ has a norm less than $\lambda$ and hence $\phi_{f'x}$ is a $\lambda$-contraction on $cl\big({B}_{P_{\hat f(x)}}(0,\epsilon)\big)$. 

As, for $x\in V$, the map $\phi_{fx}$ vanishes at $0$, for $V_f$ small enough, the norm $\phi_{f'x}(0)$ is less than 
\[(1-\lambda)\cdot \epsilon.\]

Consequently, by $\lambda$-contraction, the closed $\epsilon$-ball centered at the $0$-section is sent by $\phi_{f'x}$ into the $\epsilon$-ball centered at $0$, for all $x\in B_{F|V}(0,\epsilon)$ and $f'\in V_f$.
 \end{proof}

For any $C^1$-section $i'$ of $(F,\mathcal F)\rightarrow (L,\mathcal L)$, the tangent space to the image of $i'$ at $i'(x)$, for every $x\in L$, is an element $\nabla i'(x)$ of $P^1_{i'(x)}$ and so of $T_x\mathcal L^*\oplus F_x$.

By conclusion 2 of Lemma \ref{lem6'}, for $x_0\in V$, for $f'\in V_f$, for $j\in V_i^0\cap \Gamma$, the $d$-planes $\nabla j(x_0)$ is sent by $T_x\hat {f'}$ to  $\nabla S^0(f',j)(y_0)$, with $x:= j(x_0)$ and $y_0:=\pi\circ \hat f'(x)$. Thus $\nabla S^0(f',j)(y_0)$ is equal to $\phi_{f'x}(\nabla j(y_0))$.

We remind that the interest of such a distribution $\chi$ is that for any morphism $p \in Mor^1 (\mathcal L,\mathbb R^+)$, we have: 
\begin{equation}\label{nabla2}\nabla(p \cdot i')(x)=T p(x)\cdot i'(x)+p(x)\cdot \nabla i'(x).\end{equation}    
Such equality is proved in section \ref{r=1c}, Equation (\ref{nabla}).

Thus we have for any $x_0\in V$:
\[\nabla S_{f'}(j)(x_0)=\rho(x_0)\cdot \phi_{f'x} (\nabla j(y_0)) +T\rho (x_0)\cdot z,\quad \mathrm{with}\; z:=S_{f'}(j)(x_0)\; \mathrm{and}\; y_0:=\pi\circ \hat {f}'(x)\]

In other notations: 
\[\nabla S_{f}(j)(x_0)=\phi'_{f'z}(\nabla j(y_0)),\quad \mathrm{ 
with}\;  \phi'_{f'z}\; :\; l\in P^1_{\hat f'(z)}\mapsto \rho \circ \pi(z)\cdot \phi_{f'z} (l) +T_{\pi(z)}\rho\cdot z\in P^1_{z},\]
 for any $z\in B_{F|V}(0,\epsilon)$, for $z$ small enough.  

We notice that $\phi'_{f' z}$ is as much contracting as $\phi_{f'z}$ and sends the $\epsilon$-ball of $P^1_{\hat f(z)}$ centered at 0 into the $\epsilon$-ball of $P^1_{z}$ centered at $0$.

\subsection{Proof of Theorem \ref{Main} when $r=1$ and in the normally expanded case}
Let us  begin by proving the existence of closed neighborhood $V_{i}$ of $i\in \Gamma$ sent by $S_{f'}$ into itself, for any $f'\in End^r(M)$ close to $f$.

For $\epsilon>0$, then $\epsilon'>0$ and finally $V_f$ small enough, the following set is included in $V_i^0$: 

\[V_i:= 
\Big\{i'\in \Gamma; \quad d_{0}(i,i')\le \epsilon',\;\mathrm{and}\; \; \|\nabla i'(x)\|\le \epsilon,\; \forall x\in V\Big\}.\]

 Therefore, by the  
two last steps, $S_{f'}$ sends $V_i$ into itself, for every $f'\in V_f$. 

We already showed that $S$ is $C^0$-contracting for the $C^0$-topology.
\begin{lemm}\label{lemm 7.3} For every $f'\in V_f$, for every $\delta>0$, there exists $N>0$ such that: for all $i',i''\in S^{N}(V_i)$, for every $x\in L$:
\[d(\nabla i'(x), \nabla i''(x))<\delta.\]\end{lemm}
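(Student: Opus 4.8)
The plan is to iterate the identity $\nabla S_{f'}(j)(x)=\phi'_{f'z}\bigl(\nabla j(y)\bigr)$ established in Subsection~\ref{r=1} and to play off the two contraction properties already available: each fibre map $\phi'_{f'z}$ is $\lambda$-contracting and carries $\overline{B_{P^1}(0,\epsilon)}$ into itself, while $S_{f'}$ is $\lambda$-contracting for the $C^0$-distance on $V_i$. First I would dispose of the trivial part of the statement. Where $\rho$ vanishes identically in a neighbourhood of $x$ one has $S_{f'}(i')(x)=i(x)$ for every section $i'$, and from Equation~(\ref{nabla2}), $\nabla S_{f'}(i')(x)=0$ there; so the inequality holds for free (with $N\ge 1$) outside the compact set $\mathrm{supp}(\rho)$, and it is precisely this compactness that will legitimate the uniform-continuity arguments below. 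Thus I only need to control $x\in\mathrm{supp}(\rho)$.

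Unwinding the identity $N$ times I would write $\nabla S_{f'}^{N}(j)(x)=\Phi_{j,N}\bigl(\nabla j(y_N)\bigr)$, where $\Phi_{j,N}:=\phi'_{f'z_1}\circ\cdots\circ\phi'_{f'z_N}$ and the auxiliary data $z_1,\dots,z_N\in F$ and $x=y_0,y_1,\dots,y_N\in L$ are generated recursively from $x$, $f'$ and the $C^0$-values of the iterates $S_{f'}^m(j)$, $0\le m\le N$. The decisive point is that these intermediate data depend on $j$ only through that zeroth-order information, never through the $1$-jets of the iterates (and should the recursion run into the region where $\rho\equiv 0$, the composition simply truncates, which only reinforces the estimates). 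Since $S_{f'}$ preserves $V_i$ and every $j\in V_i$ satisfies $\|\nabla j\|\le\epsilon$ pointwise, and since each $\phi'_{f'z_k}$ is $\lambda$-contracting, the composition $\Phi_{j,N}$ is $\lambda^N$-Lipschitz on $\overline{B_{P^1}(0,\epsilon)}$ and maps it into itself.

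Then I would split $N=N_1+N_2$ and, given $i',i''\in S_{f'}^{N}(V_i)$, write $i'=S_{f'}^{N_1}(j')$, $i''=S_{f'}^{N_1}(j'')$ with $j',j''\in S_{f'}^{N_2}(V_i)\subseteq V_i$. Applying the unwound identity with $N_1$ steps and the triangle inequality,
\[d\bigl(\nabla i'(x),\nabla i''(x)\bigr)\ \le\ \lambda^{N_1}\cdot 2\epsilon\ +\ \sup_{l\in\overline{B_{P^1}(0,\epsilon)}} d\bigl(\Phi_{j',N_1}(l),\Phi_{j'',N_1}(l)\bigr),\]
since both $1$-jet arguments $\nabla j'(y'_{N_1})$ and $\nabla j''(y''_{N_1})$ lie in $\overline{B_{P^1}(0,\epsilon)}$, on which $\Phi_{j',N_1}$ is $\lambda^{N_1}$-Lipschitz. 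The first term is $\le\delta/2$ as soon as $N_1$ is large (this fixes $N_1$, hence fixes the compact region carrying the relevant base points). For the second term a telescoping estimate along the $N_1$ factors (each $\lambda$-Lipschitz) bounds it by $\frac{1}{1-\lambda}\max_{1\le k\le N_1}\sup_l d\bigl(\phi'_{f'z'_k}(l),\phi'_{f'z''_k}(l)\bigr)$, the two fibres being compared through local trivialisations over the precompact region swept by the $z_k$; by uniform continuity of $\phi'$ in its base point there, it suffices to make $z'_k$ and $z''_k$ uniformly close for $1\le k\le N_1$. But $z'_k$, $z''_k$ are values of the iterates $S_{f'}^m(j')$, $S_{f'}^m(j'')$ with $m\ge N_2$, at the base points $y'_{k-1}$, $y''_{k-1}$; the $C^0$-contraction gives $d_{C^0}(S_{f'}^m(j'),S_{f'}^m(j''))\le\lambda^{N_2}\cdot 2\epsilon'$, and a finite induction on $k$ (with $N_1$ held fixed), using the uniform continuity of $\hat f'$ and the uniform equicontinuity of the sections in $V_i$, shows that $d(y'_{k-1},y''_{k-1})$ is as small as desired once $N_2$ is large. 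Hence the second term is $\le\delta/2$ for $N_2$ large, and $N=N_1+N_2$ with both summands large proves the lemma.

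The step I expect to be the genuine obstacle is the last one: the $1$-jet map $j\mapsto\nabla S_{f'}(j)$ is not an honest fibre-contraction but a skew product over the $C^0$-dynamics whose ``fibre over $x$'' is read at a $j$-dependent location, so the bookkeeping of how the auxiliary points $z_k,y_k$ depend on the section — together with the ``composition of nearby contractions stays nearby'' estimate, with constants uniform over $f'\in V_f$ — is where all the care goes; the contraction of $\Phi_{j,N}$, the $C^0$-contraction and the trivial case outside $\mathrm{supp}(\rho)$ are immediate from the preceding subsections.
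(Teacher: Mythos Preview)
Your argument is correct and complete enough: the split $N=N_1+N_2$, with $N_1$ killing $\lambda^{N_1}\cdot 2\epsilon$ and then $N_2$ chosen (for fixed $N_1$) so that the finitely many base points $z'_k,z''_k$ are forced together by the $C^0$-contraction and the uniform equicontinuity of sections in $V_i$, does the job. The telescoping bound $\tfrac{1}{1-\lambda}\max_k\sup_l d(\phi'_{f'z'_k}(l),\phi'_{f'z''_k}(l))$ is right, and your caveat about comparing maps between nearby fibres through local trivialisations over the precompact region is exactly the point that needs care.

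The paper takes a different, somewhat slicker route. Instead of tracking the skew product by hand, it introduces the compact sets $U_k:=\overline{\bigcup_{j\in V_i}S_{f'}^k(j)(L)}\subset F$ and the space $\sigma^{\epsilon,k}$ of continuous sections of $P^1_{|U_k}$ with fibre norm at most $\epsilon$, then defines a fibrewise $\lambda$-contracting operator $f'^{\#}:\sigma^{\epsilon,k}\to\sigma^{\epsilon,k+1}$ that mimics the passage $\nabla j\mapsto\nabla S_{f'}(j)$ at the level of sections over $U_k$. After $N'$ iterations the image has uniform diameter $<\delta$; pick any reference section $\sigma_{f'}$ there. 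Then, choosing $N\ge N'$ so large that the $C^0$-diameter of $S_{f'}^N(V_i)$ is below the modulus of uniform continuity of $\sigma_{f'}$ on the compact $U_{N'}$, one gets for $i',i''\in S_{f'}^N(V_i)$ the three-term estimate
\[
d(\nabla i'(x),\nabla i''(x))\le d\bigl(\nabla i'(x),\sigma_{f'}(i'(x))\bigr)+d\bigl(\sigma_{f'}(i'(x)),\sigma_{f'}(i''(x))\bigr)+d\bigl(\sigma_{f'}(i''(x)),\nabla i''(x)\bigr)\le 3\delta.
\]
The payoff of the paper's approach is that all the ``base point depends on the section'' bookkeeping you flag as the genuine obstacle is absorbed into the single continuous object $\sigma_{f'}$, whose uniform continuity on a compact replaces your telescoping-over-varying-fibres estimate; your approach, by contrast, stays closer to the raw iteration and avoids introducing the auxiliary section space, at the price of the inductive base-point control.
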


Therefore, the sequence $(S_{f'}^{k}(i))_k$ converges in $Im^1(\mathcal L,M)$ to some $C^1$-immersion $i'\in V_i$. Such an immersion is unique by the $C^0$-contraction of $S_{f'}$ and so, is a fixed point of $S_{f'}$. We define the $C^1$-morphism $f'^*:= x\mapsto \pi\circ \hat {f}'\circ i'(x)$ from $\mathcal L_{|L'}$ to $\mathcal L$. Such a morphism is equivalent to $f^*_{|L'}$, and satisfies with $i(f')$ conclusions $(1)$ and $(2)$ of Theorem \ref{Main}. 

\begin{proof}[Proof of Lemma \ref{lemm 7.3}]
 For any $\delta>0$, there exists $N'\ge 0$ such that $\lambda^{N'}\cdot \epsilon$ is less than $\delta$.
  
For $k\ge 0$, let $U_k$ be the closure of $\cup_{i'\in V_i} S^k_{f'}(i')$. Thus $U_{N'}$ is identified to a compact subset of $F$. 

Let $\sigma^{\epsilon,k}$ be the space of continuous sections of the bundle $P^1_{|U_k}$ over $U_k$ whose fiber at $x\in U_k$ consists of planes of $P^1_x$ with norm less than $\epsilon$.

The following map is well defined and contracting:
 \[f'^{\#}:\sigma^{\epsilon,0}\longrightarrow \sigma^{\epsilon, 1}\]
\[ \sigma \longmapsto \left[x\in U_1\mapsto \left\{\begin{array}{cl}
 \phi_{f'x}\Big(\sigma\circ \hat f\big(x/\rho \circ \pi(x)\big)\Big)& \mathrm{if} \; \rho \circ \pi(x)\not=0\\
 0& \mathrm{else}\end{array}\right.\right]\]
 
 Thus there exists a continuous section $\sigma_{f'}\in \sigma^{\epsilon,N'}$ in $f'^{\#^{N'}}(\sigma^{\epsilon,0})$.
 Moreover, the diameter of $\sigma^{\epsilon,N'}$ is less than $\delta$ for the uniform norm induced by the one of $P^1$.
 
 By continuity of $\sigma_{f'}$ and compactness of $U_{N'}$, there exist $e>0$ such that for every $x\in V$, $(y,y')\in U_{N'}\cap F_x$, if $\|y-y'\|<e$ then $\|\sigma_{f'}(y)-\sigma_{f'}(y')\|$ is less than $\delta$.
 
 Let $N\ge N'$ such that $\lambda^N\cdot \epsilon$ is less than $e$. Thus the $C^0$-diameter of $S^N_{f'}(V_i)$ is less than $e$.
 
 Consequently, for $(i',i'')\in S^{N}_{f'}(V_i)$, the norm $\|\nabla i'(x)-\nabla i''(x)\|$ is less than:
\[d\Big(\nabla i'',\sigma_{f'}\big(i''(x)\big)\Big)+ 
d\Big(\sigma_{f'}(i''(x)), \sigma_{f'}(i'(x))\Big)+
d\Big(\sigma_{f'}\big(i'(x)),\nabla i'(x)\Big)\]
\[\Rightarrow d(Ti''(T_x\mathcal L),Ti'(T_x\mathcal L))\le 3\delta\]
 The last inequality concludes the proof of the lemma.
\end{proof}
Let us prove conclusion $(3)$ of Theorem \ref{Main}.

Let $(y_n)_{n\ge 0}\in M^{\mathbb N}$ be a $f'$-orbit which is $\epsilon$-close to the image by $i$ of a $\epsilon$-pseudo-orbit $(x_n)_{n\ge 0}\in L'^{\mathbb N}$ of $f^*$ respecting the plaques. 
Let $y_n':= (I_{x_n}^\eta)^{-1}(y_n)$ and $z_n:= \pi(y'_n)$.

We want to show that $i(f')(z_n)$ is equal to $y_n$, for every $n\ge 0$.

Let $n> 0$ be such that $i(f')(z_n)$ and $y_n$ are at distance in $F_{z_n}$ at least:
\begin{equation}\label{eq pour conclusion 3} \frac{\lambda+1}{\lambda}\sup_{j> 0}d_{F_{z_j}}\big(i(f')(z_j),y_j\big).\end{equation}

For $\epsilon>0$ small, the sequence $(z_j)_{j\ge 0}$ is close to $L'$. Also we can construct a section $i'\in V_i$ equal to $y_j$ at $z_j$ for $j> 0$, and such that the $C^0$-distance between $i'$ and $i(f')$ is the one between $\sup_{j> 0}d_{F_{z_j}}\big(i(f')(z_j),y_j\big)$.

We remind that on $V'\subset V$ the maps $S$ and $S^0$ are equal, since $\rho$ is there equal to 1.
 
By contraction of $S^0$, the $C^0$-distance between $i(f')_{|V'}$ and $S^0(f',i')_{|V'}$ is less than $\lambda\cdot d(i(f')(z_n),y_n)$.

By conclusion $(2)$ of Lemma \ref{lem6}, $S^0_{f'}(i')$ sends $(z_j)_{j\ge 0}$ to $(y_j)_{j> 0}$. By Equation (\ref{eq pour conclusion 3}), for every $j> 0$, the point $z_j$ is sent by $i(f')$ to $z_j$. 
By normal expansion, the point $z_0$ is sent by $i(f')$ to $z_0$.

\subsection{General case: $r>1$}
Unfortunately, as we deal with maps which have possibly singularities along the leaves, the action of $Tf$ on the Grassmannian is not as well defined as in Theorem \ref{Maincontract}. I do not know any other way to prove this result than the following calculus.

As we deal with the $C^r$-topology, we shall generalize the Grassmannian concept as follow:

For $x\in U^*$, let $G_x^r$ be the set of the $C^r$-$d$-submanifolds of $M$ which contain $I(x)$, quotiented by the following $r$-tangent relation:

Two such submanifolds $N$ and $N'$ are equivalent if there exists a chart $(U,\phi)$ of a neighborhood of $I(x)\in M$, which sends $N\cap U$ onto $\mathbb R^{d}\times \{0\}$ and sends $N'\cap U$ onto the graph of a map from $\mathbb R^{d}$ into $\mathbb R^{n-d}$, whose $r$-first derivatives vanish at $\phi(x)$.

We notice that for $r=1$, this space is the Grassmannian of $d$-planes of $T_x\mathcal F\approx T_{I(x)}M$.

As we are interested in the submanifolds ``close'' to the embedding of small $\mathcal L$-plaques by $i$,  we restrict our study to the $d$-submanifolds containing $I(x)$ and transverse to $F_{\pi(x)}^\epsilon$.

The preimage of a submanifold of this equivalent class by the map $\exp\circ T_xI$ is a graph of a $C^r$-map $\overline{l}$ from $\chi(x)$ to $F_{\pi(x)}$, with $\exp$ the exponential associated to a Riemannian metric of $M$.

Moreover, its $r$-tangent equivalence class can be identified to the Taylor polynomial of $\overline{l}$ at $0$:
\[\overline{l}(u)=T_0\overline{l}(u)+\frac{1}{2}T^2_0\overline{l}(u^2)+\cdots +\frac{1}{r!} T^r_0\overline{l}(u^r)+o(\|u\|^r),\]
where $u$ belongs to $\chi(x)$, the $k^{th}$-derivative $T_0^k\overline{l}$ belongs to the space $L_{sym}^k(\chi(x),F_{\pi(x)})$ of $k$-linear symmetric maps from $\chi(x)^k$ into $F_{\pi(x)}$. We notice that we abused of notation by writing $u^k$ instead of $(u,\dots,u)$.

The map $l(u):= \sum_{k=1}^n \frac{1}{k} T^k_0\overline{l}(u)$ is an element of the vector space:
\[P_x^r:= \oplus_{k=1}^r L_{sym}^k (\chi(x),F_{\pi(x)}).\]

Conversely, any vector $l\in P_x^r$ that we write in the form:
\[l\::\;u\in \chi(x)\mapsto \sum_{k=1}^r l_k(u^k)\]
is the class of the following $C^r$-$d$-submanifold of $M$:
 \[\exp\circ TI \Big(\{(u+l(u));\; u\in \chi(x)\; \mathrm{and} \; \|u\|<r_i(x)\big\}\Big),\]
 where $r_i$ depends continuously on the injectivity radius of $\exp$.
 
The linear map $l_1$ from $\chi(x)$ to $F_{\pi(x)}$ is called the \emph{linear part of $l$}. We notice that $l_1$ belongs to $P^1_x$.

We denote by $P^r$ the vector bundle over $U^*$, whose fiber at $x$ is $P^r_x$.

 By normal expansion, for $U^*$ small enough and $f'$ close enough to $f$, for any point $x\in U^*$ sent by $\hat f'$ to some $y\in U^*$, any $l\in P^r_y$ whose linear part is small enough, the preimage  by $f'$ of a representative of $l$ is a representative of a vector $\phi_{f'x}(l)\in P^r_x$, which depends only on $l$.

Let us show the following lemma:
\begin{lemm}\label{Cr:contractivity}
For every $\epsilon>0$ small enough and then  $V_f$ small enough, for all $f'\in V_f$, $x\in \pi^{-1}(V)\cap U^*$ and $l\in P^r_{\hat f'(x)}$ with linear part of norm at most $\epsilon>0$, the norm of the linear part of $\phi_{f'}(l)$ is then less than $\epsilon$. Moreover, the map $\phi_{f'}$ is of the form 
\[(l_m)_m\in P^r_{\hat f(x)}\mapsto \Big[C_m^{f'}(l_m)+\Psi_m\big((l_k)_{1\le k< m}\big)\Big]_m\in P^r_x\]
where $\Psi_m\big((l_k)_{1\le k\le m}\big)$ does not depend on $(l_i)_{i> m}$. Also $C_m^{f'}$ is a $\lambda$-contracting endomorphism of $L_{sym}^m(\chi(x),F_{\pi(x)})$ endowed with the norm subordinate to $\chi(x)$ and $F_{\pi(x)}$, and $\chi(x)$ is endowed with the norm induced by $T_x\mathcal F$.\end{lemm}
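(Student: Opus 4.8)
The plan is to mimic closely the proof of Lemma in Section \ref{r=1} for $P^1$, but now keeping track of the higher Taylor coefficients. The statement is essentially a structural refinement of the $P^1$-lemma: it says that the induced action $\phi_{f'}$ on the jet bundle $P^r$ is \emph{triangular} with respect to the grading $P^r_x = \oplus_{k=1}^r L^k_{sym}(\chi(x),F_{\pi(x)})$, and that its diagonal blocks $C^{f'}_m$ contract. First I would fix $x\in \pi^{-1}(V)\cap U^*$ with $y:=\hat f'(x)$, take a representative of $l\in P^r_y$ as a graph $u'\mapsto (u'+l(u'))$ over $\chi(y)$ (composed with $\exp\circ T I$), and express its preimage by $\hat f'$ as a graph over $\chi(x)$. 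Exactly as in the $P^1$ case, writing $Tf'_h=\pi_h\circ T\hat f'$ and $Tf'_v=\pi_v\circ T\hat f'$, the preimage graph $u\mapsto (u + \phi_{f'x}(l)(u))$ is obtained by solving, at the level of $r$-jets, the relation that $\hat f'$ maps the preimage graph into the $l$-graph; by normal expansion the relevant map on the $F$-direction is invertible (for $\epsilon$ and $V_f$ small), so $\phi_{f'x}(l)$ is well-defined and depends only on $l$ — this was already observed in the paragraph preceding the lemma.

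Then the key point is to extract the $m$-th Taylor coefficient of $\phi_{f'x}(l)$ by differentiating this implicit relation $m$ times at $0$ and applying the Faà di Bruno / multilinear chain rule. Each application of the chain rule to a composition of $C^r$ maps shows that $T^m_0$ of the composite is a polynomial expression in $T^k_0$ of the factors for $k\le m$; the only place the top coefficient $l_m$ enters \emph{linearly} is through the single term where all $m$ derivatives land on $l$ (all other terms distribute derivatives and hence involve only $l_k$ with $k<m$, together with the fixed jets of $\hat f'$ at $x$). This yields precisely the announced form
\[
\phi_{f'x}\big((l_m)_m\big)=\Big[\,C^{f'}_m(l_m)+\Psi_m\big((l_k)_{1\le k<m}\big)\,\Big]_m,
\]
with $\Psi_m$ independent of $l_i$ for $i>m$, and with $C^{f'}_m$ a linear endomorphism of $L^m_{sym}(\chi(x),F_{\pi(x)})$. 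Unwinding the chain rule, $C^{f'}_m(l_m)$ is essentially the substitution $l_m\mapsto Tf'_v\!\restriction_F\,\circ\,l_m\circ(Tf'_h\restriction_\chi)^{-1,\otimes m}$ up to the bijective correction coming from $(Tf'_v - l_1\circ Tf'_h)\restriction_F$, i.e. $C^{f'}_m$ is conjugate (via the base change $(Tf'_h\restriction_\chi)^{-1}$, whose norm is bounded by the contraction along the leaves composed with $f^*$, and the bounded correction on $F$) to composition with the normal expansion factor. Hence, for $f'=f$ and $x$ on the zero section — where $Tf_h$ is close to $Tf^*$ and the correction term vanishes — normal $r$-expansion (the inequalities of Definition \ref{def2.1}, which give $\|p_u\circ Tf(v_u)\| > \lambda^{-1}\max(1,\|Tf(v_c)\|^r)$) forces $\|C^f_m\| \le \lambda$ for all $1\le m\le r$; by continuity of all the jet data in $(f',x)$, one gets $\|C^{f'}_m\|\le\lambda$ for $f'\in V_f$ and $x$ small. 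The bound on the linear part ($\|C^{f'}_1(l_1)\|<\epsilon$ when $\|l_1\|\le\epsilon$) is exactly the $P^1$-lemma, reproved verbatim.

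The main obstacle I expect is purely bookkeeping: organizing the Faà di Bruno expansion so that the triangular structure and the identification of the diagonal block $C^{f'}_m$ with the expected normal-expansion operator are transparent, rather than drowned in indices. Concretely, one must (i) be careful that $\phi_{f'x}$ is the \emph{preimage} (contravariant) operation, so the base change is by $(Tf'_h\restriction_\chi)^{-1}$ and the relevant expansion rate is $\|Tf'_v\restriction_F\|$ divided by a power of $\|Tf'_h\restriction_\chi\|$ — which by normal $r$-expansion beats $\lambda$ for every $m\le r$; and (ii) check that the "lower-order" remainder $\Psi_m$ genuinely involves no $l_i$ with $i>m$, which is automatic because differentiating the graph of $l$ fewer than the full $m$ times can only produce $l_k$ with $k\le m$, while the extra derivatives go onto the (fixed) jets of $\hat f'$. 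Once this is in place, the contraction estimate $\|\phi_{f'x}(l)\|\le\epsilon$ on the $\epsilon$-ball and $\|\phi_{f'x}(0)\|\le(1-\lambda)\epsilon$ follow as in the $P^1$ case by continuity at the zero section, completing the proof; these final estimates I would state but not grind through, since they are identical in spirit to Lemma in Section \ref{r=1}.
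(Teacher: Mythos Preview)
Your overall strategy --- expand the graph--preimage relation $\hat f'\big(u'+l'(u')\big)=u+l(u)+o(\|u'\|^r)$ in Taylor/jet form, read off a triangular structure in the grading of $P^r$, and identify the diagonal block --- is exactly the paper's approach; the paper carries this out by an explicit multi-index expansion of $J^r_x\hat f'$ rather than by invoking Fa\`a di Bruno by name, but the content is the same, and your argument for why $\Psi_m$ only involves $(l_k)_{k<m}$ is correct.

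Where you go wrong is the identification of $C^{f'}_m$: you have the direction of the linear factors reversed. Write $T_x\hat f'$ in the $(\chi,F)$-splitting with blocks $A=Tf'_h|_\chi$, $B=Tf'_h|_F$, $C=Tf'_v|_\chi$, $D=Tf'_v|_F$. If the graph of $l'$ at $x$ is mapped by $\hat f'$ into the graph of $l$ at $\hat f'(x)$, then $(D-l_1B)\,l'(u')=(l_1A-C)(u')$ at first order, and the degree-$m$ identification gives, at the zero section where $B=C=0$,
\[
C^{f'}_m:\; l_m\longmapsto (Tf'_v|_F-l_1\,Tf'_h|_F)^{-1}\circ l_m\circ (Tf'_h|_\chi)^{\otimes m},
\]
not $Tf'_v|_F\circ l_m\circ (Tf'_h|_\chi)^{-1,\otimes m}$. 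Your formula is the \emph{pushforward}, which in the normally expanded setting is expanding, not $\lambda$-contracting. Correspondingly, the contraction rate of $C^{f'}_m$ is $\|(Tf'_v|_F)^{-1}\|\cdot\|Tf'_h|_\chi\|^m$, and it is exactly $r$-normal expansion (Definition~\ref{def2.1}) that bounds this by $\lambda$ for all $m\le r$; your quoted rate $\|Tf'_v|_F\|/\|Tf'_h|_\chi\|^m$ is the reciprocal. Relatedly, normal expansion imposes no contraction along the leaves, so your remark that $\|(Tf'_h|_\chi)^{-1}\|$ is ``bounded by the contraction along the leaves composed with $f^*$'' is unfounded --- only the \emph{ratio} is controlled. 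Once you swap the inverse to the $F$-side and drop the inverse on the $\chi$-side, the rest of your plan (triangularity, reduction of the linear part to the $P^1$-lemma, continuity in $(f',x)$) goes through and coincides with the paper's proof.
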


For $i'\in \Gamma$ and $l\in \{1,\dots ,r\}$, let $\nabla^l i'$ be the section $\pi_v\circ T^li'$, where $\pi_v$ is the projection of $T\mathcal F$ onto $F$ parallelly to $\chi$ and $T^li'$ is the $l$-tangent maps of $i'$. Thus $\nabla^l i'(x)$ is a vector of $L_{sym}^l(T_x\mathcal L,F_x)$, for every $x\in L$.

As in section \ref{r=1c}, we have for any $i'\in \Gamma$:
\[\nabla^l (\rho\cdot i')(x)=\sum_{k=0}^l C_l^k T^{l-k}\rho\cdot \nabla^k i'(x).\]
Thus, $\nabla^l(S_{f'}(i'))(x)=\rho(x)\cdot \nabla^l S^0(f',i')(x)+ \Psi_m\big((\nabla^k S^0(f',i')(x))_{1\le k<l}\big)$, where $\Psi'_m$ is a continuous function which does not depend on $\big(\nabla^k S^0(f',i')(x)\big)_{k\ge l}$. 

On the other hand, by conclusion $2$  of Lemma \ref{lem6} and by Lemma \ref{Cr:contractivity}, for any $x\in V$ and $i'\in V_i^0$, $(\nabla^k S^0(f',i')(x_0))_{k=1}^r$ is equal to the image by $\phi_{f'x}$ of $(\nabla^k i'(y_0))_{k=1}^r$, with $x_0\in V$, $x= S^0(f',i')(x_0)$ and $y_0:=\pi\circ \hat f'\circ S^0(f',i')(x_0)$.

Thus $(\nabla^k S_{f'}(i')(x_0))_{k=1}^r$ is equal to the image of $\big((\nabla^k i')(y_0)\big)_{k=1}^r$ by a continuous function $\phi'_{f'x}$ of the form:
\[\phi'_{f'x}:\;(l_m)_m\in P^r_{\hat f(x)}\mapsto \Big[\rho(x)\cdot C_{m,x}^{f'}(l_m)+\Psi'_m((l_k)_{k=1}^m)\Big]_m\] 
Such a function is contracting for a good norm on $P^r$ independent on the base point. To conclude the proof, we proceed as in the previous section.
  
\begin{proof}[Proof of Lemma \ref{Cr:contractivity}]
We showed in the case $r=1$ that $\phi_{f'x}$ preserves the subset of vectors of $P^1$ with norm at most $\epsilon$, and hence preserves the subset of $P^r$ formed by maps of linear part at most $\epsilon$. Let us show the $\lambda$-contraction of $\phi_{f'x}$. 

Let $l':=\phi_{f'}(l)$. Let $J_z^rf'$ be the $r$-jet map of $f'$ at $z:= I(x)$ (see \cite{Michor}).

We remind that the $r$-jet $J_z^rf'$ of $f'$ at $z$ is a vector of 
\[\bigoplus_{j=1}^r L_{sym}^j (T_zM, T_{f'(z)}M)\] such that, if we denote by $f_j'$ its component in 
$L_{sym}^j(T_zM,T_{f'(z)}M)$, we have:

\[\exp^{-1}_{f'(z)}\circ f'\circ \exp_z(u)=\sum_{j=1}^r f'_j(u^j) + o(\| u\|^r),\; \mathrm{for} \; u\in T_z M\]

Let $J_x^r \hat f':= (T_{\hat f'(x)}I)^{-1}\circ J_z^rf'\circ T_x I$ and $J_x^r \hat f'=: \sum_{j=1}^r \hat f'_j\in  \bigoplus_{j=1}^r L_{sym}^j(T_x \mathcal F, T_{\hat f'(x)} \mathcal F)$. 

By definition of $l':= \phi_{f'x}(l)$, for any $u'\in \chi(x)$, there exists $u\in \chi(\hat f'(x))$ such that:

\begin{equation}\label{mise en valeur du jet de f}
J_x^r \hat f'(u'+l'(u'))=u+l(u)+o(\|u\|^r).\end{equation} 
  
We recall that $\pi_v$ and $\pi_h$ denote the projection of $T\mathcal F$ onto respectively $F$ and $\chi$.

By (\ref{mise en valeur du jet de f}), we have:
\[u:= \pi_h\circ J^r_x\hat f'(u'+l'(u'))+o(\|u'\|^r)\; \mathrm{and}\; l(u)=\pi_v\circ J^r_x\hat f'(u'+l'(u'))+o(\|u'\|^r).\]

Thus, we have:
\begin{equation}\label{1eq}
l\circ \pi_h\circ J^r_x\hat f'(u'+l'(u'))=\pi_v\circ J^r_x\hat f'(u'+l'(u'))+o(\|u'\|^r).\end{equation}
We have:
\begin{equation}\label{2eq} J^r_x \hat f'(u'+l'(u'))=\sum_{I\in R} \hat f'_{|I|}\Big[\prod_{k\in I} l'_k(u'^k)\Big]+o(\|u'\|^r),\end{equation}
where $R$ is the set $\cup_{k=1}^r\{0,\dots ,r\}^k$;  $l'_0(u'^0)$ is equal to $u'$;  and for $I\in R$, $|I|$ denotes the length of $I$. 

Let $f'_{kv}$ and $f'_{kh}$ be respectively the linear maps $\pi_v\circ \hat f'_k$ and $\pi_h\circ \hat f'_k$ respectively, for every $k\in \{0,\dots, r\}$.

It follows from Equations (\ref{1eq}) and (\ref{2eq}) that:
\begin{equation}\label{agd}l\Big(\sum_{I\in R} f'_{|I|h}\big[ \prod_{k\in I} l'_k(u'^k)\big]\Big)
=\sum_{I\in R} f'_{|I|v}\big[\prod_{k\in I}l_k'(u'^k)\big]+o(\|u'\|^k)\end{equation}

On the one hand, we have:

\begin{equation}\label{ad}
\sum_{I\in R} f'_{|I|v}\big[\prod_{k\in I}l_k'(u'^k)\big]=\sum_{m=1}^r\left[ \sum_{I\in R,\; \Sigma I=m}f'_{|I|v} \big[\prod_{k\in I}l'_k(u'^k)\big]\right]+o(\|u'\|^r)\end{equation}

 with, for every $I\in  R$, the integer $\Sigma I$ equal to $\sum_{j\in I} j$ plus the number of times that $0$ appears in $I$.
 
On the other, as 

\[l\Big(\sum_{I\in R} f'_{|I|h}\big[ \prod_{k\in I} l'_k(u'^k)\big]\Big)=
 \sum_{a=1}^r l_a\Big(\sum_{I\in R} f'_{|I|h}\big[ \prod_{k\in I} l'_k(u'^k)\big]\Big)^a\]
and as, 
\[\Big(\sum_{I\in R} f'_{|I|h}\big[ \prod_{k\in I} l'_k(u'^k)\big]\Big)^a=\sum_{(I_\alpha)_\alpha\in R^a} 
\prod_{\alpha=1}^a f'_{|I_\alpha|h}\big[\prod_{k\in I_\alpha} l_k'(u'^k)\big]\]
the polynomial map $l\Big(\sum_{I\in R} f'_{|I|h}\big[ \prod_{k\in I} l'_k(u'^k)\big]\Big)$ is equal to: 
\begin{equation}\label{ag}
\sum_{m=1}^r \quad 
\sum_{\{(I_\alpha)_{\alpha\in A}\in R^*,\; \sum_\alpha\Sigma I_\alpha=m\}} 
l_{|A|} \Big[\prod_{\alpha\in A} f'_{|I_\alpha|h}\big[\prod_{k\in I_\alpha} l_k'(u'^k)\big]\Big]\end{equation}
with $R^*:= \cup_{a=1}^r R^a$.

By identification, it follows from equations  (\ref{agd}), (\ref{ad}) and (\ref{ag}) and that, for every $m\in \{1,\dots, r\}$:

\[ \underbrace{\sum_{\{(I_\alpha)_{\alpha\in A}\in R^*,\; \sum_\alpha\Sigma I_\alpha=m\}} 
l_{|A|} \Big[\prod_{\alpha\in A} f'_{|I_\alpha|h}\big[\prod_{k\in I_\alpha} l_k'(u'^k)\big]\Big]}_{l'_m\; \mathrm{only\; occurs\; for}\; (I_\alpha)_\alpha=((m));\;  l_m\;  \mathrm{only\; for}\; (I_\alpha)_\alpha\in \{\{0\},\{1\}\}^m}=
\underbrace{ \sum_{I\in R,\; \Sigma I=m}f'_{|I|v} \big[\prod_{k\in I}l'_k(u'^k)\big]}_{\mathrm{Here} \;l'_m\; \mathrm{only\; occurs\; for}\; I=(m).}\]
Thus,  there exists an algebraic function $\phi$, such that $f'_{1v}\circ l'_m(u'^m)$ is equal to:
\[
l_1\circ f_{1h}'\circ l'_m(u'^m)+ \sum_{(i_\alpha)_{\alpha=1}^m\in \{0,1\}^m} l_m\Big(\prod_{\alpha=1}^m f_{1h}'\circ l_{i\alpha}'(u'^{i_\alpha})\Big)+\phi\big((l_i)_{i<m},(l_i')_{i<m},(f'_i)_{i=1}^r\big)\]

Since the linear part $l_1$ of $l$ is small, we have:
 \[l_m'=(f'_{1v}-l_1\circ f'_{1h})_{|F_{\pi(x)}}^{-1}
 \Big[  
 \sum_{I\in \{0,1\}^m} l_m\circ \prod_{k\in I} f'_{1h}\circ l_k+\phi\big((l_i)_{i<m},(l_i')_{i<m},(f_i)_{i=1}^r\big)\Big].\]
For $x\in V$ and $f'=f$, we have $f'_{1h|F_x}=0$.

Thus,  
\[\sum_{I\in \{0,1\}^m} l_m\prod_{k\in I} f'_{1h}\circ l_k=l_m\circ  \big(f'_{1h} \big)^m.\] 
It follows from the $r$-normal expansion that the map
\[C\;:\; l_m\mapsto  (f'_{1v}-l_1\circ f'_{1h})_{|F_x}^{-1} \circ l_m\circ  \big(f'_{1h}\big)^m\]
is $\lambda$-contracting, when $l_1$ is small.

Also, the map $l_s'$ is an algebraic function of only  $(l_k)_{k\le s}$ and $(f'_k)_{j\le r}$, for $s\le m$.
 Thus, 
 \[l_m'=C_m(l_m)+\phi\big((l_i)_{i<m},(f_i')_i\big)\]

This implies that for a norm on $P^r$, the map $\phi_{fx}$ is contracting, for $U^*$ small enough and then $f'$ $C^r$-close to $f$.
 
\end{proof}
\section{Proof of persistences of complex laminations by deformation}
Let us prove the persistence of $0$-normally expanded complex laminations by deformation. The persistence of $0$-normally contracted laminations is proved similarly.

Let $(F,\mathcal F,\pi,I)$ be a differentiable tubular neighborhood of the none necessarily compact lamination $(L,\mathcal L)$ immersed by $i$.

Let $L'$ be a precompact open subset of $L$ whose closure is sent into $L'$ by the pullback $f^*$. Let $B_0$ be a precompact, open subset of $B$, that we will restrict. 

Let $\hat F$ be the product $B_0\times F_{|L'}$ and $\hat \pi\; : \;  (x,t)\in B_0\times F_{|L'}\rightarrow (x,t)\in B_0\times L'$. 

Let $Z$ be the set of maps of the form:
\[i_\sigma\; :\; (t,x)\in B_0\times L'\mapsto (t,I\circ \sigma(t,x))\in B_0\times M,\]
where $\sigma$ is a $C^0$-bounded, differentiable section of $\hat \pi \; :\; \hat F\rightarrow B_0\times L'$, such that $i_\sigma$ sends any small plaque of the product lamination $B_0\times \mathcal L_{|L'}$ onto a complex submanifold of $B_0\times M$.

 We notice that $Z$ is not empty since it contains $i_0\; :\; (t,x)\mapsto (t,i(x))$. 

\begin{lemm} The subspace $Z$ equipped with the $C^0$-norm:
\[\|i_\sigma\|= \sup_{(t,x)\in B_0\times L'} \|\sigma(t,x)\|\]
is a complete space\end{lemm}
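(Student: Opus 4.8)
The plan is to show that $Z$ is a closed subset of the Banach space $\mathcal{B}$ of $C^0$-bounded, continuous sections of $\hat\pi\colon\hat F\to B_0\times L'$, equipped with the supremum norm $\|\sigma\|=\sup\|\sigma(t,x)\|$; since $\mathcal{B}$ is complete, any closed subset is complete for the induced metric, which is exactly the $C^0$-norm on $Z$ written via the identification $i_\sigma\leftrightarrow\sigma$. So the only real content is to check that a uniform limit of sections $\sigma_n$ whose associated maps $i_{\sigma_n}$ are \emph{differentiable along the plaques} and send small plaques of $B_0\times\mathcal{L}_{|L'}$ to complex submanifolds, again has these two properties.

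First I would fix a point $(t_0,x_0)$ and a small plaque $P$ through it, and work in a distinguished chart: trivialize $\hat F$ over a neighborhood so that a section $\sigma$ restricted to $P$ is a bounded map into a fixed finite-dimensional normed space (the fiber), and $i_\sigma$ restricted to $P$ is (in the ambient chart of $B_0\times M$) the graph of a map $g_\sigma\colon P_0\to\mathbb{C}^p$, where $P_0$ is an open polydisc. The hypothesis "$i_\sigma$ sends small plaques to complex submanifolds" says precisely that $g_\sigma$ is holomorphic on $P_0$ (after possibly shrinking, as in the proof of Proposition~\ref{J2}). Now if $\sigma_n\to\sigma$ uniformly, then $g_{\sigma_n}\to g_\sigma$ uniformly on $P_0$; by the Weierstrass theorem (uniform limits of holomorphic maps on a domain are holomorphic, with convergence of all derivatives on compact subsets), $g_\sigma$ is holomorphic, hence $i_\sigma$ sends $P$ to a complex submanifold, and moreover $g_\sigma$ is $C^\infty$ along the plaque so $i_\sigma$ is differentiable. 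Since $(t_0,x_0)$ and $P$ were arbitrary and these are local conditions, $i_\sigma\in Z$.

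Two small points need care. One must check that the limit section $\sigma$ is itself continuous on $B_0\times L'$ (not merely plaque-wise holomorphic): this is immediate, as a uniform limit of continuous sections is continuous, and membership in $Z$ only requires differentiability \emph{along} the leaves plus global continuity, both of which pass to uniform limits. Secondly, one should verify the transition functions and the trivializations can be chosen compatibly so that "graph of a holomorphic map" is a coordinate-independent description on overlaps — this is routine once one uses, as above, the normal form from Proposition~\ref{J2}. The main obstacle, such as it is, is purely bookkeeping: making sure the local graph description is uniform in a neighborhood of each point so that the Weierstrass convergence argument applies on a fixed polydisc independent of $n$; there is no analytic difficulty, only the verification that the class of "plaque-holomorphic" sections is $C^0$-closed, which is exactly Weierstrass's theorem applied fiberwise.
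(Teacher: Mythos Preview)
Your proposal is correct and is essentially an unpacking of the paper's one-line proof, which reads in its entirety: ``This follows from the Cauchy integral theorem.'' The Weierstrass convergence theorem you invoke (uniform limits of holomorphic maps are holomorphic) is precisely the consequence of the Cauchy integral formula the author has in mind; your argument that $Z$ is $C^0$-closed in the ambient Banach space of bounded continuous sections, via local graph representations and Weierstrass, is the standard way to spell this out.
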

\begin{proof} This follows from the Cauchy integral theorem.\end{proof}
Let $\hat S\; :\; i_\sigma\in Z\longmapsto \Big[(t,x)\mapsto \big(t,S^0(f_t,i_\sigma(t,\sigma(t,\cdot))\big)\Big]$, with $S^0$ defined by Lemma 
\ref{lem6} with $r=1$. By restricting $B_0$, such a map is well defined since this lemma only uses the $0$-normal expansion.

By conclusion \ref{preconc4} of Lemma \ref{lem6}, the image $V'$ by $\hat S(i_\sigma)$ of a small plaque of $B_0\times\mathcal L_{|L'}$ is sent by $\hat f\; :\; (t,x)\in B_0\times M\mapsto (t,f_t(x))$ into the image $V$ by $i_\sigma$ of a small plaque of $B_0\times\mathcal L_{|L'}$.  By $0$-normal expansion, the map $\hat f$ is transverse to $V$ at $V'$:
\[\forall y\in V',\; T\hat f\big(T_y (B_0\times M)\big)+T_{\hat f(y)}V=T_{\hat f(y)}(B_0\times M).\]
Thus, $V'$ is an open subset of $\hat f^{-1}(V)$, since $V$ and $V'$ have the same dimension. As $\hat f$ and $V$ are complex analytic, $V'$ is also complex analytic. Thus, $\hat S(i_\sigma)$ belongs to $Z$.
Finally, by restricting $B_0$, the map $\hat S$ is $C^0$-contracting, since subsection \ref{C0contraction} needs only the $0$-normal expansion.

Consequently, $\hat S$ has a unique fixed point $\underline{i}\; : \; (t,x)\in B_0\times L'\mapsto (t,i_t(x))$ in $Z$.
By Lemma \ref{J2},  $B_0\times M$  induces via $\underline{i}$ a unique complex laminar structure $\mathcal D$ on $D$, compatible with the $C^1$-laminar structure and  the complex structure $B_0\times \mathcal L_{|L'}$. By compatibility of the structure, the projection  $B_0\times L'\mapsto B_0$ induces a holomorphic submersion $\overline{w}\; :\; (D,\mathcal D)\rightarrow B_0$. We notice that $S(f,i)=i$ and that $\underline{i}_{|\{t_0\}\times L'}$ is $i$: by uniqueness stated in Lemma \ref{J2}, the structure $\overline{w}^{-1}(t_0)=(L',\mathcal L_{t_0})$ is equal to $(L',\mathcal L_{|L'})$.

We notice that $f_t$ preserves the immersed lamination $(L,\mathcal L_t)=\overline{w}^{-1}(t)$ by $i_t= \underline{i}_{|\overline{w}^{-1}(t)}$ since $\underline{i}$ is a fixed point of $S$ and hence since $S^0 (f_t,i_t)$ equals $i_t$.

\section{Proof of persistences of normally hyperbolic laminations}

\subsection{Existence of laminar structures on the stable and unstable sets of normally hyperbolic laminations}  
\begin{prop}\label{trans}
Under the hypotheses of Theorem \ref{Main} in the normally hyperbolic case, there exist two laminations $(L^s,\mathcal L^s)$ and $(L^u,\mathcal L^u)$ $C^r$-immersed by respectively $i^s$ and $i^u$ into $M$, such that:
\begin{itemize}
\item[-] $f$ $r$-normally expands $(L^s,\mathcal L^s)$ over a morphism $f_s^*$ from a the restriction of $\mathcal L^s$ to an open,  precompact subset $L'^s$ into $\mathcal L^s$. 
\item[-] $f$ $r$-normally contracts $(L^u,\mathcal L^u)$ over an injective, open immersion $f_u^*$ from the restriction of $\mathcal L^u$ to a precompact open subset $D^u$ to $\mathcal L^u$,
%\item[-] if $f^*$ sends $L'$ into itself, then  each immersed leaf of $\mathcal L_{|L'^s}^s$ is equal to the union of local strong stable manifolds of the points of an immersed leaf of $\mathcal L_{|L'}$,
%\item[-] if $f^{*^{-1}}$ sends $L'$ into itself, then each immersed leaf of $\mathcal L_{|L'^u}^u$ is equal to the union of local strong unstable manifolds of the points of an immersed leaf of $\mathcal L_{|L'}$,
\item[-] there exist two canonical $(C^r)$-inclusions of $(L,\mathcal L)$ into $(L^s, \mathcal L^s)$ and $(L^u,\mathcal L^u)$ such that the following diagram is well defined and commutes:
\[\begin{array}{rclll}
     L'^s     &\stackrel{f_s^*}{\rightarrow} &L^s         &                &   \\
     \uparrow&                              &\uparrow    &\stackrel{i^s}{\searrow}                   &   \\
      L'      &\stackrel{f^*}{\rightarrow}   &L           &\stackrel{i}{\rightarrow} &M  \\
   \downarrow&                              &\downarrow  &\stackrel{i^u}{\nearrow}       &   \\     
   D^u      &\stackrel{f_u^*}{\rightarrow} &L^u         &                &   \\
\end{array}\]
\item[-] for every $\epsilon>0$ small enough, all points $x\in L'$ satisfy\footnote{We remind that $\mathcal L^{\epsilon}_x$ is the union of $\mathcal L$-plaques with diameter less than $\epsilon$ which contain $x$.}   
\[i(\mathcal L_x^\epsilon)=i^s(\mathcal L_x^{s\epsilon})\;\ovfork\; i^u(\mathcal L_x^{u\epsilon}).\]
\item[-] let $x\in L'$ with $f^*$-forward orbit included in $L'$. Then a local strong stable manifold of $i(x)$ is included in the immersion by $i^s$ of a $\mathcal L^s$-plaque of $x$,
\item[-] let $x\in L'$ with $f^*$-backward orbit included in $L'$. Then a local strong unstable manifold of $i(x)$ is included in the immersion by $i^u$ of a $\mathcal L^u$-plaque  of $x$,
%\item Every $\mathcal L$-plaque embedded by $i$ is the transversal intersection of a $\mathcal L^s$-plaque embedded by $i^s$ with a $\mathcal L^u$-plaque embedded by $i^u$ .
\end{itemize} 
   
\end{prop}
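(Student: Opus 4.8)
The plan is to construct $(L^s,\mathcal L^s)$ and $(L^u,\mathcal L^u)$ by a graph transform inside a tubular neighborhood, and then to read off the six listed properties by elementary bookkeeping. First I would fix a $C^r$-tubular neighborhood $(F,\mathcal F,I,\pi)$ of $(L,\mathcal L)$ (Proposition \ref{exist:tubu}), chosen so that the fiber $F_x$ is canonically $E^s_x\oplus E^u_x\subset i^*TM_x$, and so that $M$ carries a Riemannian metric for which $T\mathcal L$, $E^s$, $E^u$ are mutually orthogonal and the normal hyperbolicity inequalities hold. Writing $\hat f:=I^{-1}\circ f\circ I$ for the endomorphism induced on a neighborhood $U^*$ of the zero section of $F$ (defined over $f^{*-1}(V)$ for a suitable precompact $V\supset cl(L')$, exactly as in Section \ref{r=1}), $\hat f$ covers $f^*$ and, in the splitting $F=E^s\oplus E^u$, contracts $E^s$ and expands $E^u$, up to higher-order terms with small derivatives near the zero section.

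For $(L^u,\mathcal L^u)$ I take $L^u$ to be a neighborhood of the zero section of $E^u\to L$ and look for an immersion $i^u=I\circ(\mathrm{id},\gamma^u)$ with $\gamma^u$ a bounded $C^r$-section of $E^s\to E^u$, $\gamma^u(0)=0$, small $1$-jet, whose graph $\{u+\gamma^u(u)\}$ is $\hat f$-invariant. This turns $\gamma^u$ into a fixed point of the forward graph transform; since $\hat f$ expands $E^u$ (so the base change is invertible, using $f^*$ bijective) and contracts $E^s$, this transform is a $\lambda$-contraction for the $C^0$-norm on the ball of small sections, as in Section \ref{C0contraction}. Its unique fixed point gives $i^u$; the $C^r$-regularity, and the $C^r$-lamination structure $\mathcal L^u$ whose plaques are the small pieces of the $i^u$-images of product plaques (transverse coordinate at $x$ being the $E^s$-displacement), come from the jet-bundle bootstrap of Section \ref{r=1c} and of the proof of Theorem \ref{Maincontract} for $r>1$, applied to the graph transform. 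The map $f_u^*$ is the pullback of $f$ to $i^u(D^u)$, where $D^u\subset L^u$ is the sub-neighborhood on which $\hat f$ does not leave $L^u$; it is an injective open immersion since $\hat f$ is injective on the graph ($f^*$ injective, $E^u$ expanded).

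The construction of $(L^s,\mathcal L^s)$ is symmetric with one subtlety: here $L^s\subset E^s\to L$, $i^s=I\circ(\mathrm{id},\gamma^s)$ with $\gamma^s$ a section of $E^u\to E^s$, and the corresponding graph transform is now \emph{expanding} for $C^0$, because $E^u$ is expanded. The remedy is to iterate its inverse: since $\hat f$ expands $E^u$ and $f^*$ is bijective (so the base preimage is unique), the inverse graph transform is a well-defined $\lambda$-contraction on the ball of small sections, although $f$ itself need not be invertible; alternatively one passes to the preorbit space of Proposition \ref{preorbit-space} to make $f$ invertible and pushes the construction down. Its fixed point yields the forward-$\hat f$-invariant graph, hence $i^s$, and $f_s^*$ is the induced pullback of $f$ on $i^s(L'^s)$ — a morphism, not necessarily injective, since the contraction of $E^s$ does not force $\hat f$ to be injective along the $E^s$-directions. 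The lamination structure $\mathcal L^s$ and the $C^r$-regularity are obtained exactly as for $\mathcal L^u$. Because $T\mathcal L$, $E^s$, $E^u$ are mutually orthogonal and $Tf$ preserves the first two blocks, one has $\sup_{v\in T\mathcal L\oplus E^s,\,|v|=1}\|Tf(v)\|\le\max\!\big(\sup_{v\in T\mathcal L,\,|v|=1}\|Tf(v)\|,\lambda\big)$, so the $r$-normal hyperbolicity of $\mathcal L$ gives the $r$-normal expansion of $\mathcal L^s$ with the same rate on a neighborhood of the zero section; symmetrically $f$ $r$-normally contracts $\mathcal L^u$.

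It remains to check the geometric assertions. The canonical inclusions $L\hookrightarrow L^s$, $L\hookrightarrow L^u$ are the zero sections, and the diagram commutes because $\hat f$ covers $f^*$ and the graphs are invariant. Along $i(L)$ one has $Ti^s(T\mathcal L^s)=T\mathcal L\oplus E^s$ and $Ti^u(T\mathcal L^u)=T\mathcal L\oplus E^u$, which meet transversally with intersection $T\mathcal L$; hence near $i(x)$ the plaques $i^s(\mathcal L_x^{s\epsilon})$ and $i^u(\mathcal L_x^{u\epsilon})$ intersect transversally, and since $(d+\dim E^s)+(d+\dim E^u)-\dim M=d$ and both contain $i(\mathcal L_x^\epsilon)$, we get $i(\mathcal L_x^\epsilon)=i^s(\mathcal L_x^{s\epsilon})\ovfork i^u(\mathcal L_x^{u\epsilon})$. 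Finally, for $x$ with forward $f^*$-orbit in $L'$, a point $z$ of a local strong stable manifold of $i(x)$ has forward orbit staying near $i(L)$, hence lies in $i^s(L^s)$ by the characterization of the center-stable set as the set of points with bounded forward orbit; being connected and containing $i(x)$, $W^{ss}_{\mathrm{loc}}(i(x))$ lies in the $\mathcal L^s$-plaque of $x$, and the strong unstable statement is the mirror argument using backward orbits and $f_u^*$. I expect the main obstacle to be twofold: (i) turning the $(L^s,\mathcal L^s)$ graph transform into a genuine contraction without invertibility of $f$, handled by inverting the expanding transform via bijectivity of $f^*$ (or the preorbit-space detour); and (ii) transporting the $C^0$-fixed-point construction to the $C^r$ category together with its lamination structure, i.e. the jet-bundle bootstrap, which is technical but parallels the arguments already carried out in Sections \ref{r=1c}--\ref{Proof of theorem} and their $r>1$ counterpart.
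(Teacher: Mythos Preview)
Your strategy is essentially the paper's: build $(L^u,\mathcal L^u)$ by a forward graph transform and $(L^s,\mathcal L^s)$ by a backward one, then read off the listed properties. Two points deserve comment.

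First, a technical gap: you take the tubular fiber to be $E^s_x\oplus E^u_x$ and later assert $Ti^s(T\mathcal L^s)_{|L}=T\mathcal L\oplus E^s$, $Ti^u(T\mathcal L^u)_{|L}=T\mathcal L\oplus E^u$. But $E^s$ and $E^u$ are in general only continuous, so they cannot serve directly as $C^r$ subbundles of a $C^r$ tubular neighborhood, nor as the leafwise tangent directions of a $C^r$ lamination. The paper handles this by first replacing $E^u$ with a $C^r$-close $C^r$ lifting $E'^u$ (and similarly for $E^s$), then invoking Lemma~\ref{L-fibre} to put a genuine $C^r$-lamination structure on the total space $L^u$ of $E'^u\to L$, with an initial $C^r$-immersion $j_0$ via the exponential map. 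The graph transform is then run on immersions of this fixed lamination $(L^u,\mathcal L^u)$, reusing verbatim the machinery of the proof of Theorem~\ref{Maincontract}. Consequently one only gets $Ti^u(T\mathcal L^u_{|L})$ \emph{close to} $Ti(T\mathcal L)\oplus E^u$, not equal; this is still enough for the transversality conclusion, but your equality claim should be weakened.

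Second, your description of the $(L^s,\mathcal L^s)$ step as ``iterating the inverse'' of an expanding graph transform is misleading. One does not invert anything (and cannot, since $f$ is only an endomorphism). What works---and what the paper does by invoking the proof of Theorem~\ref{Main} in the normally expanded case---is the transform that assigns to a candidate $i'$ the section whose image at $x$ is the unique point of $F_x^\eta$ sent by $f'$ into $i'(\mathcal L^{s\eta}_{f^*(x)})$. This is a forward, well-defined operation which is $C^0$-contracting precisely because $E^u$ is expanded; no preorbit-space detour is needed. Apart from these two clarifications, your outline matches the paper's argument, including the verification of the commuting diagram, the transverse-intersection identity, and the strong stable/unstable inclusions via the graph-transform characterization.
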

\begin{proof}[Proof of Proposition \ref{trans}]

We remind that $r$ is positive. 

\noindent \underline{ Definition of $(L^u,\mathcal L^u)$}\\ 
Since $f^*$ is bijective, we can suppose $Tf$-invariance of the strong unstable direction  $E^u$.

we notice that the section of the Grassmannian $x\in L\mapsto E^u_x$ is continuous but not necessarily differentiable. However by Appendix A.2.1 of \cite{PB1}, there exists a $C^r$-lifting $E'^u$ of $i$ into the Grassmannian of $TM$ of class $C^r$ which is $C^{0}$-close to the section $E^u$.  By Lemma \ref{L-fibre} (see below) the section $E'^u$ defines a  structure of $C^r$-lamination $\mathcal L^u$ on the vector bundle $\pi^u\; :\; L^u\rightarrow L$ whose fiber at $x\in L$ is $E^u_x$ and such that:  
\begin{itemize}
\item[-] the leaves of $\mathcal L^u$ are the preimages by $\pi$ of the leaves of $(L,\mathcal L)$,
\item[-] $(L,\mathcal L)$ is canonically $C^r$-embedded into $(L^u,\mathcal L^{u})$ as the $0$-section of this bundle,
\item[-] for a small, smooth, positive function $\delta$ on $(L^u,\mathcal L^u)$, the map:
\[j_0\; :\; (x,u)\in L^u\mapsto \exp_x\left(\frac{\delta(x)\cdot u}{\sqrt{1+u^2}}\right)\]
is a $C^r$-immersion of $\mathcal L^u$ into $M$, with $\exp$ the exponential map associated to a Riemmanian metric of $M$..\end{itemize}
Let $(F,\mathcal F,I,\pi)$ be a tubular neighborhood of the immersed lamination $(L^u,\mathcal L^u)$ by $j_0$.
Let $V$ be a small neighborhood of $f^*(cl(L'))$ in $L^u$.
Let $\eta>0$ be small enough such that for every $x\in V$, the family of submanifolds $(F_y^\eta)_{y\in \mathcal L^{u \eta}_x}$ are the leaves of a $C^r$-foliation, with $F_y^\eta$ the image by $I$ of the ball of radius $\eta$ and centered at $0$ in $F_y$. 

 Let $\Gamma$ be the subset of the $C^r$-immersions $j$ of $(L^u,\mathcal L^u)$ such that:
\begin{itemize}
\item[-] the restriction of $j$ to $L$ is equal to $i$,
\item[-] the restriction of $j$ to the complement of $V$ is equal to $j_0$,
\item[-] the image by $j$ of $y\in L^u$ belongs to $F_y^\eta$,
%\item[-] the $r$-first derivative of $j_{|\pi^{u^{-1}}(L')}$ have norms less than $\epsilon$, with a norm as defined in section ... by tacking $(L^u,\mathcal L^u)$ instead of $(L,\mathcal L)$.
\item[-] the immersion $j$ is $C^r$-close to $j_0$ in the $C^r$-topology: $\Gamma$ is a small neighborhood of $j_0$.
\end{itemize}

By normal hyperbolicity, for $E^{u'}$ close enough to $E^u$ and also $\Gamma$, and $\delta$ small enough, for every $x\in V$, for every $j\in \Gamma$, $f$ sends $j\circ \pi^{u-1}(\mathcal L_{f^{*^{-1}}\circ \pi^u(x)}^\eta)$ to a submanifold which intersects transversally at a unique point $S^0(j)(x)$ the submanifold $F_x^\eta$.
We notice that $S^0(j)$ is plaquewise a $C^r$-immersion since it is the composition of the immersion $j_0$ of a $\mathcal L^u-$plaque of $x$  with the holonomy along a $C^r$-foliation between two transverse $C^r$-sections. As these foliations and transverse sections depend continuously on $x\in V$, the map $S^0(j)$ is a $C^r$-immersion of $(V,\mathcal L^u_{|V})$ into $M$.

 Let us fix a $C^r$-morphism $\rho$ from $(L^u,\mathcal L^u)$ into $[0,1]$ with support in $V$ and equal to $1$ on a neighborhood $V'$ of $f^*(cl(L'))$.
Let $S(j)$ be equal to $j_0$ on $V^c$ and to $\rho\cdot S^0(j)$ on $V$, where the scalar product use the vector space structure of the tubular neighborhood $(F,\mathcal F,I,\pi)$. Such a map $S(j)$ is a $C^r$-immersion of $(L^u,\mathcal L^u)$ into $M$.

The existence of a fixed point $i^u$ of $S$ is proved similarly as in the proof of the persistence of normally contracted laminations, except that:
\begin{itemize} \item[-] we do not need to deal with the $C^0$-topology, since $i$ is equal to the restriction to $L$ of any immersion of the image of $S$,
\item[-] we need possibly to  take $E'^u$ closer to  $E^u$ and $V$ smaller, in order to use the contraction of some map $\phi_f$ defined on the neighborhood of $E^u\oplus T\mathcal L$ in the Grassmannian of $T\mathcal F$. \end{itemize}

Let $D^u$ be an open neighborhood of $L'$ in $L^u$, small enough in order that every point $x\in D^u$ have its image by $i^u$ sent by $f$ into the image by $i^u$ of a small plaque containing $f^{*}_u\circ \pi(x)$ and included in $V'$. 

Let $f^*_u(x)$ be the point of this plaque such that $i^u\circ f^*_u(x)=f\circ i^u(x)$.

Since the restriction of $Tf $ to $Ti(T\mathcal L)\oplus E^u$ is open and injective, for $E'^u$ close enough to $E^u$, and also $V$ small enough, $f^*_u$ is an open and injective immersion of $(D^u,\mathcal L^u_{|D^u})$ into $(L^u,\mathcal L^u)$. 

From the construction by graph transform of $i^u$, one easily shows that any points $x\in \cap_{n\ge 0} f^{* -n}(L')$ is sent by $i$ to a point with local strong unstable manifold included in the image by $i^u$  of a plaque of $\mathcal L^u$.

The existence of $i^s$ and $(L^s,\mathcal L^s)$ is proved  similarly by using this time the proof of Theorem \ref{Main} in the normally expanded case.   

Also since $\Gamma$ is a small neighborhood of $j_0$, the tangent space $Ti^u (\mathcal L^u_{|L})$ is close to $Ti(T\mathcal L)\oplus E^u$. Similarly, the tangent space $Ti^s (T\mathcal L^s_{|L})$ is close to $Ti(T\mathcal L)\oplus E^s$.   
 Therefore $i^u$ and $i^s$ are transverse at $L'$ and so the last conclusion of the proposition is proved.

\subsection{Differentiable persistence of normally hyperbolic  laminations} 
\begin{proof}[Proof of Theorem \ref{Main} when the lamination is normally hyperbolic]
We apply Theorem \ref{Main} in the normally expanded case to $(i^s,f^*_s,f,L^s)$ and Theorem \ref{Maincontract} to $(i^u,f^*_u,f,D^u)$ with some precompact, open neighborhoods $L'^s \subset L^s $ and $L'^u\subset D^u$ of $L'$. These theorems give a neighborhood $V_f$ of $f\in End^r(M)$ formed by  endomorphisms $f'$ for which there exist 
$i^s(f')\in Im^r(\mathcal L^s,M)$, $i^u(f')\in Im^r(\mathcal L^u,M)$, $f'^*_s\in Mor_{f^*_s}(\mathcal L^s_{|L'^s},\mathcal L^s)$ and $f'^*_u\in Im^r_{f^*_u}(\mathcal L^u_{|D^u}, \mathcal L^u)$ $C^r$-close to respectively $i^s$, $i^u$, $f^*_s$, $f^*_u$ such that the following diagrams commute:
\[\begin{array}{rcccl}
&&f'&&\\ 
&M&\rightarrow&M&\\
i^s(f')&\uparrow&&\uparrow&i^s(f')\\
&L'^s&\rightarrow & L^s&\\
&&f'^*_s&&\end{array}
\qquad
\begin{array}{rcccl}
&&f'&&\\
&M&\rightarrow&M&\\
i^u(f')&\uparrow&&\uparrow&i^u(f')\\
&L'^u&\rightarrow & L^u&\\
&&f'^*_u&&\end{array}.\]
Moreover $i^u(f')$ and $i^s(f')$ are equal to respectively $i^u$ and $i^s$ on the complement of compact subsets $V^s$ and $V^u$ independent of $f'\in V_f$.

For $\epsilon>0$ sufficiently small and then $V_f$ small enough, for every $x\in L\cap (V^s\cup V^u)$ and $f'\in V_f$, the intersection of $i^s(f')(\mathcal L_x^{s\epsilon})$ with $i^u(f')(\mathcal L_x^{u\epsilon})$ is transverse.
\[\mathrm{Let}\quad \mathcal L_x^{f'\epsilon}=i^s(f')(\mathcal L_x^{s\epsilon})\;\ovfork\; i^u(f')(\mathcal L_x^{u\epsilon}).\]
 We want that $i(f')$ sends $x\in L\cap (V^s\cup V^u)$ into this intersection. In order to obtain the smoothness of $i(f')$, we use a tubular neighborhood  $(F,\mathcal F,I,\pi)$  of the immersed lamination $(L,\mathcal L)$.

Thus, for $\epsilon$ small and then $V_f$ small enough, for every $y\in \mathcal L_x^\epsilon$,  the submanifold $ F_{y}^{\epsilon}:= I\big(B_{F_x}(0,\epsilon)\big)$ intersects transversally at a unique point $i(f')(x)$ the submanifold $\mathcal L_x^{f'\epsilon}$. 
  
In other words, $i(f')_{|\mathcal L_{x}^{\epsilon/2}}$ is the composition of $i_{|\mathcal L_x^{\epsilon/2}}$ with the holonomy along the $C^r$-foliation formed by the leaves $(F_{y}^{\epsilon})_{y\in \mathcal L_{x}^{\epsilon}}$, from $i(\mathcal L_{x}^{\epsilon})$ to the transverse section $\mathcal L_x^{f'\epsilon}$.
 
 Thus, the map $i(f')$ is of class $C^r$ along the $\mathcal L$-plaques.
 As these  manifolds depend $C^r$-continuously on $x\in L$, the map $i(f')_{|L\cap(V^s\cup V^u)}$ is a
 $\mathcal L$-morphism into $M$. For $x\in L\setminus (V^s\cap V^u)$ we put $i(f')(x)=i(x)$.
 
 We notice that $i(f')$ is $C^r$-close to $i$ when $f'$ is $C^r$-close to $f$, since $i^s(f')$ and $i^u(f')$ are $C^r$-close to respectively $i^s$ and $i^u$. As $i(f')$ and $i$ restricted to $L\setminus (V^s\cup V^u)$ are equal,
 by precompactness of $L\cap (V^s\cup V^u)$, for $V_f$ sufficiently small, the morphism $i(f')$ is an immersion, for every $f'\in V_f$.

 Let us construct $f'^*$ such that the diagram of Theorem \ref{Main} commutes. 
 For all $\delta>0$ and then $V_f$ small enough, for every $f'\in V_f$, 
 $i(f')$ sends every $x\in L'$ into the intersection of $i^s(f')(\mathcal L_x^{s\delta})$ with $i^u(f')(\mathcal L_x^{u\delta})$. Thus, for $V_f$ sufficiently small, by Theorems \ref{Main} and \ref{Maincontract}, every $f'\in V_f$  sends the point $i(f')(x)$ into the intersection $\mathcal L^{f'\epsilon}_{f^*(x)}$ of $i^s(f')(\mathcal L_{f^*(x)}^{s\epsilon})$ with $i^u(f')(\mathcal L_{f^*(x)}^{u\epsilon})$. Let $f'^*(x)$ be the point of $\mathcal L_{f^*(x)}^\epsilon$ such that $f'\circ i(f')(x)$ belongs to $ F_{f'^*(x)}^\epsilon$. We notice that the diagram of Theorem \ref{Main} commutes well.
 
 To see the regularity of $f'\mapsto f'^*$, we proceed as before: $f'^*$ is locally the composition of $i(f')$ with $f'$ with the holonomy along a $C^r$-foliation between two transverse sections, all of them depending continuously on the point. Thus, $f'^*$ is well a $C^r$-endomorphism of $\mathcal L$, equivalent to $f^*_{|L'}$. As $i(f')$ is close to $i$, the morphism   
$f'^*$ is close to $f^*_{|L'}$.

 \end{proof}
\subsection{Proof of persistence of normally hyperbolic complex laminations by deformations (Theorem \ref{defo-hyp})}
Let $(L^u,\mathcal L^u)$ be the lamination $C^1$-immersed by $i^u$ provided by Proposition \ref{trans}, with $r=1$ and $L=L'$. 
By Lemma \ref{J2}, the complex manifold $M$ induces on $\mathcal L'^u:=\mathcal L^u_{L'^u}$ a complex analytic structure such that $i^u$ is complex analytic, as soon as $Ti^u(T\mathcal L'^u)$ is $J$-invariant, with $J$ the automorphism of $TM$ provided by its complex structure.

To show the $J$-invariance of $Ti(T\mathcal L'^u)$, we endow $i^{u*}TM\rightarrow L^u$ with a cone field $\chi$  over the zero section of $i^{u*}( TM)$ such that:
\begin{itemize}
\item[-] $\chi$ contains $Ti^u(T_x\mathcal L'^u)$ canonically embedded into $i^{u*}TM_x$, for all $x\in L'^u$,
\item[-] $Ti^u(T_x\mathcal L'^u)$ is a maximal vector subspace of $i^{u*}TM_x$ included in $\chi$, for all $x\in L'^u$,
\item[-] the angle of $\chi$ is small.
\end{itemize}
Let $x\in  L'^u$. We want to prove that   $Ti^u(T_x\mathcal L'^u)$ is $J$-invariant. For this end, since $L$ is compact, we may suppose that $L'^u$ is preserved by $f^{*-1}$ and so 
we notice that
$Ti^u(T_x\mathcal L^u)$ is equal to the intersection:
\[\bigcap_{n\ge 0} Tf^n(\chi_{f^{*-n}(x)}).\]
Such an intersection is equal to 
\[\bigcap_{n\ge 0} Tf^n\big(Ti^u(T_{f^{*-n}(x)}\mathcal L'^u)\big).\]
As $Ti^u(T_y\mathcal L^u)$ is $J$ ,invariant for $y\in L$, for $n\ge 0$ sufficiently large $J\circ Ti^u(T_{f^{*-n}(x)}\mathcal L'^u)$ is contained in $\chi$. Thus 
\[\bigcap_{n\ge 0} Tf^n\big(J\circ  Ti^u(T_{f^{*-n}(x)}\mathcal L'^u)\big)\]
is included in $Ti^u(T_x\mathcal L'^u)$. As $Tf$ commutes with $J$, we have:
\[Ti^u(T_x\mathcal L'^u)\supset \cap_{n\ge 0} Tf^n\big(J\circ  Ti^u(T_{f^{*-n}(x)}\mathcal L^u)\big)=J\circ Ti^u(T_x\mathcal L'^u).\]
Thus, $Ti^u(T_x\mathcal L^u)$ is $J$-invariant.

To endow $(L_s,\mathcal L_s)$ with a complex analytic structure such that $i^s$ is holomorphic, we proceed similarly (actually it is even simpler). 
We finally remark that the inclusions:
\[i^s\; : \; (L,\mathcal L)\hookrightarrow (L_s,\mathcal L_s)\quad \mathrm{and}\quad 
i^u\; : \; (L,\mathcal L)\hookrightarrow (L^u,\mathcal L^u)\]
 are complex analytic by uniqueness of the complex structure in Lemma \ref{J2}.

\end{proof}

\begin{lemm}\label{L-fibre}
Let $r\ge 1$. Let $(L,\mathcal L)$ be a $d$-dimensional lamination $C^r$-immersed by $i$ into a Riemannian manifold $(M,g)$. Let $N$ be a $C^r$-lifting of $i$ into the Grassmannian of $k$-planes of $TM$. If for any $x\in L$, $N(x)$ is in direct sum with $Ti(T_x\mathcal L)$ then there exists a structure  of lamination $\mathcal F$ on the vector bundle $\pi\; :\; F\rightarrow L$ whose fiber at $x\in L$ is $N(x)$, such that:
\begin{itemize}\item[-] $\pi$ is a $C^r$-submersion,
\item[-] $I\; :\; (x,u)\in F\mapsto \exp_x(u)$ is a $C^r$-immersion on a neighborhood of the $0$-section of $F$,
\item[-] $(F,\mathcal F)$ is of dimension $d+k$.\end{itemize}
\end{lemm}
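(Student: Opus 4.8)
The plan is as follows. The space $F$ is already given as a topological space — it is the total space of the vector bundle $x\mapsto N(x)\subset T_{i(x)}M$, and it is locally compact and second countable because $L$ is. What has to be produced is a $C^r$-laminar atlas on $F$, of dimension $d+k$, for which $\pi$ is a submersion, together with the verification that $I$ is an immersion near the zero section. I would build the atlas by trivializing the sub-bundle $N$ over distinguished open sets of $\mathcal L$ by means of local $C^r$-frames, and then check the two regularity statements by a direct computation in these charts, the immersion statement relying on the direct-sum hypothesis on $N$.

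The heart of the construction is the choice of frames. Fix $x_0\in L$, a chart $\kappa$ of $M$ around $i(x_0)$, and a distinguished open set $U$ of $\mathcal L$ with $i(U)$ inside the domain of $\kappa$; via $T\kappa$ I identify $TM$ over that domain with a product, so that $N$ becomes a map $x\in U\mapsto N(x)\in\mathrm{Gr}(k,\mathbb K^n)$ which, by the hypothesis on $N$, is $C^r$ along the plaques of $\mathcal L_{|U}$ and continuous in the transverse direction. Choosing a subspace $W$ complementary to $N(x_0)$ and shrinking $U$ so that every $N(x)$ stays transverse to $W$, the plane $N(x)$ is the graph of a linear map $A(x)\in L(N(x_0),W)$; since the graph chart of the Grassmannian is analytic, $A$ inherits the regularity of $N$ (plaquewise $C^r$, transversally continuous), and a fixed basis $v_1,\dots,v_k$ of $N(x_0)$ then yields a frame $e_j(x)=v_j+A(x)v_j$ of $N(x)$ with the same regularity. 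Writing $h=(h_1,h_2)\colon U\to V\times T$ for the $\mathcal L$-chart, I define $h'\colon\pi^{-1}(U)\to(V\times\mathbb K^k)\times T$ sending $\sum_j u_je_j(x)$ to $\big((h_1(x),(u_j)_j),h_2(x)\big)$, a homeomorphism onto the product $(V\times\mathbb K^k)\times T$. Two such charts have a coordinate change whose $\mathbb K^d$-part is the old $\phi_{ij}$, whose $\mathbb K^k$-part is the linear change of frame (governed by the transition matrix between $(e_j)$ and $(e'_j)$, which is again plaquewise $C^r$ and transversally continuous), and whose transverse part is $\psi_{ij}$ composed with the projection onto $V$, hence locally constant in the $\mathbb K^{d+k}$-variable. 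So $(F,\mathcal F)$ is a $C^r$-lamination of dimension $d+k$, and in the charts $h,h'$ the map $\pi$ reads $((v,u),t)\mapsto(v,t)$, so it is a $C^r$-submersion of laminations.

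It remains to treat $I\colon(x,u)\mapsto\exp_{i(x)}(u)$. It is defined and continuous on a neighborhood of the zero section because $F$ sits continuously inside $i^*TM$ and the Riemannian exponential is defined near the zero section of $TM$, and $I\circ 0_F=i$. Reading $I$ in the charts $h'$ and $\kappa$, its differential at a point $(x,0)$ of the zero section splits into a $V$-part, equal to the leafwise differential $Ti$ of the immersion $i$, hence injective with image $Ti(T_x\mathcal L)$, and a $\mathbb K^k$-part, equal to $(u_j)_j\mapsto\sum_j u_je_j(x)$ because the differential of $\exp_{i(x)}$ at the origin is the identity, hence injective with image $N(x)$. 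Since $N(x)$ and $Ti(T_x\mathcal L)$ are in direct sum, the total differential is injective at every point of the zero section; as the locus where this differential is injective is open in the domain of $I$ and contains $0_F$, restricting $\mathcal F$ to it gives the required neighborhood of the zero section on which $I$ is a $C^r$-immersion.

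The step I expect to be the only genuine obstacle is the first one: arranging the frame $e_j(\cdot)$ of $N$ to be simultaneously $C^r$ along the plaques and continuous transversally, so that the charts $h'$ actually patch into a laminar atlas. This is precisely what the graph coordinates on the Grassmannian provide, given that $N$ is assumed to be a $C^r$ laminar lifting. Everything else — well-definedness of $I$, the two differential computations, and the submersion property — is routine bookkeeping once the charts are in place.
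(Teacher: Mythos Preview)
Your proof is correct and follows essentially the same route as the paper's: both trivialize $N$ over a distinguished open set by using the graph chart of the Grassmannian around $N(x_0)$, and then build the laminar chart on $\pi^{-1}(U)$ from this trivialization. The only cosmetic difference is that the paper phrases the trivialization via a fixed linear projection $p$ from $T_{i(x)}M$ onto $N(x_0)\cong\mathbb K^k$ (the inverse of your graph map), whereas you phrase it via the frame $e_j(x)=v_j+A(x)v_j$; these are the same data. Your write-up is in fact slightly more complete than the paper's, since you explicitly verify that $\pi$ reads as a projection in the charts and that $T_{(x,0)}I$ is injective by the direct-sum hypothesis, points the paper leaves implicit.
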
  

\begin{proof}
For $x\in L$, let $\phi\in \mathcal L$ be a chart of a neighborhood $U$ of $x$. We may suppose that $\phi$ can be written in the form:
\[\phi\;: \; U\rightarrow \mathbb R^d\times T,\]
where $d$ is the dimension of the leaves of $\mathcal L$ and $T$ is a locally compact metric space. 

We may also suppose $U$ small enough to be sent by $i$ into a distinguish open subset of $M$ that we can identify to $\mathbb R^n$. Therefore, the restriction of the tangent space of $M$ to this distinguish open is identified to $\mathbb R^n\times \mathbb R^n$. Finally, in this identification, we suppose $F_x$ identified to $\{0\}\times \mathbb R^{k}\times \{0\}$. Let $p$ be the canonical projection of $\mathbb R^n\times \mathbb R^n$ onto $\{0\}\times (\mathbb R^{k}\times \{0\})$. For $U$ small enough, $p$ is a linear bijection from $F_y$ onto $\mathbb R^{k}$, for every $y\in U$.

We can now define the chart:
\[\phi_{U,x}\; :\; \pi^{-1}(U)\rightarrow \mathbb R^d\times \mathbb R^{k} \times T\]
\[(y,v)\mapsto (\phi_1(y),   p(v)           ,\phi_2(y))\]
where $\phi_1$ and $\phi_2$ are the first and the second coordinates of $\phi$.

As we already saw, we can identify the restriction of the Grassmannian of $TM$ at a neighborhood of $F_x$ to the product of $\mathbb R^n$ with the space $L(\mathbb R^{k}, \mathbb R^{n-k})$ of linear maps from $\mathbb R^{k}$ to $\mathbb R^{n-k}$. In such  identifications, for $U$ sufficiently small, the restriction of $F$ to $U$ is a $C^r$-morphism from $U$ into $\mathbb R^n\times L(\mathbb R^{k}, \mathbb R^{n-k})$, which is of the form $F(y)=(i(y),l_y)$.

For another such a chart $\phi_{U',x'}=(\phi_1',p', \phi_2')$, the changing coordinate:
 
\[\phi_{U',x'}\circ \phi_{U,x}^{-1}\; :\; \mathbb R^d\times \mathbb R^{k} \times T\rightarrow \mathbb R^d\times \mathbb R^{k} \times T'\]
\[(u,w,t)\mapsto (\phi_1'(y),   p'\circ l_y(w)           ,\phi_2'(y))\]
with $y:=\phi^{-1}(u,t)$, satisfies the requested properties in order that $(\phi_{U,x})_{x\in L}$ defines a $C^r$-lamination structure $\mathcal F$ on $F$.
\end{proof}

\bibliographystyle{alpha}
\nocite{*}
\bibliography{references}

\end{document}